\def\polhk#1{\setbox0=\hbox{#1}{\ooalign{\hidewidth
    \lower1.5ex\hbox{`}\hidewidth\crcr\unhbox0}}}
\newtheorem{theorem}{Theorem}[section]
\newtheorem{proposition}[theorem]{Proposition}
\newtheorem{corollary}[theorem]{Corollary}
\newtheorem{lemma}[theorem]{Lemma}
\theoremstyle{remark}
\theoremstyle{definition}
\newcommand{\parder}[3][Default]{
	\frac{\partial \ifthenelse{\equal{#1}{Default}}{}{^{#1}}#2}{
              \partial #3 \ifthenelse{\equal{#1}{Default}}{}{^{#1}}}}
\newcommand{\jac}{{\mathcal J}}
\newcommand{\hess}{{\mathcal H}}
\newcommand{\grad}{\nabla}
\newcommand{\GL}{\operatorname{GL}}
\newcommand{\imp}{{\mathversion{bold}$\Rightarrow$} }
\newcommand{\C}{{\mathbb C}}
\newcommand{\N}{{\mathbb N}}
\newcommand{\tp}{^{\mathrm t}}
\newcommand{\I}{{\mathrm i}}
\newcommand{\trdeg}{\operatorname{trdeg}}
\newcommand{\chr}{\operatorname{chr}}
\newcommand{\nolisttopbreak}{\vspace{\topsep}\nobreak\@afterheading}
\newenvironment{listproof}[1][\proofname]{\begin{proof}[#1]\mbox{}\nolisttopbreak}{\end{proof}}
\title{Irreducibility properties of Keller maps}
\author{
Michiel de Bondt\footnote{Supported by the Netherlands
                          Organisation for Scientific Research (NWO).} \\
Department of Mathematics, Radboud University \\
Nijmegen, The Netherlands \\
\emph{E-mail:} M.deBondt@math.ru.nl
\and
Dan Yan\footnote{Supported by the National Natural Science Foundation of China 
                 (Grant No.11371343)} \\
Department of Mathematics and Computer Science,\\
Hunan Normal University, Changsha 410006, China \\
\emph{E-mail:} yan-dan-hi@163.com
}
\begin{document}

\maketitle

\begin{abstract}
\noindent
J{\polhk{e}}drzejewicz showed that a polynomial map over a field of characteristic zero is invertible,
if and only if the corresponding endomorphism maps irreducible polynomials to irreducible
polynomials. Furthermore, he showed that a polynomial map over a field of characteristic
zero is a Keller map, if and only if the corresponding endomorphism maps irreducible polynomials
to square-free polynomials. We show that the latter endomorphism maps other square-free polynomials
to square-free polynomials as well.

In connection with the above classification of invertible polynomial maps and the Jacobian Conjecture,
we study irreducible properties of several types of Keller maps, to each of which the Jacobian
Conjecture can be reduced. Herewith, we generalize the result of Bakalarski, that the components of 
cubic homogeneous Keller maps with a symmetric Jacobian matrix (over $\C$ and hence any field of 
characteristic zero) are irreducible.

Furthermore, we show that the Jacobian Conjecture can even be reduced to any of
these types with the extra condition that each affinely linear combination of the components of the
polynomial map is irreducible. This is somewhat similar to reducing the planar Jacobian Conjecture to the
so-called (planar) weak Jacobian Conjecture by Kaliman.
\end{abstract}

\paragraph{Keywords.} Jacobian Conjecture, Keller map, irreducible, square-free,
weak Jacobian Conjecture. \\[-20pt]

\paragraph{MSC 2010.} 14R15; 14R10; 12D05.
\bigskip

\section{Introduction}

Throughout this paper, we will write $x$ for the $n$ indeterminates $x_1, x_2, \ldots, x_n$,
where $n \in \N$. In a similar manner, we will write $y$ for $y_1, y_2, \ldots, y_n$ and
$z$ for $z_1, z_2, \ldots, z_n$. $K$ always denotes a field of characteristic zero and $\bar{K}$
is the algebraic closure of $K$. Let $F = (F_1, F_2, \ldots, F_m) \in K[x]^m$.
Then $F$ corresponds to the polynomial map $K^n \ni v \mapsto F(v) \in K^m$.
Write $\jac F$ for the Jacobian of $F$ with respect to $x$, i.e.\@
$$
\jac F := \jac_x F := \left( \begin{array}{cccc}
\parder{}{x_1} F_1 & \parder{}{x_2} F_1 & \cdots & \parder{}{x_n} F_1 \\
\parder{}{x_1} F_2 & \parder{}{x_2} F_2 & \cdots & \parder{}{x_n} F_2 \\
\vdots & \vdots & & \vdots \\
\parder{}{x_1} F_m & \parder{}{x_2} F_m & \cdots & \parder{}{x_n} F_m
\end{array} \right).
$$
Let $M\tp$ denote the transpose of a matrix $M$.
For a single polynomial $f \in K[x]$, write $\grad f$ for the gradient of $f$ with respect to $x$,
i.e.\@
$$
\grad f := \grad_x f := (\jac_x f)\tp = \left( \begin{array}{c} \parder{}{x_1} f \\ \parder{}{x_2} f \\
                                             \vdots \\ \parder{}{x_n} f \end{array} \right).
$$
Additionally, write $\hess f$ for the Hessian of $f$ with respect to $x$, i.e.\@
$$
\hess f := \jac_x \big(\grad_x f\big) = \left( \begin{array}{cccc}
\parder{}{x_1}\parder{}{x_1} f & \parder{}{x_2}\parder{}{x_1} f & \cdots & \parder{}{x_n}\parder{}{x_1} f \\
\parder{}{x_1}\parder{}{x_2} f & \parder{}{x_2}\parder{}{x_2} f & \cdots & \parder{}{x_n}\parder{}{x_2} f \\
\vdots & \vdots & \ddots & \vdots \\
\parder{}{x_1}\parder{}{x_n} f & \parder{}{x_2}\parder{}{x_n} f & \cdots & \parder{}{x_n}\parder{}{x_n} f
\end{array} \right)
$$

We say that a polynomial map $F$ is \emph{invertible} if $F$ has a polynomial inverse.
So an invertible polynomial map is bijective. The converse holds if $K = \bar{K}$
(see \cite[Th.\@ 4.2.1]{MR1790619}), but not in general (see \cite[(1.1.36)]{MR1790619}).
The well-known Jacobian Conjecture (JC for short), raised by O.H. Keller in 1939 in \cite{MR1550818},
states that a polynomial map $F: K^n \rightarrow K^n$ is invertible if its Jacobian determinant
$\det \jac F$ is a nonzero constant. From \cite[Th.\@ 4.2.1]{MR1790619} for $K = \C$ and
\cite[Prop.\@ 1.1.12]{MR1790619}, one deduces that it suffices to prove that 
$F$ is injective in the definition of JC.

This conjecture has been attacked by many people from various
research fields and remains open even for $n=2$! (Of course, a positive answer is
obvious for $n=1$.) See \cite{MR0663785} and \cite{MR1790619} and the references therein for a wonderful
70-years history of this famous conjecture. The condition that $\det \jac F \in K^{*}$ is called the {\em
Keller condition} and polynomial maps that satisfy this condition are called {\em Keller maps}.

Among the vast interesting and valid results, one result obtained by S.S.S.\@ Wang in \cite{MR0585736}
in 1980 is that the JC holds for all polynomial maps of degree 2 in all dimensions.
Another result is the reduction to degree
3, due to H.\@ Bass, E.\@ Connell and D.\@ Wright in \cite{MR0663785} in 1982 and
A.\@ Yagzhev in \cite{MR0592226} in 1980, which asserts that the JC is true if it
holds for all polynomial maps $F = x + H$, such that $H$ is cubic homogeneous,
i.e.\@ each component $H_i$ of $H$ is either zero or a cubic form.

Upon this reduction to the cubic homogeneous case, there are two subsequent reductions, but they
cannot be applied both. The first one is that additionally, $H_i$ is a third power of a linear form
for each $i$, see \cite{MR0714105}. The second one, which requires that the imaginary unit $\I \in K$,
is that $\jac H$ or equivalently $\jac F$ is symmetric,
see \cite{MR2138860}. By a special case of Poincar{\'e}'s lemma, this is the same as that $F = \grad f$
and $H = \grad h$ for certain polynomials $f, h \in K[x]$. If both $H_i$ is a (third) power of a linear form
and $H_i = \parder{}{x_i} h$ for each $i$, then $F = x + H$ is tame with inverse $x - H$, see
\cite{MR2183036} and \cite[Th.\@ 3.4]{MR2208537}.

In \cite[Th.\@ 3.7]{MR2333454}, S. Bakalarski proved the following interesting connection
between invertible polynomial maps and irreducibility: a Keller map from $\C^n$ to $\C^n$ is invertible,
if and only if the corresponding endomorphism maps irreducible polynomials to irreducible polynomials.
K. Rusek improved this result by showing that the Keller condition is not necessary.
This improved result was generalized to arbitrary fields of characteristic zero
in \cite[Th.\@ 5.2]{MR2965914} by P. J{\polhk{e}}drzejewicz. In \cite[Th.\@ 5.1]{MR2965914},
J{\polhk{e}}drzejewicz proved the following counterpart of this result:
a polynomial map from $K^n$ to $K^n$ is a Keller map,
if and only if the corresponding endomorphism maps irreducible polynomials to square-free polynomials.
We shall show in the next section that for Keller maps, the corresponding endomorphism even maps all
square-free polynomials to square-free polynomials.

In \cite{MR1106179}, S. Kaliman showed that in order to prove the JC in
dimension $n = 2$ for $K = \C$ (and hence for all $K$ by \cite[Prop.\@ 1.1.12]{MR1790619}),
one may assume that $F_1 + c$ is irreducible for every $c \in K$.
To prove the JC in dimension $n \ge 3$ for $K = \C$
one may even assume that $F_i + c$ is irreducible for
every $i \le n$ and $c \in K$. This was proved in \cite[Th.\@ 3]{MR1886939}.
We shall show that for the JC for all $n$, one may even assume that every affinely linear
combination of the components of $F$ is irreducible. Furthermore, we combine this reduction with
several other reductions of the JC, including both reductions in the previous
paragraph. See Theorems \ref{chKeller} and \ref{dKeller}.

In \cite{MR2607856}, Bakalarski proved that each component of $F$ is irreducible if
$F = x + H$, $\det \jac F = 1$, $H$ is cubic homogeneous and $\jac H$ is symmetric. 
(Actually, Bakalarski proved his result only for $K = \C$, but using Lefschetz' principle,
one may assume that $K \subseteq \C$, which gives the general case.) Notice that $F_i$ is
the image of $x_i$ under the corresponding endomorphism of $F$. We will generalize this result in
i) of Theorem \ref{bakext}, where we show that $F_i$ is irreducible if $\jac F$ is symmetric,
$\det \jac F \in K^{*}$ and $F_i = l + h$ such that $h$ and $l$ are homogeneous and
$\parder{}{x_i} l \in K^{*}$.
Notice that, as opposed to the result of Bakalarski, the index variable $i$ is free instead of
bound by an universal quantifier. So the conditions on $l$ and $h$ are for $F_i$ only, and not for the
$F_j$ with $j \ne i$.

Additionally, we show in Corollary \ref{irredcor} that $F_i$ is
irreducible if $\det \jac F \in K^{*}$ and the set of degrees of monomials of $F_i$ is $\{0,1,3\}$.
If we combine this result with the above-mentioned result of Theorem \ref{bakext}, we can conclude that
$F_i + c$ is irreducible for all $c \in K$ if $\jac F$ is symmetric,
$\det \jac F \in K^{*}$, and $F = l + h$ such that $h$ is cubic homogeneous, $\deg l = 1$,
and $\parder{}{x_i} l \in K^{*}$. The latter result can also be found in i) of Theorem \ref{bakext}.

As an end of this introduction, we summarize some results in connection with coordinates.
A polynomial $f \in K[x]$ is a {\em coordinate} if there exists
an invertible polynomial map $F \in K[x]^n$ such that $f = F_1$. After some partial results in
\cite{MR1456093} and \cite{MR1679074}, Z. Jelonek proved in \cite{MR2905013} that a polynomial map
over $K$ is invertible, if and only if the corresponding endomorphism maps coordinates to coordinates.
The result of \cite[Lm.\@ 2.3]{MR1456093} by H. Derksen is that a polynomial map over
$\bar{K}$ is a Keller map, if and only if the corresponding endomorphism maps linear
coordinates to polynomials with nowhere vanishing gradients (for instance coordinates).

Obviously, Derksen's result is still valid if we replace `linear coordinates' by `coordinates'.
It is however not true in general that a polynomial
map over $K$ is invertible, if and only if the corresponding endomorphism maps {\em linear} coordinates to
coordinates, see \cite[Th.\@ 2.1]{MR1682757} (so Derksen's result is only valid if $K = \bar{K}$).
But  C. Cheng and A. van den Essen proved in \cite[Th.\@ 1.1]{MR1653437}, that in the case $n=2$,
it indeed suffices to show that the images of linear coordinates are coordinates.
Furthermore, A. van den Essen and V. Shpilrain
showed in \cite[Th.\@ 1.2]{MR1456093} that Keller maps $F$ are invertible if
$F_1$ is a coordinate and the JC holds in dimension $n - 1$.

\section{Some properties of Keller maps}

We start with a generalization of \cite[Th.\@ 4.1]{MR2965914} by J{\polhk{e}}drzejewicz.
To be precise, \cite[Th.\@ 4.1]{MR2965914} is the equivalence of 1) and 2) in the theorem
below, for the case that $g$ is irreducible.

\begin{theorem} \label{J4.1gen}
Let $F \in K[x]^n$
be an arbitrary polynomial map. If $g \in K[x]$ is {\em square-free}, then
the following conditions are equivalent:
\begin{enumerate}

\item[1)] $g \mid \det \jac F$,

\item[2)] for every {\em irreducible} $\tilde{g} \mid g$, there exists an {\em irreducible} polynomial
          $\tilde{w} \in K[y]$ such that $\tilde{g}^2 \mid \tilde{w}(F)$,

\item[3)] $g^2 \mid w(F)$ for some {\em square-free} polynomial $w \in K[y]$.

\end{enumerate}
\end{theorem}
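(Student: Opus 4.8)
The plan is to prove the cycle of implications $3)\Rightarrow 1)\Rightarrow 2)\Rightarrow 3)$, using the theorem of J\polhk{e}drzejewicz (\cite[Th.\@ 4.1]{MR2965914}) for the irreducible case as a black box wherever possible, and gluing the irreducible pieces together via the square-freeness hypothesis on $g$.

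For $3)\Rightarrow 1)$: suppose $g^2 \mid w(F)$ with $w$ square-free. Since $g$ is square-free, it suffices to show that every irreducible factor $\tilde g$ of $g$ divides $\det\jac F$. Fix such a $\tilde g$. Then $\tilde g^2 \mid w(F)$, so by the cited equivalence of 1) and 2) in the irreducible case (applied with the single irreducible polynomial $\tilde g$, noting that $w$ need only be square-free there — or by writing $w = \prod w_j$ into irreducible factors and observing that $\tilde g^2$ must divide one $w_j(F)$ after passing to a suitable localization, since $\tilde g$ is irreducible), we conclude $\tilde g \mid \det\jac F$. As $g$ is square-free and each of its irreducible factors divides $\det\jac F$, we get $g \mid \det \jac F$.

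For $1)\Rightarrow 2)$: if $\tilde g \mid g \mid \det\jac F$ with $\tilde g$ irreducible, then $\tilde g \mid \det\jac F$, and J\polhk{e}drzejewicz's result furnishes an irreducible $\tilde w \in K[y]$ with $\tilde g^2 \mid \tilde w(F)$. This is immediate.

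For $2)\Rightarrow 3)$: this is the main obstacle, and where the square-freeness of $g$ is genuinely used. Write $g = \tilde g_1 \tilde g_2 \cdots \tilde g_r$ with the $\tilde g_k$ pairwise non-associate irreducibles. For each $k$, hypothesis 2) gives an irreducible $\tilde w_k \in K[y]$ with $\tilde g_k^2 \mid \tilde w_k(F)$. The naive guess $w = \prod_k \tilde w_k$ fails to be square-free if the $\tilde w_k$ are not pairwise coprime, so I would first argue that we may take the $\tilde w_k$ pairwise non-associate: if $\tilde w_k$ and $\tilde w_\ell$ are associate for $k \ne \ell$, then $\tilde w_k(F)$ is divisible by both $\tilde g_k^2$ and $\tilde g_\ell^2$, hence by $(\tilde g_k\tilde g_\ell)^2$ since these are coprime, and one can replace the pair by a single factor — more carefully, one reorganizes the index set so that distinct chosen polynomials $w_j$ are pairwise non-associate and $w_j^{\,?}$-style bookkeeping shows $\big(\prod_{k : \tilde w_k \sim w_j} \tilde g_k\big)^2 \mid w_j(F)$. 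Then set $w := \prod_j w_j$, a square-free polynomial (product of pairwise non-associate irreducibles). Since the factors $\big(\prod_{k: \tilde w_k \sim w_j}\tilde g_k\big)^2$ are pairwise coprime and each divides the corresponding $w_j(F)$, and the $w_j(F)$ are "independent" in the sense that a prime dividing $w_j(F)$ to high order need not divide $w_{j'}(F)$, we obtain $g^2 = \prod_k \tilde g_k^2 \mid \prod_j w_j(F) = w(F)$, as desired. The delicate point to get right is that multiplicativity of divisibility across the $w_j(F)$ is legitimate precisely because the relevant squared factors $\tilde g_k^2$ are pairwise coprime in the UFD $K[x]$; once that is in place the argument closes.
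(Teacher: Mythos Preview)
Your $1)\Rightarrow 2)$ and $2)\Rightarrow 3)$ are correct and coincide with the paper's argument (the paper simply takes $w$ to be the least common multiple of the $\tilde w$, which is your regrouping by association classes in different language).

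The genuine gap is in $3)\Rightarrow 1)$. From $\tilde g^2\mid w(F)$ with $w$ square-free you want to invoke \cite[Th.\@ 4.1]{MR2965914}, but that result (as the paper states explicitly) requires an \emph{irreducible} $\tilde w$ with $\tilde g^2\mid \tilde w(F)$. Neither of your parenthetical fixes works. The claim that after factoring $w=\prod_j w_j$ into irreducibles the square $\tilde g^2$ must divide a single $w_j(F)$ is simply false in a UFD: in the DVR $K[x]_{(\tilde g)}$ nothing rules out $v_{\tilde g}\big(w_1(F)\big)=v_{\tilde g}\big(w_2(F)\big)=1$ while $v_{\tilde g}\big(w(F)\big)=2$. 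Localization buys you additivity of valuations, not that one summand is already $\ge 2$.

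Excluding (or circumventing) exactly this scenario is the real content of the theorem, and the paper spends its effort there, in the direction $3)\Rightarrow 2)$. Given $\tilde g^2\mid w(F)$, write $w=w_1 w_2$ with $w_1$ irreducible and $\tilde g\mid w_1(F)$; if $\tilde g^2\nmid w_1(F)$ then $\tilde g\mid w_2(F)$, and one looks at $r:=\trdeg_K K(\bar F_1,\ldots,\bar F_n)$ with $\bar F_i=F_i\bmod(\tilde g)$. One has $r\le n-1$ since $w_1(\bar F)=0$. If $r\le n-2$, the internal argument of J\polhk{e}drzejewicz's proof produces \emph{some} irreducible $\tilde w$ (not necessarily a factor of $w$) with $\tilde g^2\mid\tilde w(F)$. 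If $r=n-1$, the relation ideal of $(\bar F_1,\ldots,\bar F_n)$ has height $1$ but contains the coprime pair $w_1,w_2$, contradicting Krull's Principal Ideal Theorem. So the ``valuation splits between two factors'' possibility you waved past is precisely where the new idea is needed.
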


\begin{proof}
Assume that $g \in K[x]$ is square-free. The equivalence of 1) and 2) follows by applying
\cite[Th.\@ 4.1]{MR2965914} for all irreducible polynomials $\tilde{g} \mid g$.
To prove 2) $\Longrightarrow$ 3), take for $w$ in 3) the least common multiple of all
$\tilde{w}$ appearing in 2). Then $w(F)$ in 3) is a common multiple of the $\tilde{w}(F)$
appearing in 2). Since $g^2$ in 3) is the least common multiple of the $\tilde{g}^2$ appearing
in 2), 2) $\Longrightarrow$ 3) follows. Hence it remains to show 3) $\Longrightarrow$ 2).

So assume 3) and let $\tilde{g}$ be an arbitrary irreducible divisor of $g$. We have to show that there
exists an irreducible $\tilde{w} \in K[y]$ such that $\tilde{g}^2 \mid \tilde{w}(F)$.
Since $g^2 \mid w(F)$, we can
decompose $w = w_1 w_2$, such that $w_1$ is irreducible and $\tilde{g} \mid w_1(F)$.
If $\tilde{g}^2 \mid w_1(F)$, then we are done, so suppose that $\tilde{g}^2 \nmid w_1(F)$. Then
$\tilde{g} \mid w_2(F)$. Let $\bar{F}$ be the residue classes of $F$ modulo $\tilde{g}$, i.e.\@
$\bar{F}_i = F_i + (\tilde{g})$ for each $i$. Define $r := \trdeg_K K(\bar{F}_1,\bar{F}_2,\ldots,\bar{F}_n)$
and assume without loss of generality that $\bar{F}_1,\bar{F}_2,\ldots,\bar{F}_r$ are algebraically
independent over $K$. Then $r \le n-1$, because $w_1(\bar{F}) = 0$.

If $r \le n-2$, then we can follow the last paragraph in the proof of (i) $\Longrightarrow$ (ii)
of \cite[Th.\@ 4.1]{MR2965914} verbatim to obtain that $\tilde{g}^2 \mid \tilde{w}(F)$ for some
irreducible $\tilde{w} \in K[y]$. So assume that
$r = n-1$. Notice that $w_1$ and $w_2$ are relatively prime, because $w$ is square-free.
Hence the ideal $(w_1,w_2)$ is not contained in a principal prime ideal of $K[y]$.
Since $K[y]$ is a unique factorization domain, we can deduce that the ideal $(w_1,w_2)$ has height at least two.
On the other hand, the ideal in $K[y]$ of algebraic relations between $\bar{F}_1,\bar{F}_2,
\ldots,\bar{F}_n$ has height $n - r$ and contains $(w_1,w_2)$. So $n - r \ge 2$, which contradicts $r = n-1$.
\end{proof}

\noindent
One can also obtain a contradiction to $r = n-1$ by showing that the resultant with respect to
$y_n$ of $w_1$ and $w_2$ is a nontrivial algebraic relation between $\bar{F}_1,\bar{F}_2,\ldots,
\bar{F}_{n-1}$.

Just like \cite[Th.\@ 4.1]{MR2965914}, its immediate consequence \cite[Cor.\@ 4.2]{MR2965914}
can be generalized. We do this by extending it with one line, namely property 3).

\begin{corollary}
Let $F \in K[x]^n$
be an arbitrary polynomial map. Then the following conditions are equivalent:
\begin{enumerate}

\item[1)] $\det \jac F \in K^{*}$,

\item[2)] for every {\em irreducible} polynomial $w \in K[y]$, the polynomial $w(F)$ is square-free,

\item[3)] for every {\em square-free} polynomial $w \in K[y]$, the polynomial $w(F)$ is square-free.

\end{enumerate}
\end{corollary}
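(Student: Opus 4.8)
The plan is to deduce this corollary directly from Theorem~\ref{J4.1gen} by specializing to the case $g = 1$, together with the known fact that $\det\jac F \in K^{*}$ is equivalent to $\det\jac F$ having no irreducible divisor. More precisely, I would first observe that $1 \mid \det\jac F$ always holds, so condition 1) of Theorem~\ref{J4.1gen} with $g=1$ is vacuous and gives no information; instead the link to the constant-Jacobian condition must come from a uniform statement over \emph{all} square-free $g$. So the better route is: $\det\jac F \in K^{*}$ iff $g \mid \det\jac F$ for every square-free $g \in K[x]$ --- indeed, if $\det\jac F \notin K^{*}$ then it has an irreducible factor $p$, and $g = p$ is square-free with $p \nmid 1$ forcing a contradiction; conversely if $\det\jac F \in K^{*}$ then nothing nonconstant divides it, but every square-free $g$ divides $\det \jac F$ would be false unless... hmm, this direction needs care.

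Let me restructure. The cleanest approach: prove 1) $\Rightarrow$ 3) $\Rightarrow$ 2) $\Rightarrow$ 1). For 1) $\Rightarrow$ 3), let $w \in K[y]$ be square-free; I want $w(F)$ square-free. Suppose not, so $\tilde g^2 \mid w(F)$ for some irreducible $\tilde g \in K[x]$. Then $g := \tilde g$ is square-free and condition 3) of Theorem~\ref{J4.1gen} holds for this $g$ (with the given $w$, after possibly noting $g^2 = \tilde g^2 \mid w(F)$), so by that theorem condition 1) holds: $\tilde g \mid \det \jac F$, contradicting $\det\jac F \in K^{*}$. For 3) $\Rightarrow$ 2), simply note that every irreducible polynomial is square-free, so 3) is formally stronger. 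For 2) $\Rightarrow$ 1), this is exactly \cite[Cor.\@ 4.2]{MR2965914} (the equivalence of 1) and 2) being the original corollary of J\polhk{e}drzejewicz), or alternatively: if $\det\jac F \notin K^{*}$ pick an irreducible divisor $\tilde g$ of it, set $g = \tilde g$, apply 1) $\Rightarrow$ 2) of Theorem~\ref{J4.1gen} to get an irreducible $w$ with $\tilde g^2 \mid w(F)$, so $w(F)$ is not square-free, contradicting 2).

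The main point --- and the only place any real content is used --- is the implication 1) $\Rightarrow$ 3), and there the work is entirely outsourced to Theorem~\ref{J4.1gen}: the non-trivial 3) $\Rightarrow$ 1) direction of that theorem (via Krull's Principal Ideal Theorem) is what powers it. I expect no obstacle here; the proof is a short bookkeeping argument. The only thing to be slightly careful about is the direction of the equivalences in Theorem~\ref{J4.1gen}: I must apply its ``3) $\Rightarrow$ 1)'' with the single irreducible $g = \tilde g$ dividing the hypothetical repeated factor of $w(F)$, and its ``1) $\Rightarrow$ 2)'' to produce a witness irreducible $w$ when $\det\jac F$ is non-constant. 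I would write the proof as: ``The implication 3) $\Rightarrow$ 2) is trivial since irreducible polynomials are square-free. For 2) $\Rightarrow$ 1) and 1) $\Rightarrow$ 3), apply Theorem~\ref{J4.1gen} with $g$ taken to be an irreducible divisor of $\det\jac F$, respectively an irreducible divisor of a hypothetical square $\tilde g^2 \mid w(F)$; in each case the relevant equivalence of Theorem~\ref{J4.1gen} yields the claim.''
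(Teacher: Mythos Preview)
Your restructured argument is correct and is essentially the paper's proof unpacked: the paper condenses it into the single observation that each of 1), 2), 3) is equivalent to the nonexistence of an irreducible $g \in K[x]$ satisfying the corresponding condition of Theorem~\ref{J4.1gen}, whence the equivalence of the three conditions there (for each such $g$) immediately gives the corollary. Your cycle 1) $\Rightarrow$ 3) $\Rightarrow$ 2) $\Rightarrow$ 1) spells out exactly these contrapositives, so there is no substantive difference.
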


\begin{proof}
Every assertion is equivalent to the nonexistence of an irreducible $g$ in the respective
assertion of Theorem \ref{J4.1gen}.
\end{proof}

\noindent
We end this section with a theorem about some reducibility properties which cannot be combined with the
Keller condition. We use a result of \cite{MR1285548} for that.

\begin{theorem} \label{lindiv}
Let $F \in K[x]^n$
be a Keller map. Suppose that for each $i$, $F_i$ is of the form $L_i H_i$, where $\deg L_i = 1$.
Then the following statements are equivalent.
\begin{enumerate}

\item[1)] The linear part of $L_i(0) H_i$ is divisible by $L_i - L_i(0)$ for each $i$,

\item[2)] $L(0)$ is contained in the column space of $\jac L$,

\item[3)] $\det \jac L \in K^{*}$,

\item[4)] $\deg F = 1$,

\item[5)] $F_i$ is irreducible for each $i$.

\end{enumerate}
\end{theorem}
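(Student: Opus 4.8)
The plan is to prove the equivalences via $4\Leftrightarrow 5$, $4\Rightarrow 1\Rightarrow 3$, $3\Leftrightarrow 2$ and $3\Rightarrow 4$, which puts all five statements into a single equivalence class. Note first that $H_i\neq 0$ for every $i$: otherwise $F_i=0$, the $i$-th row of $\jac F$ vanishes and $\det\jac F=0$, against the Keller condition. Hence $\deg F_i=1+\deg H_i\geq 1$, so $\deg F=1$ is equivalent to $\deg H_i=0$ for all $i$. This gives $4\Leftrightarrow 5$: if $\deg H_i=0$, then $F_i$ is a nonzero scalar times the degree-one polynomial $L_i$, hence irreducible; conversely, if $F_i=L_iH_i$ is irreducible, then one factor is a unit, and since $\deg L_i=1$ it must be $H_i$. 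It also gives $4\Rightarrow 1$: if $\deg H_i=0$, then $L_i(0)H_i$ is a constant, so its linear part is $0$, which is divisible by $L_i-L_i(0)$.

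For $1\Rightarrow 3$ I would write $L_i=L_i(0)+\ell_i$, where the linear form $\ell_i:=L_i-L_i(0)$ is nonzero since $\deg L_i=1$, and $H_i=H_i(0)+h_i+\cdots$ with $h_i$ the degree-one part of $H_i$. The degree-one part $\lambda_i$ of $F_i$ then equals $L_i(0)h_i+H_i(0)\ell_i$, and $L_i(0)h_i$ is precisely the linear part of $L_i(0)H_i$. Thus statement 1 gives $\ell_i\mid L_i(0)h_i$, hence $\ell_i\mid\lambda_i$; since $\ell_i\neq 0$ and $\lambda_i$ is linear, this forces $\lambda_i=\mu_i\ell_i$ for some $\mu_i\in K$. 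Because $F$ is Keller, $\det\jac F$ is a nonzero constant equal to $\det\jac F(0)$, so $\jac F(0)$ is invertible; as its rows are the coefficient vectors of $\lambda_1,\dots,\lambda_n$, these are linearly independent, hence so are $\mu_1\ell_1,\dots,\mu_n\ell_n$, which forces every $\mu_i\neq 0$ and $\ell_1,\dots,\ell_n$ independent. The rows of $\jac L$ being the coefficient vectors of the $\ell_i$, we get $\det\jac L\neq 0$, hence $\det\jac L\in K^{*}$, i.e.\ statement 3.

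For $3\Leftrightarrow 2$: if $\det\jac L\in K^{*}$ the constant matrix $\jac L$ is invertible, so its column space is all of $K^n\ni L(0)$, giving $3\Rightarrow 2$. Conversely, statement 2 says the system $(\jac L)v=L(0)$ has a solution $v\in K^n$; put $p:=-v$. Then $L_i(p)=L_i(0)-((\jac L)v)_i=0$ for each $i$, so $F_i(p)=0$, and since $\partial L_i/\partial x_j$ is constant the $i$-th row of $\jac F(p)$ is $H_i(p)$ times the $i$-th row of $\jac L$, i.e.\ $\jac F(p)=\diag(H_1(p),\dots,H_n(p))\,\jac L$. Taking determinants and using that $\det\jac F$ is the constant $\det\jac F(p)$, we obtain $\det\jac F=\big(\prod_i H_i(p)\big)\det\jac L\in K^{*}$, so $\det\jac L\neq 0$ and hence $\det\jac L\in K^{*}$, which is $2\Rightarrow 3$.

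The one remaining implication, $3\Rightarrow 4$, is where I expect the real work to lie and where the result of \cite{MR1285548} is needed. Assuming $\det\jac L\in K^{*}$, the matrix $\jac L$ is invertible, so $u:=L(x)$ is an invertible affine change of coordinates; after it, every component of $F$ takes the form $u_i\hat H_i(u)$ with $\hat H_i\neq 0$, and $F$ is still a Keller map. Hence it suffices to treat the case $L_i=x_i$, that is $x_i\mid F_i$ for every $i$, and to show that such a Keller map has $\deg F=1$ --- a property unaffected by the (linear) coordinate change. This last point is the genuine content of the theorem: the other implications use only elementary linear algebra and the single fact that $\det\jac F$ is a nonzero constant, whereas here the strong conclusion $\deg F=1$ (equivalently, irreducibility of every $F_i$) has to be forced out of the purely numerical hypothesis $\det\jac L\in K^{*}$, and this is exactly what the cited result supplies.
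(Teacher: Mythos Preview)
Your proof is correct and follows essentially the same route as the paper: the same set of implications $4\Leftrightarrow 5$, $4\Rightarrow 1$, $1\Rightarrow 3$, $3\Rightarrow 2$, $2\Rightarrow 3$, $3\Rightarrow 4$ is established, and the individual arguments match (the paper phrases $2\Rightarrow 3$ via the Hadamard product $F(x-a)=L(x-a)*H(x-a)$, which is exactly your product-rule computation $\jac F(p)=\diag(H_i(p))\,\jac L$ at the common zero $p$ of the $L_i$). For $3\Rightarrow 4$ the paper too reduces to the case $L_i=x_i$ by composing with $L^{-1}$, and then invokes \cite[Prop.~6]{MR1285548}; you may want to cite that proposition specifically rather than the paper as a whole.
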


\begin{proof}
Notice that 3) $\Longrightarrow$ 2) and 4) $\Longleftrightarrow$ 5) are trivial.
Hence it suffices to prove the following.
\begin{description}

\item[1) \imp 3)]
Suppose that 1) holds. Since $F_i = L_i(0) H_i + \big(L_i - L_i(0)\big) H_i$ for all $i$,
we can deduce that for each $i$, the linear part of $F_i$ is equal to $\big(L_i - L_i(0)\big) c_i$ for
some $c_i \in K$. Hence $c_1 c_2 \cdots c_n \det \jac L = (\det \jac F)|_{x=0}$.
Now 3) follows from the Keller condition on $F$.

\item[2) \imp 3)]
Let $*$ denote the Hadamard product and suppose that 2) holds. Say that $\jac L \cdot a = L(0)$,
where $a \in K^n$. Then the constant part of $L(x-a)$ is equal to
$$
L(-a) = \jac L|_{x=-a} \cdot (-a) + L(0) = \jac L \cdot (-a) + \jac L \cdot a = 0.
$$
Hence the linear part of $F(x-a) = L(x-a) * H(x-a)$ is equal to $L(x-a) * H(-a)$.
Using the Keller condition for $F$, $\det \jac L \mid \det (\jac F)|_{x=x-a} \in K^{*}$
follows, which is 3).

\item[3) \imp 4)]
Suppose that 3) holds. Then $L$ is invertible and
$$
F\big(L^{-1}(x)\big) = L\big(L^{-1}(x)\big) * H\big(L^{-1}(x)\big) = x * H\big(L^{-1}(x)\big)
$$
is a Keller map as well. It follows from \cite[Prop.\@ 6]{MR1285548} that 
$\deg H = \deg H\big(L^{-1}(x)\big) = 0$. Hence $\deg F = 1$.

\item[4) \imp 1)]
Suppose that $\deg F = 1$. Then $\deg H_i=0$ and hence $L_i(0)H_i\in K$ for each $i$.
Thus for each $i$, the linear part of $L_i(0)H_i$ is zero, which is divisible by $L_i - L_i(0)$.
\qedhere

\end{description}
\end{proof}

\section{Irreducibility results for reductions of the JC}

\begin{theorem} \label{chKeller}
Assume $F \in K[x]^n$ is a cubic Keller map without quadratic part.
Then there exists a $\lambda \in K^n$ such that for
\begin{align}
G &= (F - \lambda x_{n+1}^3, x_{n+1}), \label{ch} \\
G &= (F - \lambda x_{n+1}^3, x_{n+1}, x_{n+2} + x_{n+1}^3), \label{dz} \\
\intertext{and}
G &= (F - \lambda x_{n+1}^3, x_{n+2} - 3 x \tp \lambda x_{n+1}^2, x_{n+1}) \nonumber \\
  &= (F,0,0) + \grad_{x,x_{n+1},x_{n+2}} \big(x_{n+1}x_{n+2} - x\tp \lambda x_{n+1}^3\big), \label{sch}
\end{align}
every linear combination of the components of $G$ and $1$ which is reducible is already a
linear combination of $1$.

Furthermore, $G$ is a cubic Keller map without quadratic part, and
$F$ is invertible, if and only if $G$ is invertible. Additionally, we have the following.
\begin{enumerate}

\item[i)] If $F$ is linearly conjugate to a Dru{\.z}kowski map, then so is $G$ in \eqref{dz}.

\item[ii)] If $\jac F$ is symmetric, then so is $\jac_{x,x_{n+1},x_{n+2}} G$ in \eqref{sch}.

\end{enumerate}
\end{theorem}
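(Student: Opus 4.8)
The plan is to treat the three maps $G$ uniformly by reducing everything to \eqref{ch}. In \eqref{dz} and \eqref{sch} the last component is the only one containing $x_{n+2}$, and it contains it linearly with leading coefficient $1$; hence any affinely linear combination of the components of $G$ and $1$ in which that last component occurs with nonzero coefficient is a degree-one polynomial in $x_{n+2}$ with unit leading coefficient, so it is irreducible, while a combination in which it occurs with coefficient $0$ is literally a combination of the components of $(F-\lambda x_{n+1}^3,x_{n+1})$ and $1$. So it suffices to produce a single $\lambda\in K^n$ that works for \eqref{ch}.

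For \eqref{ch} write a typical combination as $p=g-\mu x_{n+1}^3+a_{n+1}x_{n+1}$ with $g=\sum_{i=1}^n a_iF_i+a_0$ and $\mu=\sum a_i\lambda_i$, and let $L_a$, $H_a$ denote the linear and cubic parts of $\sum a_iF_i$; since $F$ is a Keller map its linear part is a linear automorphism, so $L_a\ne0$ whenever $a\ne0$. If $\mu\ne0$, then $p$ is a cubic in $x_{n+1}$ over the UFD $K[x]$ with the unit $-\mu$ as leading coefficient, so by Gauss's lemma it is reducible exactly when it has a root $\rho\in K(x)$; clearing denominators forces $\rho\in K[x]$ with $g=\mu\rho^3-a_{n+1}\rho$, comparing degrees forces $\deg\rho\le1$, and matching homogeneous parts (using that $g$ has no quadratic part) forces $\rho$ to be a nonzero linear form with $H_a=\mu\rho^3$ and $L_a$ proportional to $\rho$, i.e.\ $H_a\in K^\ast L_a^3$. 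If $\mu=0$ and $a_{n+1}\ne0$, then $p$ has degree one in $x_{n+1}$ with unit leading coefficient, hence is irreducible; and if $\mu=0=a_{n+1}$ then $p=g$. So a non-constant combination can be reducible only if either (A) $H_a\in K^\ast L_a^3$ for some $a\ne0$, or (B) $\sum a_iF_i+a_0$ is reducible for some $a\ne0$ with $\sum a_i\lambda_i=0$ and some $a_0$.

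Case (A) never occurs, for any Keller map: if $H_a=cL_a^3$ with $c\ne0$, then $(\jac F)\tp a=\grad\big(\sum a_iF_i\big)=(1+3cL_a^2)\grad L_a$, so $\big((\jac F)\tp\big)^{-1}\grad L_a=(1+3cL_a^2)^{-1}a$; the left-hand side is a polynomial vector because $\det\jac F\in K^\ast$, while the right-hand side is not, since its denominator is non-constant ($c\ne0$, $L_a\ne0$). For case (B): if $\sum a_iF_i+a_0=PQ$ with $\deg P=1$, $\deg Q=2$, put $M:=\big((\jac F)\tp\big)^{-1}$, a polynomial matrix, and use $a=Q\cdot M\grad P+P\cdot M\grad Q$; restricting to the hyperplane $\{P=0\}$, where $M\grad P$ is a nowhere-zero polynomial vector proportional to the constant vector $a$, forces $Q$ to be constant there, hence $Q=P\ell'+\text{const}$ and $\sum a_iF_i+a_0=P^2\ell'+c'P$, and then the absence of quadratic terms together with the impossibility of (A) forces $P$ to be a linear form proportional to $L_a$, so that $L_a^2\mid H_a$ with $H_a\ne0$. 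Conversely, whenever $L_a^2\mid H_a\ne0$ one gets such a reducible combination by choosing $a_0$ to cancel the constant term. So (B) is equivalent to $[a]$ lying in $\mathcal{B}:=\{[a]\in\mathbb{P}^{n-1}:L_a^2\mid H_a\ne0\}$.

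It remains to choose $\lambda$ so that the hyperplane $\{a:\sum a_i\lambda_i=0\}$ meets no line of $\mathcal{B}$; since $K$ is infinite this is possible as soon as $\mathcal{B}$ is finite, and then every reducible non-constant combination of components of $G$ falls under (B) and so forces $a\in\mathcal{B}$ with $\mu=\sum a_i\lambda_i=0$, contradicting the choice of $\lambda$. Proving that $\mathcal{B}$ is finite — I expect it to be empty — is the main obstacle. For this I would normalize the linear part of $F$ to $x$, take $[a]\in\mathcal{B}$, and change coordinates (preserving the linear part $x$) so that $a=e_1$ and, after a shear which is available precisely because $H_a\notin K^\ast L_a^3$ by (A), so that the cubic part of $F_1$ is $x_1^2x_2$; the first row of $\jac F$ is then $(2x_1x_2+1,x_1^2,0,\dots,0)$, and expanding $\det\jac F\in K^\ast$ along this row while tracking degrees — the entries have degree $\le2$ with no degree-one part, so the relevant cofactors involve only even-degree monomials — yields divisibility constraints with no solution; in dimension two this is the direct verification that $x_1^2x_2+x_1$ cannot be a component of a cubic Keller map without quadratic terms, and the general case is the same bookkeeping. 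Granted this, any $\lambda$ outside the finitely many obstructed hyperplanes works (and $\lambda=0$ if $\mathcal{B}=\emptyset$). The remaining assertions are routine: $G$ is again a cubic Keller map without quadratic terms by the triangular shape of $\jac G$; $F$ is invertible iff $G$ is, since $G^{-1}$ is obtained explicitly by substituting $x_{n+1}$ back; ii) holds because $\jac_{x,x_{n+1},x_{n+2}}G$ in \eqref{sch} is $\jac_xF$ bordered by zeros plus the symmetric Hessian $\hess\big(x_{n+1}x_{n+2}-x\tp\lambda x_{n+1}^3\big)$; and i) follows by exhibiting the explicit rank-one linear conjugation $I-\hat\lambda\,e_{n+2}\tp$ with $\hat\lambda=(\lambda,0,0)$, which carries the cubic part $\big((H_i-\lambda_ix_{n+1}^3)_i,\,0,\,x_{n+1}^3\big)$ of $G$ in \eqref{dz} to cubes of linear forms whenever $H$ is power-linear.
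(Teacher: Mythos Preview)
Your reduction of \eqref{dz} and \eqref{sch} to \eqref{ch}, your case analysis according to whether $\mu$ and $a_{n+1}$ vanish, and your elimination of case (A) are all correct and closely match the paper's Lemma \ref{mulem} and Corollary \ref{irredcor}. Likewise your characterization of (B) in terms of the set $\mathcal{B}=\{[a]:L_a^2\mid H_a\ne0\}$ is exactly what the paper obtains (via 1) $\Rightarrow$ 3) of Corollary \ref{irredcor}), and your observation that any $\lambda$ whose orthogonal hyperplane misses $\mathcal{B}$ suffices is the same mechanism behind Corollary \ref{lambdacol}.

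The genuine gap is the step you yourself flag as ``the main obstacle'': showing $\mathcal{B}$ is finite. Your sketch normalizes to $H_1=x_1^2x_2$ and asserts that expanding $\det\jac F=1$ along the first row ``yields divisibility constraints with no solution; \dots\ the general case is the same bookkeeping''. This is not a proof, and more importantly the claim that $\mathcal{B}$ is \emph{empty} is very likely out of reach by such means: if it were provable by elementary determinant tracking, it would show that every affinely linear combination of the $F_i$ is irreducible for \emph{every} cubic homogeneous Keller map in every dimension, which is not known. The paper does \emph{not} attempt to show $\mathcal{B}=\emptyset$; instead it proves the weaker (but sufficient) bound $|\mathcal{B}|\le n-4$ in Theorem \ref{irredth}. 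That theorem is the real engine of the argument and is nontrivial: the base case $n\le4$ invokes Hubbers' result that the JC holds for cubic homogeneous maps in those dimensions, and the inductive step exploits 1) $\Rightarrow$ 3) of Corollary \ref{irredcor} to strip a variable and descend. Your two-dimensional verification is the $n=2$ instance of this (subsumed by Hubbers), but the passage to higher $n$ is precisely where the induction, not ``the same bookkeeping'', is needed. With $|\mathcal{B}|\le n-4$ in hand, the paper takes $\lambda$ to be the sum of the columns of the $T$ from Theorem \ref{irredth}, which guarantees $\sum a_i\lambda_i\ne0$ for every $[a]\in\mathcal{B}$.

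Two minor points. In your opening reduction you say ``the last component'' contains $x_{n+2}$; for \eqref{sch} it is the $(n{+}1)$-st component, not the last --- the argument is unaffected but the indexing should be fixed. And in i) your conjugation $I-\hat\lambda\,e_{n+2}^{\mathrm t}$ only handles the case where $F$ itself is Dru{\.z}kowski; when $F$ is merely linearly conjugate to one via some $T$, you must combine with $\operatorname{diag}(T,1,1)$, which is exactly the block matrix $\tilde T$ the paper writes down.
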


\begin{proof}
In Corollary \ref{lambdacol}, we will prove the first claim (the existence of $\lambda$)
for
\begin {equation} \label{gh}
G = (F - \lambda x_{n+1}^3, x_{n+1}, x_{n+2} - h),
\end{equation}
where $h \in K[x,x_{n+1}]$ is arbitrary. This immediately gives the first claim for $G$ in
\eqref{dz}. To obtain the first claim for $G$ in \eqref{ch} and \eqref{sch},
we remove the last component of $G$ and interchange the last two
components of $G$ respectively in \eqref{gh}. Thus a $\lambda \in K^n$ as given exists.

By expansion of the determinant along the $(n+2)$-th column, if present, and subsequently
along the last row, we see that
\begin{align*}
\det \jac_{x,x_{n+1}} G &= \det \jac F  \mbox{ in \eqref{ch},} \\
\det \jac_{x,x_{n+1},x_{n+2}} G &= \det \jac F \mbox{ in \eqref{dz}, and} \\
\det \jac_{x,x_{n+1},x_{n+2}} G &= -\det \jac F \mbox{ in \eqref{sch}.}
\end{align*}
Hence $G$ is a Keller map, and one can easily verify that $G$ is a cubic Keller map
without quadratic part.

We only prove the rest of this theorem for the cases \eqref{dz} and \eqref{sch},
since the case \eqref{ch} is similar. Let $E = (x - \lambda x_{n+1}^3, x_{n+1}, x_{n+2})$.
\begin{enumerate}

\item[i)] Assume that $G$ is as in \eqref{dz}. Then
          $$
          G = E(F,x_{n+1},x_{n+2})|_{x_{n+2} = x_{n+2} + x_{n+1}^3}.
          $$
          Consequently, $F$ is invertible, if and only if $G$ is invertible.

          Suppose that $TF(T^{-1}x)$ is a Dru{\.z}kowski map. Set
          $$
          \tilde{T} = \left( \begin{array}{ccc|cc}
                      & & & 0 &  \\
                      & T & & \vdots & \!\!\!\!T\lambda\!\!\!\! \\
                      & & & 0 &  \\ \hline
                      0 & \cdots & 0 & 1 & 0 \\
                      0 & \cdots & 0 & 0 & 1
                      \end{array} \right).
          $$
          Then $\tilde{T}G\big(\tilde{T}^{-1}(x,x_{n+1},x_{n+2})\big)$ is
          a Dru{\.z}kowski map as well. Hence $G$ is linearly conjugate to a
          Dru{\.z}kowski map if $F$ is.

\item[ii)] Assume that $G$ is as in \eqref{sch}. Then
           $$
           (G_1,G_2,\ldots,G_n,G_{n+2},G_{n+1})
           = E(F,x_{n+1},x_{n+2})|_{x_{n+2} = x_{n+2} - 3 x\tp \lambda x_{n+1}^2}.
           $$
           Consequently, $F$ is invertible, if and only if $G$ is invertible.

           Suppose that $\jac F$ is symmetric. Then by \eqref{sch}, $\jac G$ is symmetric as well,
           which completes the proof. \qedhere

\end{enumerate}
\end{proof}

\begin{theorem} \label{dKeller}
Assume that $F \in K[x]^n$ is a Keller map and let $d \ge 2$ be an integer. 
Then for
\begin{align}
G &= (F - y^{*d}, y), \label{chl} \\
G &= (F - y^{*d}, y, z + y^{*d}), \label{dzl} \\
\intertext{and}
G &= (F - y^{*d}, z - d x\tp y^{*(d-1)}, y) \nonumber \\
  &= (F,0,\ldots,0,0,\ldots,0) + \grad_{x,y,z} \big(y\tp z - x\tp y^{*d}\big), \label{schl}
\end{align}
every linear combination of the components of $G$ and $1$ which is reducible is already a
linear combination of $1$.

Furthermore, $G$ is a Keller map and $F$ is invertible, if and only if $G$ is invertible,
and we have the following.
\begin{enumerate}

\item[i)] If $F - x$ is linearly conjugate to a power linear map of degree $d$,
then so is $G-(x,y,z)$ in \eqref{dzl}.

\item[ii)] If $\jac F$ is symmetric, then so is $\jac_{x,y,z} G$ in \eqref{schl}.

\end{enumerate}
\end{theorem}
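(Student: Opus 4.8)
The plan is to imitate the proof of Theorem~\ref{chKeller} step for step; the substantial new input is the irreducibility statement, which is the analogue of Corollary~\ref{lambdacol} but comes out simpler here, because the perturbation $y^{*d}$ uses $n$ separate new variables and no choice like that of $\lambda$ is needed. First the routine parts. Expanding $\det\jac_{x,y,z}G$ along the $z$-block of columns and then along the $y$-block gives $\det\jac G=\pm\det\jac F\in K^{*}$ in each of \eqref{chl}, \eqref{dzl}, \eqref{schl}, so $G$ is a Keller map. With $E=(x-y^{*d},y)$ (resp.\ $E=(x-y^{*d},y,z)$), each $G$ equals $E$ composed on the outside with $(F,y)$ (resp.\ $(F,y,z)$) and, where present, on the inside with an invertible triangular shift of $z$ — plus, in \eqref{schl}, an interchange of the $y$- and $z$-blocks — so $F$ is invertible iff $G$ is. For i), if $TF(T^{-1}x)=x+(Bx)^{*d}$ I would extend $T$ to $\tilde T=\bigl(\begin{smallmatrix}T&0&T\\0&I_n&0\\0&0&I_n\end{smallmatrix}\bigr)$ (the analogue of the matrix in the proof of Theorem~\ref{chKeller}, with $\lambda$ replaced by $I_n$) and verify $\tilde TG\tilde T^{-1}=(x,y,z)+\bigl(C(x,y,z)\bigr)^{*d}$ for a suitable $3n\times 3n$ matrix $C$. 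Part ii) is immediate from the displayed gradient expression for $G$ in \eqref{schl}: it shows $\jac_{x,y,z}G=\bigl(\begin{smallmatrix}\jac_xF&0&0\\0&0&0\\0&0&0\end{smallmatrix}\bigr)+\hess_{x,y,z}\bigl(y\tp z-x\tp y^{*d}\bigr)$, a sum of symmetric matrices when $\jac F$ is.

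For the irreducibility it is enough to treat $G=(F-y^{*d},y,z-h)$ for arbitrary $h\in K[x,y]^n$: \eqref{dzl} and \eqref{schl} have this shape (the latter after permuting components), and \eqref{chl} arises by deleting the $z$-components, which only shrinks the family of linear combinations. So let $P$ be a non-constant linear combination of the components of such a $G$ and of $1$. If $P$ involves some $z_i$, it does so linearly with a nonzero constant coefficient, and $z_i$ occurs in no other component, so $P$ has degree $1$ in $z_i$ with unit leading coefficient and is irreducible. Otherwise $P=\sum_i a_i(F_i-y_i^d)+\sum_i b_iy_i+e\in K[x,y]$; if $a=0$ then $\deg P\le 1$ and $P$ is irreducible, so assume $a_1\ne 0$ and write $P=\psi(y_1)+Q$ with $\psi(t)=-a_1t^d+b_1t$ and $Q=\sum_i a_iF_i-\sum_{j\ge 2}a_jy_j^d+\sum_{j\ge 2}b_jy_j+e\in R:=K[x,y_2,\dots,y_n]$. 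Here the Keller condition enters: $\grad_x\bigl(\sum_i a_iF_i\bigr)=(\jac_xF)\tp a$ is nowhere zero, so $\sum_i a_iF_i$ is non-constant, and $Q-\gamma$ is square-free for every $\gamma\in\bar{K}$ — if $Q-\gamma=B^2C$ with $B$ non-constant, then $B$ divides each $\partial_{x_j}(Q-\gamma)=\partial_{x_j}\bigl(\sum_i a_iF_i\bigr)$, forcing $B\in\bar{K}[x]$, so $\grad_x(\sum_i a_iF_i)$ would vanish at a zero of $B$.

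It remains to prove $P=\psi(y_1)+Q$ irreducible over $\bar{K}$, hence over $K$. If $b_1=0$, this is the classical criterion for irreducibility of $y_1^d-Q/a_1$ over $\operatorname{Frac}R$: $Q/a_1$ is no $p$-th power for a prime $p\mid d$, since $Q=a_1r^p$ would give $r^{p-1}\mid\partial_{x_j}(\sum_i a_iF_i)$, so $r\in\bar{K}[x]$ and, by the gradient argument, $r$ constant, contradicting $Q$ non-constant; and the clause for $4\mid d$ is subsumed since $-4$ is a square in $\bar{K}$. If $b_1\ne 0$, then $\psi$ is a Morse polynomial of degree $d$ — its $d-1$ critical points, the roots of $-a_1dt^{d-1}+b_1$, are simple with pairwise distinct critical values — so its monodromy group is all of $S_d$, generated by the transpositions around the critical values (whose product is the $d$-cycle of monodromy at $\infty$). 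The variety $\{P=0\}$ is the pull-back of the degree-$d$ branched cover $\psi\colon\bar{K}\to\bar{K}$ along $-Q\colon\bar{K}^{2n-1}\to\bar{K}$; since $Q$ is non-constant it is surjective, so every critical value of $\psi$ is attained, and since $Q-\gamma$ is reduced, a small loop about a component of $\{Q=-\gamma\}$ realizes the corresponding branch transposition in the pull-back monodromy. Hence the pull-back monodromy is again $S_d$, it is transitive, and $\{P=0\}$ is irreducible.

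I expect this last step, the case $b_1\ne 0$, to be the main obstacle: unlike the pure-power case there is no reduction to an $x^d-a$-criterion, and one must combine the full $S_d$-monodromy of the Morse polynomial $\psi$ with the fact that $Q-\gamma$ is everywhere reduced — which is precisely where the Keller hypothesis on $F$ is used. (A partial shortcut: $\operatorname{disc}_{y_1}P$ is, up to a nonzero constant, $Q^{d-1}-\rho$ for a constant $\rho$, which is square-free by the same reducedness fact, so in any factorization $P=P_1P_2$ with both factors of positive $y_1$-degree the factors would be coprime, making $\{P=0\}$ disconnected; but one still needs the monodromy to know $\{P=0\}$ is connected.)
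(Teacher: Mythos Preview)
Your routine parts (Keller condition, invertibility equivalence, i), ii)) are correct and match the paper, which for these merely says ``a multidimensional variation of the proof of Theorem~\ref{chKeller}''.

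For the irreducibility claim the paper takes a shorter and more uniform route than yours: it separates \emph{all} of the $y$-variables from $x$, not just $y_1$. Writing $g = f + (g-f)$ with $f=\sum_{i\le n} a_iF_i+e\in\bar K[x]$ and $g-f=\sum_j b_jy_j-\sum_j a_jy_j^d\in\bar K[y]$, Lemma~\ref{mulem2} (via Lemma~\ref{irred}, whose step 2)$\Rightarrow$3) quotes a theorem of Schinzel) shows that reducibility of $g$ forces $f\in\bar K[p^2,p^3]$ for some $p\in\bar K[x]$, hence $\jac f$ is not unimodular, contradicting the Keller hypothesis on $F$. There is no case split on $b_1$, no $X^d-a$ criterion, and no monodromy; this is exactly Corollary~\ref{mulem2col}. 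Your $b_1=0$ case is correct and self-contained, which is a pleasant bonus, but the split is unnecessary.

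Your monodromy argument for $b_1\ne 0$ has a genuine gap. From ``$Q-\gamma$ reduced'' you obtain, for each branch value $\gamma_k$ of $\psi$, a small loop in $V$ whose image in $S_d$ is \emph{some conjugate} of the branch transposition $\tau_k$ --- not $\tau_k$ itself, since the connecting path from the base point is uncontrolled. A subgroup of $S_d$ containing one conjugate of each $\tau_k$ need not be transitive (for $d=3$ both conjugates could equal $(12)$), so ``the pull-back monodromy is again $S_d$'' does not follow from what you wrote. What is actually needed is that $\bar K(Q)$ is algebraically closed in $L_0=\bar K(x,y_2,\dots,y_n)$: then $L_0$ is linearly disjoint over $\bar K(Q)$ from the splitting field of $\psi(t)+Q$, and the full $S_d$ survives the base change, giving irreducibility. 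This regularity does follow from your square-freeness fact (if $Q=r(\beta)$ with $\deg r\ge 2$ and $\beta$ a polynomial, a critical value $\gamma$ of $r$ makes $Q-\gamma$ non-square-free; and a short units argument shows $\beta$ can be taken polynomial), but that step is missing from your sketch. Once it is supplied your route works, though it is considerably heavier than the paper's appeal to Lemma~\ref{irred}.
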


\begin{proof}
In Corollary \ref{mulem2col}, we will prove the first claim for
\begin{equation} \label{ghl}
G = (F - y^{*d}, y, z - H),
\end{equation}
where $H \in K[x,y]^n$ is arbitrary. The proof of this theorem is a multidimensional variation
of the proof of Theorem \ref{chKeller}, where Corollary \ref{mulem2col} is used by way of
\eqref{ghl} instead of Corollary \ref{lambdacol} by way of \eqref{gh}.
\end{proof}

\noindent
i) of Theorem \ref{bakext} below is a generalization of \cite[Th.\@ 2.2]{MR2607856} by Bakalarski.
In ii) of Theorem \ref{bakext}, \eqref{symred} with $u = \I$ and $u' = -\I$ corresponds to the
gradient reduction of the JC in \cite{MR2138860}. Indeed, if $F = x + H$, $f = - x\tp \cdot H(x)$,
and $f_H$ is as in Equation (3) of \cite{MR2138860} (i.e.\@ $f_H = -i y\tp \cdot H(x+\I y)$), then
\begin{align*}
\frac12 \sum_{i=1}^n x_i^2 &+ \frac12 \sum_{i=1}^n y_i^2 + f_H \\
&= \frac12 (x - \I y)\tp \cdot (x + \I y) - i y\tp \cdot H(x+\I y) \\
&= \frac12 (x - \I y)\tp \cdot \big( (x + \I y) + H(x + \I y) \big)
   - \frac12 (x + \I y)\tp \cdot H(x + \I y) \\
&= \frac12 (x - \I y) F(x+\I y) + \frac12 f(x + \I y),
\end{align*}
of which the gradient map is $\frac12 G$, where $G$ is as
in \eqref{symred} with $u = \I$ and $u' = -\I$. 

Taking $u = 1$ and $u' = -1$ in \eqref{symred} gives a
gradient reduction of the JC that does not require imaginary units, and $-\frac12 G$ has 
linear part $(-x,y)$ in that case, provided $F$ has linear part $x$ and $f$ has a trivial 
quadratic part. This linear part is also the result of the gradient reduction 
of the JC in \cite[Th.\@ 3.1 (i)]{MR2208537} (for cubic homogeneous maps). 
The reduction in \cite[Th.\@ 3.1 (i)]{MR2208537} is 
however slightly different, because it still requires a certain square root in $K$, 
namely $\sqrt{2}$.

\begin{theorem} \label{bakext}
Assume that $G$ is a Keller map with a symmetric Jacobian over $K$.
Write $G_i^{(1)}$ for the linear part of $G_i$.
\begin{enumerate}

\item[i)] If $G \in K[x]^n$, then $G_i$ (${+}\;c$) is irreducible (for all $c \in K$) if
          $\parder{}{x_i} G_i^{(1)} \ne 0$ and $G_i - G_i^{(1)}$ is (cubic) homogeneous.

\item[ii)] If $G \in K[x,y]^{2n}$, then $G_i$ is irreducible for all $i$ if
           $G$ is of the form
           \begin{equation}
           G = \grad_{x,y} \big(f(x+uy) + (x+u'y)\tp F(x+uy)\big), \label{symred}
           \end{equation}
           where $u,u' \in K$ such that $u \ne 0$, $f \in K[x]$ and $F_i \in K[x]$ for all $i \le n$.
           Furthermore, $u \ne u'$ and $F$ is a Keller map in this case,
           and additionally $F$ is invertible, if and only if $G$ is.

\end{enumerate}
\end{theorem}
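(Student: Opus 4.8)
The plan is to prove part ii) first, as it is essentially self-contained, and then part i), which generalizes Bakalarski's argument. For ii), I would begin by observing that $u\neq u'$: writing $G=(G_x,G_y)$ for the first and last $n$ components, \eqref{symred} gives $uG_x-G_y=(u-u')F(x+uy)$, so $u=u'$ would make the last $n$ rows of $\jac G$ equal to $u$ times the first $n$, forcing $\det\jac G=0$ against the Keller condition. Since $\det\left(\begin{smallmatrix}1&u\\1&u'\end{smallmatrix}\right)=u'-u\ne0$, the substitution $(x,y)\mapsto(w,v):=(x+uy,\,x+u'y)$ is an invertible linear change of coordinates, under which the potential $f(x+uy)+(x+u'y)\tp F(x+uy)$ becomes $\psi(w,v)=f(w)+v\tp F(w)$; hence $G$ is linearly conjugate to $\tilde G:=\grad_{w,v}\psi=\bigl(\grad f(w)+\jac F(w)\tp v,\;F(w)\bigr)$. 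From the block shape of $\jac\tilde G$, whose lower right $n\times n$ block is zero, one reads off $\det\jac\tilde G=(-1)^n(\det\jac F(w))^2$, so $G$, hence $\tilde G$, being Keller forces $\det\jac F\in K^{*}$, i.e.\@ $F$ is a Keller map. Moreover $\tilde G$ is invertible iff $F$ is: if $F$ is invertible then $\tilde G^{-1}(a,b)=\bigl(F^{-1}(b),\,(\jac F(F^{-1}(b))\tp)^{-1}(a-\grad f(F^{-1}(b)))\bigr)$ is polynomial since $(\jac F)^{-1}=(\det\jac F)^{-1}\operatorname{adj}\jac F$ is; conversely, if $\tilde G$ is invertible then $b\mapsto(\text{first block of }\tilde G^{-1})(0,b)$ is a polynomial section of $F$, and an \'etale endomorphism of affine space admitting a section is an isomorphism.

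For the irreducibility in ii), note that in the coordinates $(w,v)$ each component of $G$ equals, up to the linear change above, either $\tilde G_{w,j}+\tilde G_{v,j}$ or $u\tilde G_{w,j}+u'\tilde G_{v,j}$, that is, $\bigl(\grad f(w)\bigr)_j+F_j(w)+\sum_{k}\bigl(\parder{}{w_j}F_k(w)\bigr)v_k$ or $u\bigl(\grad f(w)\bigr)_j+u'F_j(w)+u\sum_{k}\bigl(\parder{}{w_j}F_k(w)\bigr)v_k$. Both are affine in $v$ of $v$-degree exactly one, because the $j$-th column of $\jac F$ is nonzero (else $\det\jac F=0$) and $u\ne0$. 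If such a polynomial admits a nontrivial factorization over $\bar K$, one factor $Q$ must lie in $\bar K[w]$; comparing coefficients of $v_1,\dots,v_n$ shows $Q$ divides every entry of the $j$-th column of $\jac F$, hence $Q\mid\det\jac F\in K^{*}$, so $Q$ is constant. Thus every component of $G$ is irreducible over $\bar K$, and a fortiori over $K$.

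For part i), since $\jac G$ is symmetric, Poincar\'e's lemma gives $G=\grad g$ with $\jac G=\hess g$ and $\det\hess g\in K^{*}$. Put $l:=G_i^{(1)}$ (a nonzero linear form with $\parder{}{x_i}l\ne0$) and $h:=G_i-l$; I would first dispose of the case $h=0$ or $\deg h\le1$, where $G_i$ is affinely linear and nonconstant (nonconstant because $\grad G_i$ is the $i$-th column of the invertible $\hess g$), hence irreducible, and assume henceforth $d:=\deg h\ge2$. Two consequences of the Keller condition are used: $G_i=y_i(G)$ is square-free by the Corollary above (as $y_i$ is), and no factorization $G_i=pq$ can have $p,q$ vanishing simultaneously on $\bar K^n$, since $\grad G_i=q\grad p+p\grad q$ is the $i$-th column of $\hess g$. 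Now assume $G_i=pq$ nontrivially, normalized so $p(0)=0$, $q(0)\ne0$; restricting to a generic line $t\mapsto t\xi$ gives $\tfrac1t p(t\xi)\cdot q(t\xi)=l(\xi)+h(\xi)t^{d-1}$, a polynomial in $t$ with nonzero terms only in degrees $0$ and $d-1$. Writing its roots as $\mu(\xi)\zeta$ with $\zeta^{d-1}=1$ and $\mu^{d-1}=-l/h$, the coefficients of $q(t\xi)$ are scalar multiples of $\mu(\xi)^{-k}$; since $\gcd(l,h)=1$ (as $l$ is irreducible) would be violated if any $\mu^{-k}$ with $(d-1)\nmid k$ were rational in $\xi$ and $l\nmid h$, one concludes $l\mid h$ and $\deg p=1$, so $p$ is a scalar multiple of $l$ (if instead $l\nmid h$, $q$ would be forced constant). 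Writing $h=lr$ and using that $\{l=0\}$ and $\{q=0\}$ are disjoint with $q=1+r$ up to scalar, $q|_{\{l=0\}}=1+r|_{\{l=0\}}$ is nowhere zero; as a nonzero homogeneous form of positive degree over $\bar K$ is surjective, $r|_{\{l=0\}}\equiv0$, i.e.\@ $l\mid r$ and $l^2\mid h$.

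Thus a nontrivial factorization would force $G_i=l(1+ls)$ with $h=l^2s$, $s$ homogeneous of degree $d-2$. For $d=2$ this already contradicts the Keller condition: $s$ is a nonzero constant, the $i$-th column of $\hess g$ is $(1+2ls)\grad l$ with $\grad l$ constant, so $\det\hess g$ is divisible by the nonconstant polynomial $1+2ls$ and cannot be a nonzero constant. For $d\ge3$ the hard part is to push the analysis of $\det\hess g$ (expanded along column $i$, namely $(1+2ls)\grad l+l^2\grad s$) far enough to reach the same contradiction, iterating the line-restriction and hypersurface arguments above; in the cubic case $d=3$, which also yields the parenthetical ``${+}\,c$'' statement, one may instead invoke the forthcoming Corollary~\ref{irredcor}, since $G_i+c$ then has nonzero terms only in degrees $\{0,1,3\}$. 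This last step is the crux of part i) and the only place the symmetry of $\jac G$ enters essentially; that it is genuinely needed is shown by Theorem~\ref{lindiv}, where the analogous conclusion fails for non-symmetric Keller maps.
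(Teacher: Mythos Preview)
Your treatment of part ii) is correct and follows essentially the same route as the paper: the linear change $(w,v)=(x+uy,x+u'y)$ reduces to $\tilde G=\grad_{w,v}(f(w)+v\tp F(w))$, the block shape gives $\det\jac\tilde G=(-1)^n(\det\jac F)^2$, and irreducibility follows because each component is of $v$-degree one with $v$-coefficients forming a column of $\jac F$, hence having trivial gcd. The paper packages the last two steps as Lemma~\ref{symm} and Lemma~\ref{irredAy}, but your direct argument is the same content. (One small point: your converse invertibility argument appeals to ``an \'etale endomorphism with a section is an isomorphism''; this is true but the paper avoids it by exhibiting $\tilde G\bigl(x,((\jac F)\tp)^{-1}(y-\grad f)\bigr)=(y,F)$, which makes the equivalence immediate.)

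Part i), however, has a genuine gap. Your line-restriction argument (once disentangled) does recover the conclusions of Corollary~\ref{irredcor}, namely $l\mid G_i$ and $l^2\mid h$, so that $G_i=l(1+ls)$ with $s$ homogeneous of degree $d-2$; and your $d=2$ case is fine (indeed it does not need symmetry). But for $d\ge3$ you stop at ``the hard part is to push the analysis of $\det\hess g$ \ldots\ iterating the line-restriction and hypersurface arguments above'', without supplying any such iteration, and it is not clear what iterating those arguments would even mean here: they were about factoring $G_i$, not about the structure of $\det\hess g$. Invoking Corollary~\ref{irredcor} for $d=3$ does not close the gap either; that corollary only reproduces $l^2\mid h$ and $c=0$, which you already have.

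What is missing is precisely the step where symmetry enters. The paper's Lemma~\ref{symdiag} (via Corollary~\ref{symdiagcol}) does this: after conjugating so that $l=x_i$, one writes $G_i=c'x_i+x_i^r\tilde h+c$ with $x_i\nmid\tilde h$ and uses $G=\grad g$ to pin down $f\bmod x_i^{r+2}$. The point is that symmetry forces the $(i,i)$ entry of $\hess g$ to be the \emph{only} entry carrying terms of $x_i$-degree between $1$ and $r-1$; one then isolates a single nonzero coefficient of $\det\hess g$ of positive degree, contradicting $\det\hess g\in K^{*}$. Without symmetry the off-diagonal entries in column $i$ are unconstrained and the expansion you propose cannot be controlled. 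You correctly identify this as ``the only place the symmetry of $\jac G$ enters essentially'', but you still need to \emph{supply} the argument; Theorem~\ref{lindiv}, which you cite, does not do this for you.
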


\begin{listproof} 
\begin{enumerate}

\item[i)] Suppose that $G \in K[x]^n$ and $c \in K$ such that $G_i + c$ is reducible,
$\parder{}{x_i} G_i^{(1)} \ne 0$ and $G_i - G_i^{(1)}$ is homogeneous of degree $d$. Then $d \ge 2$.
In order to prove i) both with and without the parenthesized parts, it suffices
to obtain a contradiction in the case where either $d \le 3$ or $c = 0$. We shall derive a contradiction
by showing that $\deg G_i = 1$ in both subcases.

Since $G$ is a Keller map,
we see by expansion of the Jacobian determinant of $G$ along the $i$-th row that $\jac G_i$
is unimodular.
In 1) $\Longrightarrow$ 3) of Corollary \ref{irredcor}, we will show that
$(G_i^{(1)})^2 \mid G_i - G_i^{(1)}$ if $G_i - G_i^{(1)}$ is homogeneous of degree
$d \ge 2$ and $\jac G_i$ is unimodular, and either $d \le 3$ or $c = 0$.
In Corollary \ref{symdiagcol}, we will show that $\deg G_i = 1$ if
$\parder{}{x_i} G_i^{(1)} \ne 0$, $(G_i^{(1)})^2 \mid G_i - G_i^{(1)}$ and
$G$ is a Keller map with a symmetric Jacobian. Hence $\deg G_i = 1$.

\item[ii)] Suppose that $G \in K[x,y]^{2n}$ is as in \eqref{symred}.
We can rewrite \eqref{symred} as $G = \grad_{x,y} \big((f + y\tp F)|_{(x,y) = (x + u y, x + u' y)} \big)$.
Hence the chain rule for $\grad_{x,y} = \jac_{x,y} \tp$ tells us that
$$
G = \left( \begin{array}{cc} I_n & I_n \\  u I_n & u' I_n \end{array} \right)
    \big(\grad_{x,y} (f + y\tp F)\big)\big|_{(x,y) = (x + u y, x + u' y)}.
$$
In Lemma \ref{symm}, we will show that $F$ is invertible, if and only if $\big(\grad_{x,y} (f + y\tp F)\big)$
is, and that $\det \hess_{x,y} (f + y\tp F) = (-1)^n (\det \jac F)^2$.
Since $G$ is a Keller map, we see that $u \ne u'$ and that $F$ is a Keller map.

Again in Lemma \ref{symm}, we will show that $\mu\tp \grad_{x,y} (f + y\tp F)$ is irreducible
for all $\mu \in K^{2n}$ such that $\mu_i \ne 0$ for some $i \le n$, provided $F$ is a Keller map.
Thus $G_i$ is irreducible for all $i \le n$. By assumption of $u \ne 0$,
$G_i$ is irreducible for all $i > n$ as well. \qedhere

\end{enumerate}
\end{listproof}

\noindent
We end this section by showing that for components $F_i$ of power linear Keller maps over $K$,
$F_i - c$ is irreducible for all $c \in K$.

\begin{proposition} \label{prop1}
Assume that $f \in K[x]$, such that $\deg f \ne 1$ and
$h := f - x_i$ is a polynomial in a linear form. If $h \in K[x_i]$, then $f$ is not a component
of a Keller map. If $h \notin K[x_i]$, then $f$ is a tame coordinate.
\end{proposition}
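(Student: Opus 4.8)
The plan is to prove the two implications separately; both are elementary. Write $h = p(\ell)$, where $p \in K[t]$ is a univariate polynomial and $\ell$ is a polynomial of degree at most $1$ in $x$, and keep the standing hypothesis $\deg f \ne 1$ in mind.

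First I would dispose of the case $h \in K[x_i]$. Then $f = x_i + h$ lies in $K[x_i]$, say $f = q(x_i)$ with $\deg q = \deg f \ne 1$, so $\parder{}{x_i} f = q'(x_i)$ while $\parder{}{x_j} f = 0$ for $j \ne i$. If $f$ were a component, say the $m$-th one, of a Keller map $F \in K[x]^n$, then the $m$-th row of $\jac F$ would be $q'(x_i)$ in column $i$ and zero in every other column. Cofactor expansion of $\det \jac F$ along that row gives $\det \jac F = \pm\, q'(x_i) \cdot M$ for some $M \in K[x]$; since $\det \jac F \in K^{*}$ and $K[x]$ is an integral domain, both factors must be nonzero constants, so $q'(x_i) \in K^{*}$ and hence $\deg q = 1$ --- a contradiction. (Equivalently, one could invoke Derksen's result recalled in the introduction: a component of a Keller map over $\bar{K}$ has a nowhere-vanishing gradient, while here $\grad f$ is $q'(x_i)$ times a fixed coordinate vector and hence vanishes somewhere over $\bar{K}$ unless $\deg q = 1$.) The degenerate possibility $\deg f = 0$ needs no separate treatment, since then $h = f - x_i$ has degree $1$ and lies in $K[x_i]$, so it falls under this case.

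Next, for the case $h \notin K[x_i]$, I would exhibit a tame automorphism having $f$ as a component. Since $p(\ell) = h \notin K[x_i]$, the form $\ell$ cannot lie in $K[x_i]$, so it has a nonzero coefficient $a_k$ on some variable $x_k$ with $k \ne i$; fix such a $k$. Consider $F \in K[x]^n$ given by $F_i := f = x_i + p(\ell)$, $F_k := \ell$, and $F_j := x_j$ for $j \notin \{i,k\}$, and factor it as $F = E_2 \circ E_1$: here $E_1$ is the affine automorphism that replaces the $k$-th coordinate by $\ell$ and fixes the others --- invertible precisely because $a_k \ne 0$, with affine inverse and $\det \jac E_1 = a_k$ --- and $E_2$ is the elementary automorphism that adds $p(y_k)$ to the $i$-th coordinate and fixes the others, whose inverse simply subtracts $p(y_k)$. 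A direct check that $E_2 \circ E_1$ carries $x_i + p(\ell)$ into slot $i$, $\ell$ into slot $k$, and $x_j$ into the remaining slots identifies $F$ with this tame automorphism, so $f = F_i$; composing $F$ with the (tame) transposition of the $i$-th and the first coordinate then realizes $f$ as the first component of a tame automorphism, which is precisely the assertion that $f$ is a tame coordinate. As a byproduct, $\det \jac F = a_k \in K^{*}$, so $F$ is moreover a Keller map.

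I do not expect a substantive obstacle: the work lies essentially in recognizing the right explicit maps. The only mild care points are the conventions surrounding the phrase ``polynomial in a linear form'' --- whether or not a constant term in $\ell$ is permitted affects neither argument --- and keeping the roles of the indices $i$ and $k$ straight in the two auxiliary automorphisms $E_1$ and $E_2$.
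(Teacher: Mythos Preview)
Your proof is correct and follows essentially the same approach as the paper's. For the case $h \in K[x_i]$, both you and the paper observe that $\jac f$ fails to be unimodular (you spell out the cofactor expansion; the paper states it in one line); for the case $h \notin K[x_i]$, both exhibit $f$ as a component of an elementary map precomposed with an invertible affine/linear map --- the paper uses a general $T \in \GL_n(K)$ with $T_1 x = x_i$ and $T_2 x = \ell$, while you use the more explicit map that only replaces the $k$-th coordinate by $\ell$, but the underlying idea is identical.
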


\begin{proof}
If $h \in K[x_i]$, then $f$ cannot be a component of a Keller map because $\jac f$ is not unimodular.
So assume $h \notin K[x_i]$. Then there exists a
$T \in \GL_n(K)$ such that $T_1 x = x_i$ and $h \in K[T_2 x]$, say that $h = p(T_2 x)$.
It follows that $f$ is the first component
of the composition of the elementary invertible map $(x_1 + p(x_2), x_2, x_3, \ldots, x_n)$ and $T$.
\end{proof}

\noindent
Notice that i) of Theorem \ref{bakext} and Proposition \ref{prop1} are results about Keller maps of
some type, rather than subsequent reductions to obtain irreducibility results. More results of this type
are Theorem \ref{irredlc} and its corollary, and Theorems \ref{irredth} and \ref{symth}.

\section{Irreducibility lemmas for polynomials with unimodular gradients} \label{irlemu}

\begin{lemma} \label{nonsingular}
Let $f \in \bar{K}[x]$ such that $\deg f = d \ge 2$. Suppose that $f$ has monomials of degrees 
$0, 1, d$ only, and that $x_1 - c \mid f$ for some $c \in \bar{K}^{*}$. If $f$ is nonsingular, then
$$
f = c'(x_1^d - c^d) + (c'' - c' d c^{d-1})(x_1 - c)
$$
for some $c',c'' \in \bar{K}^{*}$.
\end{lemma}

\begin{proof}
Since $f$ is nonsingular, we have 
$f = \big(g\cdot(x_1 - c) + c''\big) \cdot (x_1 - c)$ 
for some $g \in \bar{K}[x]$ and $c'' \in \bar{K}^{*}$. Hence 
$$
h := g \cdot (x_1-c)^2 = f - c''(x_1-c)
$$ 
has monomials of degrees $0, 1, d$ only. Let $h'$ be the derivative of $h$ with respect to $x_1$, and take
$c' \in \bar{K}$ such that the constant part of $h'$ is equal to $-c' d c^{d-1}$ (if $h'$ has one,
otherwise take $c' = 0$). Since $h'$ has only monomials of degree $0$ and $d-1$, we deduce from 
$x_1 - c \mid h'$ that $h'$ is completely determined by its constant part. More precisely, 
$$
h' = c' d (x_1^{d-1} - c^{d-1}), \qquad \mbox{so} \qquad
h = c' x_1^d - c' d c^{d-1} x_1 + h|_{x_1 = 0}.
$$
Since $x_1 - c \mid h$ as well, it follows that $h|_{x_1 = 0}$ is completely determined by the 
other monomials of $h$, i.e.\@ the monomials of $h$ whose degree with respect to $x_1$ is positive. 
More precisely,
$$
h =  c' (x_1^d - c^d) - c' d c^{d-1}(x_1 - c).
$$
By definition of $h$, $f$ is as claimed, where $c' \ne 0$ because $\deg f \ge 2$.
\end{proof}

\noindent
From now on in this section, we shall write $f^{(k)}$ and $g^{(k)}$ for the homogeneous part of degree $k$ of
$f$ and $g$ respectively, or zero if $f$ or $g$ respectively has no such part.

\begin{lemma} \label{x_1|flem}
Let $f \in K[x]$ be nonzero. Assume that $f = gh$ is a
polynomial decomposition, such that $h(0) \ne 0$. Take $g^{*} \in K[x]$.

If $g^{*} \mid f^{(0)}, f^{(1)}, \ldots, f^{(\deg g)}$, then $g^{*} \mid g$.
\end{lemma}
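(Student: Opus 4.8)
The plan is to argue by contradiction: suppose $g^{*} \mid f^{(0)}, f^{(1)}, \ldots, f^{(\deg g)}$ but $g^{*} \nmid g$. Since $f = gh$ and we work in the UFD $K[x]$, there is an irreducible $p$ and an exponent $e \geq 1$ such that $p^e \mid g^{*}$ but $p^e \nmid g$; write $g = p^a g_0$ with $p \nmid g_0$ and $a < e \leq a + (\text{ord}_p g^{*})$. Actually the cleanest route is to reduce to the case $g^{*} = p^e$ irreducible-power and even to $g^{*} = p$: it suffices to show that if $p$ is irreducible and $p \mid f^{(0)}, \ldots, f^{(\deg g)}$ then $p \mid g$, because then one can iterate (pass to $f/p$, noting $\deg(g/p) \leq \deg g$, so the hypothesis $p^2 \mid f^{(0)},\ldots,f^{(\deg g)}$ still gives enough low-degree divisibility for $g/p$), and then combine over the distinct prime factors of $g^{*}$.

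So fix an irreducible $p$ with $p \mid f^{(k)}$ for all $k \leq \deg g$; I want $p \mid g$. Suppose not, so $p \nmid g$ and hence, since $h(0) \neq 0$ means $h \neq 0$ and we may as well ask whether $p \mid h$. The key observation is to look at the lowest-degree part. Let $m = \ldeg g$ (the smallest degree of a nonzero homogeneous component of $g$); then $f^{(k)} = \sum_{i+j=k} g^{(i)} h^{(j)}$, and since $h^{(0)} = h(0) \neq 0$ the lowest nonzero component of $f$ is $f^{(m)} = g^{(m)} h^{(0)}$, with $g^{(m)} \neq 0$. More generally, I will show by induction on $k$ that $p \mid g^{(m)}, g^{(m+1)}, \ldots, g^{(k)}$ for all $k \leq \deg g$, which in particular gives $p \mid g^{(\deg g)} = g$ (the top component equals $g$ when... no — rather, summing gives $p \mid g$ since all components are divisible). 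The induction: for the base case, $p \mid f^{(m)} = g^{(m)} h^{(0)}$ and $p$ is prime and $p \nmid h^{(0)}$ (a nonzero constant), so $p \mid g^{(m)}$. For the inductive step at degree $k \leq \deg g$, we have $f^{(k)} = g^{(k)} h^{(0)} + \sum_{j \geq 1} g^{(k-j)} h^{(j)}$; every term in the sum has $k - j < k$, hence $k - j \leq \deg g$, so by the inductive hypothesis $p \mid g^{(k-j)}$ for those terms with $k - j \geq m$ (and $g^{(k-j)} = 0$ for $k-j < m$), whence $p$ divides the sum; combined with $p \mid f^{(k)}$ this gives $p \mid g^{(k)} h^{(0)}$, and again $p \nmid h^{(0)}$ forces $p \mid g^{(k)}$. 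Running $k$ up to $\deg g$ shows $p$ divides every homogeneous component of $g$, i.e.\ $p \mid g$, a contradiction.

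The main obstacle — really the only subtlety — is bookkeeping the degrees: one must be careful that in the convolution $f^{(k)} = \sum g^{(i)} h^{(j)}$ the "cross terms" $g^{(i)} h^{(j)}$ with $j \geq 1$ involve only $i = k - j < k \leq \deg g$, so that the divisibility hypothesis on $f^{(0)}, \ldots, f^{(\deg g)}$ is exactly strong enough (one never needs information about $f$ in degrees above $\deg g$, which is why the hypothesis is stated with that cutoff). The role of $h(0) \neq 0$ is twofold: it makes $h^{(0)}$ a unit-like nonzero constant so that primality of $p$ peels it off, and it pins down the lowest-degree behavior so the induction has a clean base case. After establishing the irreducible-prime case, the reduction to general $g^{*}$ is routine: factor $g^{*} = \prod p_t^{e_t}$, and for each $t$ apply the prime case $e_t$ times, at step $s$ using that $p_t^{s} \mid g^{*} \mid f^{(k)}$ for $k \leq \deg g \geq \deg(g/p_t^{s-1})$, concluding $p_t^{e_t} \mid g$ for each $t$ and hence $g^{*} \mid g$.
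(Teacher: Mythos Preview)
Your core induction---showing that $g^{*}$ (or $p$) divides each homogeneous component $g^{(k)}$ in turn, using that the cross terms in $f^{(k)}=\sum_{i+j=k}g^{(i)}h^{(j)}$ involve only $g^{(i)}$ with $i<k$ and that $h^{(0)}=h(0)\ne 0$ lets one peel off the factor---is exactly the paper's argument. The paper, however, runs this induction \emph{directly with $g^{*}$}, not with an irreducible $p$: since $K$ is a field, $h(0)\in K^{*}$ is a unit in $K[x]$, so from $g^{*}\mid g^{(k)}h(0)$ one gets $g^{*}\mid g^{(k)}$ with no appeal to primality whatsoever. Your reduction to the prime case is therefore unnecessary.

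More to the point, that detour is where your proposal develops a gap. The iteration step (``pass to $f/p$, noting $\deg(g/p)\le\deg g$, so the hypothesis $p^{2}\mid f^{(0)},\ldots,f^{(\deg g)}$ still gives enough low-degree divisibility for $g/p$'') is asserted but not argued: when $p$ is not homogeneous, the homogeneous components of $f/p$ are related to those of $f$ by a convolution with the homogeneous components of $p$, and it is not immediate that $p^{2}\mid f^{(k)}$ for $k\le\deg g$ forces $p\mid (f/p)^{(k)}$ for $k\le\deg(g/p)$. This can be repaired with some extra work, but the simplest fix is to drop the prime reduction entirely and observe that your own induction already proves the lemma verbatim with $g^{*}$ in place of $p$.
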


\begin{proof}
Notice that $g^{*} \mid f^{(0)} = g^{(0)} h(0) \mid g^{(0)}$.
Suppose that $g^{*} \mid g^{(0)}, g^{(1)}, \ldots, \allowbreak g^{(i)}$ for some $i < \deg g$. Since
$g^{*} \mid f^{(i+1)}$, we obtain by expressing $f^{(i+1)}$ in the homogeneous parts of
$g$ and $h$ that $g^{*} \mid g^{(i+1)} h(0) \mid g^{(i+1)}$.
By induction on $i$, $g^{*} \mid g^{(0)}, g^{(1)}, \ldots, g^{(\deg g)}$,
so $g^{*} \mid g^{(0)} + g^{(1)} + \cdots + g^{(\deg g)} = g$.
\end{proof}

\begin{corollary} \label{x_1|f}
Assume $f \in K[x]$ such that $f - a x_1$ is
homogeneous of degree $d \ge 2$ for some nonzero $a \in K$. If $f$ is reducible, then
$x_1 \mid f$.
\end{corollary}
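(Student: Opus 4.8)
The plan is to apply Lemma \ref{x_1|flem} with a well-chosen $g^{*}$, exploiting the fact that $f$ has only two nonzero homogeneous components: $f^{(1)} = a x_1$ and $f^{(d)} = f - a x_1$. Suppose $f = gh$ is a nontrivial decomposition. Since $f^{(0)} = 0$, at least one of $g, h$ has zero constant term; since $f$ has no terms of degree $> d$, we have $\deg g, \deg h \le d$, and since $\deg f = d$ and $f^{(1)} = a x_1 \ne 0$, the valuation (lowest degree) of $f$ is $1$, so exactly one of $g(0), h(0)$ is zero. Relabelling, assume $h(0) \ne 0$ and $g(0) = 0$, so $\ldeg g \ge 1$; moreover $\deg g \ge 1$ since the decomposition is nontrivial (if $\deg g = 0$ then $g \in K$, contradicting $g(0) = 0$).

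First I would treat the case $\deg g = 1$ separately: then $g = g^{(1)}$ is a nonzero linear form, and I want to show $g^{(1)}$ is a scalar multiple of $x_1$. We have $f^{(1)} = g^{(1)} h(0)$, so $g^{(1)} h(0) = a x_1$, giving $g^{(1)} = \big(a/h(0)\big) x_1$; hence $x_1 \mid g \mid f$ and we are done. So assume $\deg g \ge 2$. Now I would apply Lemma \ref{x_1|flem} with $g^{*} = x_1$. We need $x_1 \mid f^{(k)}$ for $k = 0, 1, \ldots, \deg g$. For $k = 0$ this is trivial since $f^{(0)} = 0$. For $k = 1$, $f^{(1)} = a x_1$ is divisible by $x_1$. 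For $2 \le k \le \deg g$: since $\deg g \le d$, either $k < d$, in which case $f^{(k)} = 0$ (as $f$ has terms only in degrees $1$ and $d$), or $k = d$, which forces $\deg g = d$ and hence $\deg h = 0$, i.e.\ $h \in K^{*}$ — but then $f = gh$ with $h$ constant, so $g = h^{-1} f$ has the same nonzero homogeneous components as $f$, and in particular $g(0) = h^{-1} f^{(0)} = 0$ while $g^{(1)} = h^{-1} a x_1 \ne 0$; this is the $\ldeg f = 1$ situation again, and $f^{(d)} = h\, g^{(d)}$, so it suffices to know $x_1 \mid g^{(d)}$, which is exactly what Lemma \ref{x_1|flem} would give once we verify the hypotheses for degrees up to $\deg g - 1 = d - 1 < d$, all of which vanish.

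Let me streamline: the cleanest route is to observe that after relabelling so $h(0) \ne 0$, we have $\deg g \le d$ and we apply Lemma \ref{x_1|flem} with $g^{*} = x_1$. The required divisibilities $x_1 \mid f^{(0)}, f^{(1)}, \ldots, f^{(\deg g)}$ hold because $f^{(0)} = 0$, $f^{(1)} = a x_1$, and $f^{(k)} = 0$ for all $k$ with $2 \le k \le \deg g$ \emph{except} possibly $k = d$; and $k = d$ can only occur as one of these indices when $\deg g = d$, forcing $h \in K^{*}$, in which case we may instead apply the lemma to $f = (h^{-1} g)\cdot h$ — no, more simply: if $\deg g = d$ then $\deg h = 0$ so $h \in K^{*}$, and then $x_1 \mid f \iff x_1 \mid g$, and we can rerun the argument with the roles swapped, replacing $g$ by the genuinely smaller-degree factor; since a nontrivial decomposition into two constants is impossible, iterating lands us in a case with $\deg g < d$ where every $f^{(k)}$, $0 \le k \le \deg g$, vanishes except $f^{(1)} = a x_1$, and Lemma \ref{x_1|flem} yields $x_1 \mid g \mid f$.

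The main obstacle is the bookkeeping around which of the two factors carries the constant term and the borderline case $\deg g = d$ (equivalently $h$ constant): one must avoid a circular application of Lemma \ref{x_1|flem} at the index $k = d$. I expect this is handled by first reducing, via pulling out the unit, to a strict decomposition $f = gh$ with $1 \le \deg g \le d-1$ and $h(0) \ne 0$ (possible precisely because $f$ is reducible and $\ldeg f = 1$, so one irreducible factor is a non-unit non-scalar with nonzero constant term, or one collects all the lower-degree factors), after which all of $f^{(0)}, \ldots, f^{(\deg g)}$ except $f^{(1)} = a x_1$ are zero and Lemma \ref{x_1|flem} applies verbatim. Everything else is routine.
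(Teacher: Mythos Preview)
Your approach is correct and coincides with the paper's: pick a decomposition $f = gh$ with $h(0)\ne 0$ and $\deg g \le d-1$, then apply Lemma~\ref{x_1|flem} with $g^{*}=x_1$. The detours through the cases $\deg g = 1$ and $\deg g = d$ are unnecessary---once both factors in the nontrivial decomposition are non-units, $\deg h \ge 1$ forces $\deg g \le d-1$ outright, so every $f^{(k)}$ with $0\le k\le \deg g$ is either $0$ or $a x_1$ and the lemma applies immediately.
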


\begin{proof}
Suppose that $f$ is reducible. Then we can decompose $f = gh$ such that $h(0) \ne 0$ and
$\deg g \le d-1$. From Lemma \ref{x_1|flem} with $g^{*} = x_1$, we obtain that $x_1 \mid g \mid f$.
\end{proof}

\begin{corollary} \label{irredcor}
Assume that
$f \in K[x]$ has monomials of degree $0,1,d$ only, where $d \ge 2$,
say that $f = f^{(0)} + f^{(1)} + f^{(d)}$. Suppose that $f$ is reducible and $\jac f$ is unimodular. 
Then for
\begin{enumerate}

\item[1)] $d \le 3$ or $f^{(0)} = 0$,

\item[2)] $f$ has a divisor of degree $1$,

\item[3)] $f^{(0)} = 0$, $f^{(1)} \mid f$ and $(f^{(1)})^2 \mid f^{(d)}$,

\end{enumerate}
we have 1) $\Longrightarrow$ 2)  $\Longrightarrow$ 3).
\end{corollary}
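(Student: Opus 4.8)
The plan is to establish the chain $1)\Longrightarrow 2)\Longrightarrow 3)$, in each step passing---after a \emph{homogeneous} linear change of coordinates, which preserves the degree decomposition of $f$, reducibility, unimodularity of $\jac f$, and all divisibility assertions in $1)$--$3)$---to the one-variable-type results already available, namely Corollary~\ref{x_1|f} and Corollary~\ref{nonsingular}. I would note at the outset that, since $\jac f$ is unimodular, $f$ is nonconstant, and since $f$ is reducible, $\deg f\ge 2$; hence $f^{(d)}\ne 0$ and $\deg f=d$.

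For $1)\Longrightarrow 2)$ I would distinguish the two clauses. If $d\le 3$, take a proper factorization $f=g_0h_0$ with $\deg g_0,\deg h_0\ge 1$; the degrees add to $d\le 3$, so one factor is linear and $f$ has a degree-$1$ divisor. If instead $f^{(0)}=0$, then $f=f^{(1)}+f^{(d)}$ with $f^{(1)}\ne 0$ (otherwise $f=f^{(d)}$ is homogeneous of positive degree and $\jac f$ vanishes at the origin); after arranging $f^{(1)}=x_1$, the polynomial $f-x_1=f^{(d)}$ is homogeneous of degree $d\ge 2$, so Corollary~\ref{x_1|f} gives $x_1\mid f$.

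For $2)\Longrightarrow 3)$ I would take a degree-$1$ divisor $\ell=\ell^{(0)}+\ell^{(1)}$ of $f$ with $\ell^{(1)}\ne 0$ and, after arranging $\ell^{(1)}=x_1$, write $\ell=x_1-c$ with $c\in K$. The first move is to rule out $c\ne 0$: unimodularity makes $f$ nonsingular, so Corollary~\ref{nonsingular} would force $f$ to be a polynomial in $x_1$ only of degree $d\ge 2$, and then $\parder{}{x_j}f=0$ for $j\ge 2$ while $\parder{}{x_1}f$ is nonconstant, contradicting unimodularity of $\jac f$. Hence $c=0$, i.e.\ $x_1\mid f$, so $f^{(0)}=f(0)=0$. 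Writing $f=x_1g$, the absence of terms of degree $2,\ldots,d-1$ in $f$ together with $\deg g=d-1$ gives $g=g(0)+g^{(d-1)}$, and $g(0)\ne 0$ (else $\jac f$ vanishes at the origin); so $f^{(1)}=g(0)\,x_1$ is a nonzero scalar multiple of $x_1$, whence $x_1\mid f$ yields $f^{(1)}\mid f$, and $f^{(d)}=x_1g^{(d-1)}$.

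The remaining---and I expect hardest---point is $(f^{(1)})^2\mid f^{(d)}$, i.e.\ $x_1\mid g^{(d-1)}$. I would argue via the unimodular ideal: here $\parder{}{x_j}f=x_1\parder{}{x_j}g^{(d-1)}$ for $j\ge 2$ and $\parder{}{x_1}f=g(0)+g^{(d-1)}+x_1\parder{}{x_1}g^{(d-1)}$, so reducing the identity $\big(\parder{}{x_1}f,\ldots,\parder{}{x_n}f\big)=K[x]$ modulo $x_1$ leaves the ideal of $K[x_2,\ldots,x_n]$ generated by $g(0)+g^{(d-1)}|_{x_1=0}$, which is therefore all of $K[x_2,\ldots,x_n]$; thus $g(0)+g^{(d-1)}|_{x_1=0}\in K^{*}$, forcing the homogeneous polynomial $g^{(d-1)}|_{x_1=0}$ of degree $d-1\ge 1$ to vanish. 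Then $x_1\mid g^{(d-1)}$ and $(f^{(1)})^2=g(0)^2x_1^2\mid x_1g^{(d-1)}=f^{(d)}$, which finishes the argument.
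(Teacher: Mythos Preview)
Your proof is correct and follows essentially the same route as the paper: both split $1)\Rightarrow 2)$ into the trivial degree-bound case and an appeal to Corollary~\ref{x_1|f}, and both handle $2)\Rightarrow 3)$ by normalizing the linear divisor to $x_1-c$, ruling out $c\ne 0$ via Corollary~\ref{nonsingular}, and then using the unimodularity/nonsingularity of $\jac f$ along $\{x_1=0\}$ to force $x_1^2\mid f^{(d)}$. The only cosmetic difference is in this last step: the paper phrases it geometrically (``by the nonsingularity of $f$ over $\bar K$, $f=c'x_1(gx_1+1)$''), while you stay algebraic and reduce the unit ideal $(\partial_{x_1}f,\ldots,\partial_{x_n}f)=K[x]$ modulo $x_1$ to conclude that $g(0)+g^{(d-1)}|_{x_1=0}$ is a unit; these are the same observation.
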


\begin{proof}
Since $\jac f$ is unimodular, we have $f^{(1)} \ne 0$ and $f$ is nonsingular over $\bar{K}$.
Furthermore, $\deg f \ge 2$ because $f$ is reducible.

Since $2 \le \deg f \le d$, the case $d \le 3$ of 1) $\Longrightarrow$ 2) follows directly from the 
supposition that $f$ is reducible and the case $f^{(0)} = 0$ of 1) $\Longrightarrow$ 2) follows
from Corollary \ref{x_1|f}, because we may assume without loss of generality that $f^{(1)} = x_1$.

In order to prove 2)  $\Longrightarrow$ 3), suppose that $f$ has a divisor of degree $1$.
Without loss of generality, we may assume that
$x_1 - c \mid f$ for some $c \in K$. From $\deg f \ge 2$, we can subsequently deduce that
$(\parder{}{x_1} f~0~\cdots~0)$ is not unimodular. If $c \ne 0$, then
Lemma \ref{nonsingular} tells us that $\jac f = (\parder{}{x_1} f~0~\cdots~0)$ indeed, 
which contradicts that $\jac f$ is unimodular. So $c = 0$ and $x_1 \mid f$.
By the nonsingularity of $f$ over $\bar{K}$, we obtain that
$f = c'x_1(g x_1+1)$ for some $c' \in K^{*}$ and $g \in K[x]$. This gives 3).
\end{proof}

\begin{lemma} \label{irred}
Assume that $f \in \bar{K}[x]$
has degree at most $d$, and $g \in \bar{K}[y] \setminus \bar{K}$. Then for
\begin{enumerate}

\item[1)] $f - g$ is reducible,

\item[2)] $f - c$ is reducible for at least $d^2$ values of $c \in \bar{K}$,

\item[3)] $f \in \bar{K}[p^2,p^3]$ for some $p \in \bar{K}[x]$,

\item[4)] $\gcd\big\{\parder{}{x_1} f, \parder{}{x_2} f, \ldots, \parder{}{x_n} f\big\} \notin \bar{K}^{*}$,
\end{enumerate}
we have 1) $\Longrightarrow$ 2) $\Longrightarrow$ 3) $\Longrightarrow$ 4).
\end{lemma}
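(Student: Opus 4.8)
The plan is to prove the three implications separately, with essentially all of the difficulty concentrated in 2)$\Longrightarrow$3); the cases $d = 0, 1$ are degenerate and I treat them as trivial throughout.

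For 1)$\Longrightarrow$2), I would show that reducibility of $f - g$ forces $f - c$ to be reducible for \emph{every} value $c = g(b)$, and these form an infinite set since $g \notin \bar K$. Fix a nontrivial factorization $f - g = P_1 \cdots P_k$ in $\bar K[x,y]$ and regard $f - g$ as a polynomial in $x$ over $\bar K[y]$; its part of top $x$-degree is the (nonzero) leading form of $f$, which lies in $\bar K[x]$, so by unique factorization the top $x$-degree part of each $P_j$ also lies in $\bar K[x]$ and has positive $x$-degree $e_j$ (a $P_j$ lying in $\bar K[y]$ is excluded, since substituting one of its zeros would make $f$ constant in $x$). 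Hence each specialization $P_j|_{y=b}$ still has $x$-degree exactly $e_j \ge 1$, so $f - g(b) = \prod_j P_j|_{y=b}$ is a product of non-units; thus $f - c$ is reducible for all, in particular at least $d^2$, of the infinitely many values $c = g(b)$. For 3)$\Longrightarrow$4): if the chosen $p$ is constant then $f$ is constant and all partials vanish; otherwise $\bar K[p^2,p^3]$ is exactly the set of $v(p)$ with $v \in \bar K[t]$ and $v'(0) = 0$, and for such $v$ the polynomial $p$ divides $v'(p)$, so $p$ divides $\parder{}{x_j} f = v'(p)\,\parder{}{x_j} p$ for every $j$, hence divides the gcd of the partial derivatives.

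The core is 2)$\Longrightarrow$3). First I would record that 3) is equivalent to ``$f = u(p)$ for some $p \in \bar K[x]$ and $u \in \bar K[t]$ with $\deg u \ge 2$'': one direction is the description of $\bar K[p^2,p^3]$ above, and conversely, given $f = u(p)$ with $\deg u \ge 2$, a root $a$ of $u'$ exists over $\bar K$, and replacing $p$ by $p - a$ kills the linear term of $u$. Call such $f$ \emph{composite}, and suppose $f$ is \emph{not} composite; I must then bound the number of reducible $f - c$ by something below $d^2$. The mechanism is: (a) $f(x) - c$ is primitive and $c$-linear over $\bar K[x]$, hence prime there, hence (Gauss) irreducible over $\bar K(c)$, always; (b) the ring isomorphism $\bar K[x,c]/(f-c) \cong \bar K[x]$ sending $c \mapsto f$ identifies $\bar K(c)$ with $\bar K(f) \subseteq \bar K(x)$, so $f - c$ is \emph{geometrically} irreducible over $\bar K(c)$ exactly when $\bar K(f)$ is algebraically closed in $\bar K(x)$; (c) this last condition holds precisely for non-composite $f$ --- one direction is clear, and for the other, an intermediate field is finite over $\bar K(f)$ and, being a transcendence-degree-one subfield of a rational function field, is rational by a Lüroth-type argument, with a polynomial generator $p$ such that $f \in \bar K[p]$. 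So a non-composite $f$ has $f - c$ geometrically irreducible over $\bar K(c)$, and an \emph{effective} form of Bertini's irreducibility theorem then shows that $f - c_0$ is reducible for at most finitely many $c_0 \in \bar K$, with a bound below $d^2$; this contradicts 2).

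The quantitative bound in the last step is the genuine obstacle. My plan for it is to reduce to $n = 2$ via a generic plane section $\Pi$, which preserves $\deg f$, preserves non-compositeness, and, since a nontrivial factorization of $f - c$ restricts to one of $f|_\Pi - c$ for generic $\Pi$, also preserves hypothesis 2) once $\Pi$ is chosen generic for a fixed set of $d^2$ of the relevant values of $c$. In the plane case this becomes the classical statement that a non-composite pencil of degree-$d$ curves in $\mathbb{P}^2$ has fewer than $d^2$ reducible members, which I would import from the literature on pencils of curves (or, more directly, from Stein's inequality on reducible fibers). Finding a short self-contained proof of a bound sharp enough to beat $d^2$ is the one step I expect to cost real work.
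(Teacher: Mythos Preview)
Your arguments for 1)$\Rightarrow$2) and 3)$\Rightarrow$4) are essentially the paper's own. One small sharpening: since $\bar K$ is algebraically closed and $g$ is non-constant, $g$ is surjective, so in fact $f-c$ is reducible for \emph{every} $c\in\bar K$; the paper uses exactly this.

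The genuine difference is 2)$\Rightarrow$3). The paper disposes of it in one line by citing Corollary~3 of Theorem~37 in Schinzel's \emph{Polynomials with special regard to reducibility}: if $f-c$ is reducible for at least $d^2$ values of $c$, then $f=g(p)$ for some $g\in\bar K[t]$ of degree $\ge 2$ and $p\in\bar K[x]$, after which shifting $p$ by a root of $g'$ kills the linear term, exactly as you do. Your route---identify $\bar K(c)\cong\bar K(f)$, use a L\"uroth/Gordan argument to detect compositeness via an intermediate field, reduce to $n=2$ by a generic plane section, and invoke Stein's bound on reducible members of a non-composite pencil---is correct and unpacks the machinery behind Schinzel's statement. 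It is more explanatory but not more self-contained, since the quantitative step still comes from the literature (Stein) just as the paper's does (Schinzel). Two places where your sketch hides real content: upgrading the L\"uroth generator $q\in\bar K(x)$ to a \emph{polynomial} $p\in\bar K[x]$ with $f\in\bar K[p]$ (this uses that $\bar K[x]$ is integrally closed), and checking that a generic plane section preserves non-compositeness. Both are known, but they are not free.
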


\begin{proof}
If $f \in \bar{K}$, then 2), 3) and 4) are trivially satisfied, so assume that $f \notin \bar{K}$.
\begin{description}

\item[3) \imp 4)]
Suppose that 3) holds. Then $0 < \deg p < \deg f$. Since $\parder{f}{p} \notin \bar{K}$
and $\parder{f}{p} \mid \parder{f}{x_i}$ for all $i$, we obtain 4).

\item[2) \imp 3)]
Suppose that 2) holds. From Corollary 3 of \cite[Th.\@ 37]{MR1770638}, it follows that
$f = g(p)$ for some $g \in \bar{K}[t]$ of degree $\ge 2$ and some $p \in \bar{K}[x]$.
Let $c$ be a root of $g'$. If we replace $g(t)$ by $g(t + c)$ and $p$ by $p - c$, then we
still have $f = g(p)$, but $g'(0)$ becomes $0$. Hence $g$ has no linear part any more, and 3) follows.

\item[1) \imp 2)]
Suppose that we have a decomposition $f - g = h_1 \cdot h_2$ over $\bar{K}$, 
where $\deg h_i \ge 1$ for both $i$.
If $h_1 \in \bar{K}[y]$, then the leading homogeneous part with respect to $x$ of $f - g$ is 
divisible by $h_1$. This contradicts that $f - g$ has no monomials with variables of both $x$ and $y$, 
so $\deg_x h_1 > 0$ and $\deg_x h_2 < \deg_x f$. 

Consequently, $\deg_x h_2 < \deg (f-c)$ and similarly
$\deg_x h_1 < \deg (f-c)$. For every $c \in \bar{K}$, $g = c$ has a solution $a \in \bar{K}^n$.
Now $h_1|_{y=a} \cdot h_2|_{y=a}$ is a decomposition of $f - c$, where
$\deg h_i|_{y=a} \ge 1$ for both $i$ because $\deg h_{3-i}|_{y=a} < \deg f$ for both $i$. \qedhere

\end{description}
\end{proof}

\noindent
In 3) of the lemma below, $i\le j$ is written instead of $i<j$ to include the case $n =1$,
where 2) and 3) of the lemma below are always satisfied (take $p = x_1$ and $q = 1$ in 2)).

\begin{lemma} \label{irred2}
Assume that $f, g \in \bar{K}[x]$ have degree at most $d$. Then for
\begin{enumerate}

\item[1)] $f - cg$ is reducible for at least $d^2$ values of $c \in \bar{K}$ and
          $\gcd\{f,g\} \in \bar{K}$,

\item[2)] there exist $p, q \in \bar{K}[x]$ such that $f \in \bar{K}[p,q]$ and
          $g \in \bar{K}[p^2,pq,q^2,p^3,p^2q,\allowbreak pq^2,q^3]$,

\item[3)] $\gcd\{\det \jac_{x_i,x_j}(f,g)\mid 1\le i\le j\le n\} \notin \bar{K}^{*}$.

\end{enumerate}
we have 1) $\Longrightarrow$ 2) $\Longrightarrow$ 3).
\end{lemma}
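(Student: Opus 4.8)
The plan is to follow the proof of Lemma \ref{irred}, with the one-parameter specialization $f-c$ replaced by the pencil $f-cg$, and I would begin with the easy implication 2) $\Longrightarrow$ 3). Write $f=F(p,q)$ with $F\in\bar{K}[u,v]$ and $g=G(p,q)$ with $G\in\bar{K}[u,v]$ having no linear term. Applying the chain rule columnwise gives $\jac_{x_i,x_j}(f,g)=M\cdot\jac_{x_i,x_j}(p,q)$ for every pair $i<j$, where $M=\jac_{u,v}(F,G)\big|_{(u,v)=(p,q)}$; hence $\det M=\big(\parder{F}{u}\parder{G}{v}-\parder{F}{v}\parder{G}{u}\big)\big|_{(u,v)=(p,q)}$ divides every minor $\det\jac_{x_i,x_j}(f,g)$ and so divides their gcd. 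If $p,q$ are algebraically dependent over $\bar{K}$, or $F,G$ are, then $f,g$ are algebraically dependent over $\bar{K}$ and all these minors vanish, so 3) holds; otherwise $\parder{F}{u}\parder{G}{v}-\parder{F}{v}\parder{G}{u}$ is a nonzero element of $\bar{K}[u,v]$ that vanishes at the origin (because $\parder{G}{u}$ and $\parder{G}{v}$ have no constant term), hence is non-constant, and since $p,q$ are algebraically independent its value at $(p,q)$ is a non-unit, which is 3).

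For 1) $\Longrightarrow$ 2) I would first dispose of the case that $f$ and $g$ are algebraically dependent over $\bar{K}$. We may assume $\gcd\{f,g\}\in\bar{K}^{*}$, and since some $f-cg$ is reducible, hence non-constant, neither $f$ nor $g$ equals $0$. Using that $\bar{K}[x]$ is integrally closed in $\bar{K}(x)$ and that every non-constant polynomial over $\bar{K}$ is surjective, one checks that $f$ and $g$ both lie in $\bar{K}[p]$ for a single $p\in\bar{K}[x]$, \emph{unless} $g$ is a non-zero constant, in which case $f-c$ is reducible for at least $d^{2}$ values of $c$, so $f\in\bar{K}[p^{2},p^{3}]\subseteq\bar{K}[p]$ by Lemma \ref{irred}. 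Either way, taking $q:=1$ makes both conditions in 2) hold trivially.

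So assume $f$ and $g$ are algebraically independent over $\bar{K}$. Because $\gcd\{f,g\}\in\bar{K}^{*}$, the pencil $f-tg\in\bar{K}(t)[x]$ admits no factorization into two polynomials of positive degree in $x$ over $\bar{K}(t)$ (such factors would, by a degree count in $t$, yield a non-constant common divisor of $f$ and $g$). Hence, by the reducibility bound behind Lemma \ref{irred} --- Corollary 3 of \cite[Th.\@ 37]{MR1770638}, now applied to the pencil $f-cg$ --- the existence of at least $d^{2}$ reducible members forces $f-tg$ to be \emph{geometrically} reducible over $\bar{K}(t)$; equivalently, $\bar{K}(f/g)$ is not algebraically closed in $\bar{K}(x)$. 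As $\mathbb{A}^{n}$ is rational, the curve occurring in the Stein factorization of $x\mapsto\big(f(x):g(x)\big)$ is rational by L{\"u}roth, so $f/g=\rho(p/q)$ for coprime $p,q\in\bar{K}[x]$ and some $\rho\in\bar{K}(T)$ of degree $e\ge2$. Writing $\rho=A/B$ with coprime binary forms $A,B$ of degree $e$, coprimality of $p,q$ makes $A(p,q)$ and $B(p,q)$ coprime, so $f=\mu A(p,q)$ and $g=\mu B(p,q)$ for some $\mu\in\bar{K}[x]$; then $\gcd\{f,g\}\in\bar{K}^{*}$ forces $\mu\in\bar{K}^{*}$, and after absorbing $\mu$ both $f$ and $g$ are forms of degree $e\ge2$ in $p,q$. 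Thus $f\in\bar{K}[p,q]$, and every monomial of $g$, viewed as a polynomial in $p,q$, has degree $e\ge2$ and is therefore a product of monomials each of degree $2$ or $3$, so $g\in\bar{K}[p^{2},pq,q^{2},p^{3},p^{2}q,pq^{2},q^{3}]$; this is 2).

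The step I expect to be the main obstacle is the passage from ``$f-tg$ geometrically reducible'' to the explicit composition $f=A(p,q)$, $g=B(p,q)$: it requires the pencil form of the reducibility bound with the correct constant $d^{2}$, the Bertini--Krull theorem together with L{\"u}roth for the quotient curve, and a careful clearing of denominators that uses $\gcd\{f,g\}\in\bar{K}^{*}$ to discard the spurious common factor. The algebraically dependent case, although it splits into a few sub-cases, is routine.
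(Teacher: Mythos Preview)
Your argument for 2) $\Longrightarrow$ 3) matches the paper's almost verbatim: the same chain-rule factorization $\jac_{x_i,x_j}(f,g)=\jac_{p,q}(f,g)\cdot\jac_{x_i,x_j}(p,q)$, the same case split on algebraic (in)dependence of $f,g$, and the same observation that the partial derivatives of $g$ with respect to $p,q$ have no constant term, forcing $\det\jac_{p,q}(f,g)\in\bar{K}[p,q]\setminus\bar{K}$.

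For 1) $\Longrightarrow$ 2) the paper takes a much shorter route. It simply observes that $f + x_{n+1}\,g \in \bar{K}[x_1,\ldots,x_{n+1}]$ is irreducible once $\gcd\{f,g\}\in\bar{K}^{*}$ (your own degree-in-$t$ argument proves exactly this), and then applies Corollary~2 of \cite[Th.~37]{MR1770638} to that polynomial in $n+1$ variables; the corollary directly outputs the $p,q$ of statement~2). So the correct Schinzel reference here is Corollary~2, not Corollary~3---the latter is the $f-c$ version used in Lemma~\ref{irred}, and you should not cite it for the pencil $f-cg$.

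Your Bertini--Krull/Stein/L{\"u}roth reconstruction is a legitimate alternative and, as you note, actually yields more: both $f$ and $g$ emerge as binary forms of the \emph{same} degree $e\ge 2$ in $p,q$, which is stronger than~2). The cost is the extra machinery, the need to pin down the constant $d^2$ in the pencil reducibility bound, and the separate treatment of the algebraically dependent case (where you invoke the not-quite-trivial fact that two algebraically dependent polynomials lie in $\bar{K}[p]$ for a single polynomial $p$). The paper's one-line citation of Corollary~2 absorbs all of this, which is why no case distinction appears there.
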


\begin{proof}
If $fg = 0$, then 1) $\Longrightarrow$ 2) follows, because 
$f, g \in \bar{K}$ if $fg = 0$ and $\gcd\{f,g\} \in \bar{K}$.
If $fg \ne 0$, then 1) $\Longrightarrow$ 2) follows from Corollary 2 of 
\cite[Th.\@ 37]{MR1770638}, because $f + x_{n+1} g$ is irreducible 
if $\gcd\{f,g\} \in \bar{K}$ and $fg \ne 0$. So 1) $\Longrightarrow$ 2)
is satisfied in any case.

If $\jac f$ and $\jac g$ are linearly dependent as row vectors over $\bar{K}(x)$, then the
formula in 3) equals zero, so assume the opposite. Then $f$ and $g$
are algebraically independent over $\bar{K}$. Suppose that 2) holds.
Then $p$ and $q$ are algebraically independent over $\bar{K}$ as well,
and by the chain rule
$$
\jac_{x_i,x_j}(f,g) = \jac_{p,q}(f,g) \cdot \jac_{x_i,x_j}(p,q)
$$
Hence $\det \jac_{p,q}(f,g)$ divides the formula in 3). Since both entries of
$\jac_{p,q} f$ are contained in $\bar{K}[p,q]$, and both entries of
$\jac_{p,q} g$ are linear combinations over $\bar{K}$ of terms $p^iq^j$ with $i+j\geq 1$, we have
$\det \jac_{p,q}(f,g) \in \bar{K}[p,q] \setminus \bar{K}$,
and 3) follows. This gives 2) $\Longrightarrow$ 3).
\end{proof}

\section{Irreducibility results for arbitrary Keller maps}

\begin{theorem} \label{irredlc}
Let $F \in K[x]^n$ be a Keller map of degree $d$.
Take $i \in \{1,2,\ldots,n,\allowbreak n+1\}$ and fix $\mu_1, \ldots,\mu_{i-1}, \mu_{i+1},
\ldots, \mu_{n+1} \in \bar{K}$. If $\mu_j \ne 0$ for some $j$ with $i \ne j \le n$,
then
$$
f := \mu_1 F_1 + \mu_2 F_2 + \cdots + \mu_n F_n + \mu_{n+1}
$$
is reducible over $\bar{K}$ for at most $d^2 - 1$ values of $\mu_i \in \bar{K}$.
\end{theorem}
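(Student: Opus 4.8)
Put $d = \deg F$, so $\deg F_k \le d$ for all $k$. It suffices to show that $f$ cannot be reducible over $\bar{K}$ for $d^{2}$ or more values of $\mu_i$. Fix the $\mu_j$ with $j \ne i$ and write $g_0$ for the part of $f$ free of $\mu_i$, so that $f = \mu_i F_i + g_0$ if $i \le n$ and $f = g_0 + \mu_{n+1}$ if $i = n+1$. Let $\nu \in \bar{K}^{n}$ collect the coefficients of $g_0$ on the $F_k$, i.e.\ $\nu = (\mu_1,\dots,\mu_{i-1},0,\mu_{i+1},\dots,\mu_n)$ if $i \le n$ and $\nu = (\mu_1,\dots,\mu_n)$ if $i = n+1$; in both cases $\jac g_0 = \nu\tp \jac F$, the constant term (if any) disappearing under differentiation. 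The hypothesis ``$\mu_j \ne 0$ for some $j$ with $i \ne j \le n$'' is exactly $\nu \ne 0$. Since $\det \jac F \in K^{*}$, the matrix $\jac F$ is invertible over $\bar{K}(x)$, so $\nu\tp \jac F \ne 0$; hence $g_0 \notin \bar{K}$, and when $i \le n$ the vectors $e_i$ and $\nu$ are linearly independent, because $\nu_i = 0$.

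\textbf{The key lemma.} The heart of the matter is the following consequence of the Keller condition: if $a,b \in \bar{K}^{n}$ are linearly independent and $c \in \bar{K}$, then $\gcd\{a\tp F,\,b\tp F + c\} \in \bar{K}^{*}$. To see this, suppose an irreducible $q \in \bar{K}[x]$ divides both $P := a\tp F$ and $Q := b\tp F + c$, which are nonzero by the invertibility of $\jac F$, with $q$-adic valuations $m_1,m_2 \ge 1$ respectively. Since $K$ has characteristic zero, the row vectors $\jac P = a\tp \jac F$ and $\jac Q = b\tp \jac F$ equal $q^{m_1-1}\sigma_1$ and $q^{m_2-1}\sigma_2$ for some row vectors $\sigma_1,\sigma_2$ with $q \nmid \sigma_1,\sigma_2$, and modulo $q$ each of $\sigma_1,\sigma_2$ is a polynomial scalar multiple of $\jac q$. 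Complete $a\tp,b\tp$ to a matrix $T \in \GL_n(\bar{K})$; then the first two rows of $T\jac F$ are $\jac P$ and $\jac Q$, so $\det T \cdot \det \jac F = \det(T\jac F) = q^{m_1+m_2-2}\det M$, where $M$ arises from $T\jac F$ by replacing those two rows with $\sigma_1,\sigma_2$. Over the domain $\bar{K}[x]/(q)$ the first two rows of $M$ become scalar multiples of one vector, so $q \mid \det M$; hence $q^{m_1+m_2-1}$ divides the nonzero constant $\det T \cdot \det \jac F$, which is impossible.

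\textbf{Case $i \le n$.} Apply the key lemma with $a = e_i$, $b = \nu$, $c = \mu_{n+1}$ to get $\gcd\{F_i,g_0\} \in \bar{K}^{*}$. Assume $f = g_0 + \mu_i F_i$ is reducible over $\bar{K}$ for at least $d^{2}$ values of $\mu_i$; equivalently, $g_0 - cF_i$ is reducible for at least $d^{2}$ values of $c \in \bar{K}$. As $\deg g_0, \deg F_i \le d$ and $\gcd\{g_0,F_i\} \in \bar{K}$, condition 1) of Lemma \ref{irred2} holds for the pair $(g_0,F_i)$, hence also condition 3): some non-unit $q$ divides $\det \jac_{x_k,x_l}(g_0,F_i)$ for all $1 \le k < l \le n$. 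But these are exactly the $2\times 2$ minors on columns $k,l$ of $B\jac F$, where $B$ is the $2 \times n$ matrix with rows $\nu\tp$ and $e_i\tp$; completing $B$ to $T \in \GL_n(\bar{K})$ and expanding $\det(T\jac F)$ by Laplace along its first two rows shows $q \mid \det(T\jac F) = \det T \cdot \det \jac F \in \bar{K}^{*}$, a contradiction. Hence $f$ is reducible for at most $d^{2}-1$ values of $\mu_i$.

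\textbf{Case $i = n+1$, and the main obstacle.} Now $f = g_0 + \mu_{n+1}$ with $g_0 \notin \bar{K}$. Assume $f$ is reducible for at least $d^{2} \ge (\deg g_0)^{2}$ values of $\mu_{n+1}$, i.e.\ $g_0 - c$ is reducible for at least $(\deg g_0)^{2}$ values of $c \in \bar{K}$. The implications 2) $\Rightarrow$ 3) $\Rightarrow$ 4) of Lemma \ref{irred} (which do not involve the subtracted polynomial, so may be applied with $f := g_0$) then yield a non-unit $q$ dividing every entry of $\jac g_0 = \nu\tp \jac F$; completing $\nu\tp$ to $T \in \GL_n(\bar{K})$ and expanding $\det(T\jac F)$ along its first row again gives $q \mid \det T \cdot \det \jac F \in \bar{K}^{*}$, a contradiction. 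So $f$ is reducible for at most $d^{2}-1$ values of $\mu_{n+1}$. The only genuinely delicate point is the key lemma: excluding a shared irreducible factor really uses that $\det \jac F$ is a nonzero \emph{constant}, not merely nonzero, and the valuation bookkeeping there needs characteristic zero; granting the lemma, both cases reduce to plugging into Lemmas \ref{irred2} and \ref{irred} and one Laplace expansion.
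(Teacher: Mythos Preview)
Your proof is correct and follows essentially the same route as the paper: both cases are reduced to Lemmas \ref{irred} and \ref{irred2}, and the gcd-of-minors conditions in those lemmas are checked by a Laplace expansion of the Keller determinant along one, respectively two, suitable rows. The only difference is your separate ``key lemma'' establishing $\gcd\{g_0,F_i\}\in\bar K^{*}$, which the paper does not verify explicitly. That lemma is correct but unnecessary: if an irreducible $q$ divided both $g_0=qa$ and $F_i=qb$, then in
\[
\det\jac_{x_k,x_l}(g_0,F_i)=(q_{x_k}a+qa_{x_k})(q_{x_l}b+qb_{x_l})-(q_{x_l}a+qa_{x_l})(q_{x_k}b+qb_{x_k})
\]
the terms $q_{x_k}q_{x_l}ab$ cancel and every remaining term carries a factor $q$. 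Thus a common factor of $g_0$ and $F_i$ would already divide every $2\times 2$ Jacobian minor, contradicting the Laplace-expansion computation you (and the paper) perform anyway; so the coprimality hypothesis of Lemma \ref{irred2}(1) comes for free.
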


\begin{proof}
Assume without loss of generality that $j = 1$.
Then $(f, F_2, \ldots, F_n)$ is a Keller map as well. By expansion of the Jacobian determinant
along the first row, we see that $\gcd\big\{\parder{}{x_1} f, \parder{}{x_2} f, \ldots,
\parder{}{x_n} f\big\} \in K^{*} \subseteq \bar{K}^{*}$. Hence the
case $i = n+1$ follows from 2) $\Longrightarrow$ 4) of Lemma \ref{irred}.

If $i \le n$, then expansion of the
Jacobian determinant along the first and the $i$-th row gives
$\gcd\{\det\jac_{x_k,x_l}(f,F_i)\mid 1\le k < l\le n\} \in K^{*} \subseteq \bar{K}^{*}$.
So the case $i \le n$ follows from 1) $\Longrightarrow$ 3) of Lemma \ref{irred2}.
\end{proof}

\begin{corollary}
Assume that $F \in K[x]^n$ is a Keller map.
Then there exists a $\lambda \in K^n$ and a $T \in \GL_n(K)$ such that the components of
both $F + \lambda$ and $T^{-1}F(Tx)$ are all irreducible over $\bar{K}$.
\end{corollary}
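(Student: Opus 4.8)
The plan is to apply Theorem~\ref{irredlc} finitely many times, once for each component, using the fact that over the infinite field $\bar{K}$ (and in fact over the infinite field $K$, since $\chr K = 0$) a ``bad'' set of size at most $d^2-1$ can always be avoided. First I would set $d := \deg F$, which I may assume is at least $2$ (if $d \le 1$ the map is affine and every component is linear, hence irreducible, so we may take $\lambda = 0$ and $T = I_n$). Note also that for a Keller map no component $F_i$ is constant, so each $F_i$ genuinely involves some variable; in particular the Jacobian has no zero row.

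For the first statement, about $F + \lambda$: I would build $\lambda = (\lambda_1,\dots,\lambda_n) \in K^n$ coordinate by coordinate, but it is cleaner to do all coordinates in one sweep using Theorem~\ref{irredlc} with $i = n+1$. Fix an index $k \le n$; apply Theorem~\ref{irredlc} to the Keller map $F$ with $i = n+1$, with $\mu_j = \delta_{jk}$ for $j \le n$ (so the single nonzero coefficient among the first $n$ is $\mu_k = 1$, which is legitimate since $k \le n$ and $k \neq i = n+1$), and with $\mu_{n+1}$ the free variable. The theorem says $F_k + \mu_{n+1}$ is reducible over $\bar{K}$ for at most $d^2 - 1$ values of $\mu_{n+1} \in \bar{K}$, hence (intersecting with $K$) for at most $d^2-1$ values of $\mu_{n+1} \in K$. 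Doing this for each $k = 1,\dots,n$ produces a finite set $B \subseteq K$, $|B| \le n(d^2-1)$, of ``forbidden'' shift values; since $K$ is infinite we may pick $\lambda_1 = \cdots = \lambda_n =: c \in K \setminus B$, and then every component $F_k + c$ of $F + \lambda$ is irreducible over $\bar{K}$. (If one prefers distinct shifts per coordinate, pick $\lambda_k \in K$ avoiding the $k$-th forbidden set separately; either works.)

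For the second statement, about $T^{-1}F(Tx)$: the key observation is that the $k$-th component of $T^{-1}F(Tx)$ is $(T^{-1})_{k,1}F_1(Tx) + \cdots + (T^{-1})_{k,n}F_n(Tx)$, i.e. a fixed linear combination of the $F_j$ evaluated at $Tx$; and since $x \mapsto Tx$ is a linear automorphism of $\bar{K}^n$, irreducibility over $\bar{K}$ is unaffected by the substitution $x \mapsto Tx$. So it suffices to find $T \in \GL_n(K)$ such that every row of $T^{-1}$ gives a linear combination $\sum_j (T^{-1})_{k,j} F_j$ that is irreducible over $\bar{K}$. Equivalently, writing $S = T^{-1} \in \GL_n(K)$, I want each of the $n$ forms $\ell_k(F) := \sum_j S_{kj} F_j$ ($k=1,\dots,n$) to be irreducible. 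I would construct the rows of $S$ one at a time. For the $k$-th row: I want to choose $(S_{k1},\dots,S_{kn}) \in K^n$ so that (a) it is linearly independent from the previously chosen rows $S_{1\bullet},\dots,S_{(k-1)\bullet}$ — a Zariski-open, nonempty condition on $K^n$ — and (b) $\sum_j S_{kj}F_j$ is irreducible over $\bar{K}$. For (b), pick any index $j_0$ with $j_0 \neq k$ among $\{1,\dots,n\}$ if $n \ge 2$ (and the $n=1$ case is trivial since $F_1$ is a degree-$d$ Keller component in one variable, hence $\det \jac F = F_1' \in K^*$, so $F_1$ is linear); set all $S_{kj}$ for $j \neq j_0$ to prescribed values making the row independent of the earlier ones for a suitable choice of the remaining free entry, and then invoke Theorem~\ref{irredlc} with $i = j_0$, $\mu_j = S_{kj}$ for $j \neq j_0$ (with $\mu_{n+1}=0$), noting $\mu_k = S_{kk}$ can be taken nonzero and $k \le n$, $k \neq j_0$ — this forces irreducibility of $\sum_j S_{kj}F_j$ for all but at most $d^2-1$ values of the free entry $S_{kj_0} \in \bar{K}$. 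The finitely many values excluded by (a) and the at most $d^2-1$ excluded by (b) leave infinitely many admissible choices in $K$, so the row can be completed. After $n$ steps $S \in \GL_n(K)$ is built, and $T := S^{-1}$ works.

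The main obstacle — really the only point requiring care — is the bookkeeping in the second part: arranging that the choice of free entry used to secure irreducibility of the $k$-th linear combination is compatible with keeping $S$ invertible. One clean way to handle this is to do it generically: the set of $S \in \Mat_{n}(K)$ for which some row combination $\ell_k(F)$ is reducible over $\bar{K}$, or for which $\det S = 0$, is contained in a proper Zariski-closed subset of $\Mat_n(\bar{K})$ (properness for the reducibility part is exactly what Theorem~\ref{irredlc} gives, applied row by row with a nonzero off-diagonal coefficient available since each row may be perturbed freely), and a proper Zariski-closed subset of $\Mat_n(\bar{K})$ cannot contain the infinite set $\Mat_n(K)$; hence a suitable $T^{-1} = S \in \GL_n(K)$ exists. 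I would write the final argument in this generic form, as it sidesteps the inductive row-by-row construction entirely.
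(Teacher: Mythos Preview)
Your proof is correct and follows exactly the strategy of the paper: apply Theorem~\ref{irredlc} with $i = n+1$ for the translation $F + \lambda$, and with $i \le n$ and $\mu_{n+1} = 0$ for the linear conjugation $T^{-1}F(Tx)$. The paper's own proof is two sentences long, so you have essentially written out the details the authors leave to the reader; the row-by-row construction of $S = T^{-1}$ is a correct way to make the second application precise.

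One small remark on your closing ``generic'' reformulation: Theorem~\ref{irredlc} only bounds the intersection of the reducibility locus with axis-parallel lines. To conclude from this that the locus lies in a \emph{proper Zariski-closed} subset of $\Mat_n(\bar K)$ you implicitly need the additional (standard but unstated) fact that reducibility is a constructible condition on the coefficient space---finiteness along lines alone does not force containment in a proper closed set for an arbitrary subset. Your row-by-row construction, by contrast, is self-contained: it only uses that along each relevant line there are at most $d^2-1$ bad values for (b) and at most one bad value for (a), so infinitely many good choices remain in $K$. If you want to present the generic version, either invoke constructibility explicitly or simply retain the inductive construction, which already suffices.
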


\begin{proof}
Notice that $\#K \ge d^2$ because $\chr K = 0$, where $d:=\deg F$.
The first claim follows from the case $i = n+1$ of Theorem \ref{irredlc} and the
second claim follows from the case $i \le n$ of Theorem \ref{irredlc} with
$\mu_{n+1} = 0$.
\end{proof}

\noindent
In \cite[Th.\@ 3]{MR1886939}, the authors proved additional properties for the $T \in \GL_n(K)$
when $n \ge 3$ and $K \subseteq \C$, namely that there exists a $T \in \GL_n(K)$ such that for every
$\lambda \in K^n$, every component of $T^{-1}F(Tx) + \lambda$ is irreducible over $\bar{K}$.

\begin{lemma} \label{mulem}
Assume $F \in K[x]^n$ is any polynomial map over $K$.
Let $d \ge 2$ be an integer and $\lambda \in K^n$. 
Then for all $h \in K[x,x_{n+1}]$, the map 
$G = (F - \lambda x_{n+1}^d, x_{n+1}, x_{n+2} - h)$ 
has the property that for all $\mu \in K^{n+3}$, either
$$
g := \mu_1 G_1 + \mu_2 G_2 + \cdots + \mu_{n+2} G_{n+2} + \mu_{n+3}
$$
is irreducible, or $\mu_{n+1} = \mu_{n+2} = 0$ and
$$
f := \mu_1 F_1 + \mu_2 F_2 + \cdots + \mu_n F_n + \mu_{n+3} = g,
$$
or we have $\mu_{n+2} = 0$ and $f \in \bar{K}[p^2,p^3]$ for some $p \in \bar{K}[x]$, in which case
$\jac f$ is not unimodular.
\end{lemma}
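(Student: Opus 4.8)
The plan is to fix $\mu \in K^{n+3}$ and analyze the polynomial $g = \sum_{k=1}^{n+2} \mu_k G_k + \mu_{n+3}$ by separating the variables. Write out the components explicitly: $G_k = F_k - \lambda_k x_{n+1}^d$ for $k \le n$, $G_{n+1} = x_{n+1}$, and $G_{n+2} = x_{n+2} - h$. Hence
$$
g = \sum_{k=1}^n \mu_k F_k + \mu_{n+3} - \Big(\sum_{k=1}^n \mu_k \lambda_k\Big) x_{n+1}^d + \mu_{n+1} x_{n+1} + \mu_{n+2}(x_{n+2} - h).
$$
First I would dispose of the easy case $\mu_{n+2} \ne 0$: then $g$ is of degree $1$ in $x_{n+2}$ with unit leading coefficient and with the rest of $g$ not involving $x_{n+2}$, so $g$ is irreducible (a polynomial of the form $c\,x_{n+2} + (\text{stuff not involving } x_{n+2})$ with $c \in K^{*}$ is always irreducible). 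This leaves $\mu_{n+2} = 0$, so from now on $g$ does not involve $x_{n+2}$ or $h$.

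With $\mu_{n+2}=0$, set $f := \sum_{k=1}^n \mu_k F_k + \mu_{n+3}$ and $a := \sum_{k=1}^n \mu_k \lambda_k \in K$, so that $g = f - a\,x_{n+1}^d + \mu_{n+1} x_{n+1}$, a polynomial in $x$ and $x_{n+1}$ only. I would split into subcases according to whether $a = 0$. If $a = 0$ and $\mu_{n+1} = 0$, then $g = f$ involves neither $x_{n+1}$ nor $x_{n+2}$, which is exactly the middle alternative in the statement. If $a = 0$ but $\mu_{n+1} \ne 0$, then $g = f + \mu_{n+1} x_{n+1}$ is linear with unit coefficient in the variable $x_{n+1}$, which does not appear in $f$, hence $g$ is irreducible as before. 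So assume $a \ne 0$. Then $g$, viewed as a polynomial in $\bar{K}[x][x_{n+1}]$, has the form $-a\, x_{n+1}^d + \mu_{n+1} x_{n+1} + f$ with $f \in \bar{K}[x]$; its leading term in $x_{n+1}$ is $-a\,x_{n+1}^d$ with $-a \in \bar K^*$. If $g$ were reducible over $\bar K$, then applying the implication $1) \Longrightarrow 2)$ of Lemma \ref{irred} (with the roles of the variable blocks swapped: here $x_{n+1}$ plays the role of the ``$x$'' block of a single variable and $x$ plays the role of the ``$y$'' block, and $-a\,x_{n+1}^d + \mu_{n+1}x_{n+1}$ plays the role of the degree-$d$ polynomial ``$f$'') we would get that the specialization to $x = a'$ is reducible for $d^2$ values; more directly I would invoke $1)\Longrightarrow 3)$ of Lemma \ref{irred} on $g$ regarded with the two variable-blocks $\{x_{n+1}\}$ and $\{x\}$ to conclude $g \in \bar{K}[p^2,p^3]$ for some $p$, which then by $3)\Longrightarrow 4)$ forces $\gcd$ of the partials of $g$ to be nonconstant — but the partial of $g$ with respect to $x_{n+1}$ is $-da\,x_{n+1}^{d-1} + \mu_{n+1}$, a nonzero polynomial in $x_{n+1}$ alone, while the partials with respect to the $x_k$ are the partials of $f$, which lie in $\bar K[x]$; a common nonunit divisor would have to divide $-da\,x_{n+1}^{d-1}+\mu_{n+1}$, impossible unless it is a unit since that polynomial is coprime to everything in $\bar K[x]$. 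Actually cleaner: since $g$ has no term mixing $x_{n+1}$ with the $x$ variables, $g$ itself is of the shape treated by $1)\Longrightarrow 2)$ of Lemma \ref{irred} with "$x$"$=x_{n+1}$ and "$y$"$=x$, and the leading $x_{n+1}$-part $-a x_{n+1}^d$ is a single term, so reducibility of $g$ is impossible when $d \ge 2$ and $a \ne 0$ unless the whole thing degenerates. Thus the only surviving non-irreducible case is the middle alternative, i.e. $\mu_{n+1}=\mu_{n+2}=0$ and $f = g$.

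It remains to establish the last clause: in the alternative $\mu_{n+2}=0$, $f = g$, we must show $f \in \bar K[p^2,p^3]$ for some $p \in \bar K[x]$ and that then $\jac f$ is not unimodular. But in that alternative $g$ is reducible by hypothesis of the dichotomy (we are in the branch where $g$ is \emph{not} irreducible), $g = f$, and $f \in K[x] \subseteq \bar K[x]$, so $f$ is reducible over $\bar K$; now apply $1)\Longrightarrow 3)$ of Lemma \ref{irred} to $f$ (taking the $y$-block empty, or rather noting $f-0$ is reducible and $0 \in \bar K$ is not allowed as the "$g$" there — instead use that $f$ reducible means $f - c$ reducible for the single value $c=0$; to get the $d^2$ values needed I would instead directly cite Corollary 3 of the Stein-type theorem as in the proof of Lemma \ref{irred}, or simply observe that a reducible $f$ already gives $f=g_0(p)$ with $\deg g_0 \ge 2$ by that corollary). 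After a shift replacing the inner polynomial's linear term we get $f \in \bar K[p^2,p^3]$, and then $3)\Longrightarrow 4)$ of Lemma \ref{irred} gives $\gcd$ of the partials of $f$ nonconstant, so $\jac f$ is not unimodular. The main obstacle I anticipate is getting the variable-block bookkeeping in Lemma \ref{irred} exactly right — Lemma \ref{irred} is stated for $f \in \bar K[x]$ and $g \in \bar K[y]\setminus\bar K$, and here I need to apply it with $x_{n+1}$ as the distinguished single-variable block while the original $x$-variables form the other block, so I should state this re-indexing carefully and check that the degree-$d$ bound and the "no mixed terms" hypothesis transfer correctly.
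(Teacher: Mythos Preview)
Your handling of $\mu_{n+2} \ne 0$ and of the subcase $a = 0$ is fine and essentially matches the paper. The gap is in the subcase $a \ne 0$: you try to show that $g$ is then irreducible, but this is simply false. Take $n = 1$, $F_1 = x_1^2$, $\lambda_1 = 1$, $d = 2$, $\mu = (1,0,0,0)$; then $f = x_1^2$, $a = 1$, $\mu_{n+1}=0$, and $g = x_1^2 - x_2^2 = (x_1-x_2)(x_1+x_2)$. Your attempted contradiction via partials also fails on this example: the partials of $g$ are $2x_1$ and $-2x_2$, whose $\gcd$ is a unit, yet $g$ is reducible. The underlying mistake is that Lemma~\ref{irred} draws conclusions about its ``$f$''-part only, not about the full polynomial $f - g$; with your swapped blocks, conditions 3) and 4) say something about the one-variable polynomial $-a x_{n+1}^d + \mu_{n+1} x_{n+1}$, not about $g$.

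The third alternative in the lemma is there precisely to absorb this case, and the paper's proof uses the block assignment you keep resisting: take Lemma~\ref{irred}'s ``$x$'' to be your $x=(x_1,\ldots,x_n)$, its ``$f$'' to be your $f \in K[x]$, its ``$y$'' to be the single variable $x_{n+1}$, and its ``$g$'' to be $f - g = a x_{n+1}^d - \mu_{n+1} x_{n+1}$, which lies in $K[x_{n+1}]\setminus K$ whenever $f \ne g$ (its nonzero terms have degree $1$ or $d$, never $0$). Then hypothesis 1) reads ``$f - (f-g) = g$ is reducible'', and 1)$\Rightarrow$3) gives $f \in \bar K[p^2,p^3]$ for some $p \in \bar K[x]$; then 3)$\Rightarrow$4) gives the non-unimodularity of $\jac f$. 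No split on $a$ versus $\mu_{n+1}$ is needed: everything with $\mu_{n+2}=0$ and $f \ne g$ lands in the third disjunct this way.

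Relatedly, your final paragraph misconstrues the lemma's logical shape: it is a disjunction (A) or (B) or (C). Once you have $\mu_{n+1}=\mu_{n+2}=0$ and $f=g$ you are done with (B); there is nothing further to establish about $f \in \bar K[p^2,p^3]$ in that branch. The only place (C) is needed is the case $f \ne g$, which is exactly where your argument currently breaks.
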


\begin{proof}
Assume that $h \in K[x,x_{n+1}]$ and that $g$ is reducible. Then $\mu_{n+2} = 0$
because otherwise $g$ would be a tame coordinate. Hence $g - f \in K[x_{n+1}]$. Since
the monomials of $g - f$ can only have degrees $1$ and $d$, we even have $g - f \in K[x_{n+1}]
\setminus K^{*}$. If $f = g$, then $\mu_{n+1} = 0$ because $\mu_{n+1} x_{n+1}$ is the difference
between the linear parts of $g$ and $f$.
If $f \ne g$, then by 1) $\Longrightarrow$ 3) of Lemma \ref{irred} (with $f - g$ instead of $g$),
$f \in \bar{K}[p^2,p^3]$ for some $p \in \bar{K}[x]$,
and 3) $\Longrightarrow$ 4) of Lemma \ref{irred} tells us $\jac f$ is not unimodular over $\bar{K}$
and hence neither over $K$.
\end{proof}

\begin{lemma} \label{mulem2}
Assume that $F \in K[x]^n$ is any polynomial map over $K$.
Let $\Lambda \in K[y]^n$ such that $y_1, \allowbreak y_2, \allowbreak
\ldots, y_n, \Lambda_1, \Lambda_2, \ldots, \Lambda_n, 1$ are linearly independent over $K$.
Then for all $H \in K[x,y]^n$, the map
$$
G := (F - \Lambda, y, z - H)
$$
has the property that for all $\mu \in K^{3n+1}$, either
$$
g := \mu_1 G_1 + \mu_2 G_2 + \cdots + \mu_{3n} G_{3n} + \mu_{3n+1}
$$
is irreducible, or $\mu_{2n+1} = \mu_{2n+2} = \cdots = \mu_{3n} = 0$ and for
$$
f := \mu_1 F_1 + \mu_2 F_2 + \cdots + \mu_n F_n + \mu_{3n+1},
$$
we have $f \in \bar{K}[p^2,p^3]$ for some $p \in \bar{K}[x]$, in which case $\jac f$ is not
unimodular.
\end{lemma}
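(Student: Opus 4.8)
The plan is to reduce to Lemma \ref{mulem} in spirit, handling the multidimensional shift $\Lambda$ and the vector $H$ exactly as the single-variable argument handles $\lambda x_{n+1}^d$ and $h$. So I would fix $H \in K[x,y]^n$ and $\mu \in K^{3n+1}$, write $g$ as in the statement, and assume $g$ is reducible; the goal is to show $\mu_{2n+1} = \cdots = \mu_{3n} = 0$ and that the resulting $f$ lies in $\bar K[p^2,p^3]$ for some $p$, whence $\jac f$ is not unimodular by 3) $\Longrightarrow$ 4) of Lemma \ref{irred}. First I would dispose of the $z$-block: if $(\mu_{2n+1},\ldots,\mu_{3n}) \ne 0$, then since $G_{2n+j} = z_j - H_j$ and the $z_j$ occur linearly and only in these components, $g$ would be of the form (affine-linear in $z$) $+$ (polynomial in $x,y$); such a polynomial is a tame coordinate (it is an elementary-type coordinate in the $z$-variables, cf. the argument in the proof of Proposition \ref{prop1} and in Lemma \ref{mulem}), hence irreducible, contradicting reducibility of $g$. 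So $\mu_{2n+1} = \cdots = \mu_{3n} = 0$.

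With the $z$-block gone, $g$ depends only on $x,y$, and $g = \sum_{i=1}^n \mu_i (F_i - \Lambda_i) + \sum_{j=1}^n \mu_{n+j} y_j + \mu_{3n+1} = f - q$, where $q := \sum_{i=1}^n \mu_i \Lambda_i - \sum_{j=1}^n \mu_{n+j} y_j \in K[y]$ and $f := \sum_{i=1}^n \mu_i F_i + \mu_{3n+1} \in K[x]$. The linear-independence hypothesis on $y_1,\ldots,y_n,\Lambda_1,\ldots,\Lambda_n,1$ is exactly what guarantees $q \notin K$: indeed if $q \in K$ then $\sum_i \mu_i \Lambda_i = \sum_j \mu_{n+j} y_j + (\text{const})$, which by linear independence forces all $\mu_i$ (for $i \le n$) and all $\mu_{n+j}$ to vanish, but then $g = \mu_{3n+1} \in K$ is not reducible. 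So $q \in K[y] \setminus K$, and $f - q$ is a polynomial in the disjoint variable sets $x$ and $y$ with no mixed terms, with the $y$-part $-q$ nonconstant. Now I would invoke 1) $\Longrightarrow$ 3) of Lemma \ref{irred}, applied with $f$ as is and with $g$ there taken to be $q$ (the roles match: $f - q$ is reducible, $q \in \bar K[y] \setminus \bar K$): this yields $f \in \bar K[p^2,p^3]$ for some $p \in \bar K[x]$. Finally 3) $\Longrightarrow$ 4) of Lemma \ref{irred} gives $\gcd\{\partial f/\partial x_1,\ldots,\partial f/\partial x_n\} \notin \bar K^*$, i.e. $\jac f$ is not unimodular over $\bar K$, hence not over $K$. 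This completes the proof.

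The one point that needs a little care — and which I expect to be the main (though minor) obstacle — is the tame-coordinate step for the $z$-block: one must check that $g$, when some $\mu_{2n+j} \ne 0$, really is a coordinate and not merely "linear in one $z_k$". After a $\GL$-change of the $z$-coordinates sending $\sum_j \mu_{n+j} \cdot(\text{something})$—more precisely sending the linear form $\sum_{j}\mu_{2n+j} z_j$ to $z_1$—the polynomial $g$ becomes $z_1 - H'(x,y) + (\text{stuff not involving }z)$ for a suitable $H'$, which is visibly the first component of the elementary automorphism $(z_1 + (\cdots), z_2,\ldots,z_n)$ composed with linear maps, hence a tame coordinate and in particular irreducible. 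This is the same mechanism already used in Lemma \ref{mulem} (there $\mu_{n+2} \ne 0$ makes $g$ a tame coordinate) and in Proposition \ref{prop1}, so I would simply cite that reasoning rather than repeat it. Everything else is bookkeeping: tracking which $\mu$'s survive and matching the hypotheses of Lemma \ref{irred} to the decomposition $g = f - q$.
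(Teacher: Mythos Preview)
Your proof is correct and follows essentially the same route as the paper's: eliminate the $z$-block via the tame-coordinate observation, reduce to $g = f - q$ with $q \in K[y]$, and invoke Lemma~\ref{irred} (1)$\Rightarrow$(3)$\Rightarrow$(4). The only cosmetic difference is that the paper splits off the degenerate case $f = g$ and verifies the conclusion there directly (getting $f = \mu_{3n+1}$, trivially in $\bar K[p^2,p^3]$ with non-unimodular Jacobian), whereas you dispose of $q \in K$ by noting it would force $g$ to be constant, contradicting reducibility; both treatments are valid.
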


\begin{proof}
Assume that $H \in K[x,y]^n$ and suppose that $g$ is reducible. Then $\mu_{2n+1} = \mu_{2n+2}
= \cdots = \mu_{3n} = 0$ because otherwise $g$ would be a tame coordinate.
Hence $g - f \in K[y]$. Since $1$ is linearly independent over $K$ of
$y_1, \allowbreak y_2, \allowbreak \ldots, y_n, \Lambda_1, \Lambda_2, \ldots, \allowbreak \Lambda_n$,
as opposed to $g - f$,
we even have $g - f \in K[y] \setminus K^{*}$. If $f = g$, then the linear independence over $K$
of $y_1, y_2, \ldots, y_n, \Lambda_1, \Lambda_2, \ldots, \Lambda_n$ tells us that $f = \mu_{3n+1}
\in \bar{K}[p^2,p^3]$ for any $p \in \bar{K}[x]$ and that $\jac f = (0^1~0^2~\cdots~0^n)$ is not
unimodular. The case $f \ne g$ follows in a similar manner as in the proof of Lemma \ref{mulem}.
\end{proof}

\begin{corollary} \label{mulem2col}
Assume that $F \in K[x]^n$ is a Keller map over $K$ and $d \ge 2$ be
an integer. Then for all $H \in K[x,y]^n$, the map
$$
G := (F - y^{*d}, y, z - H)
$$
has the property that
$$
\mu_1 G_1 + \mu_2 G_2 + \cdots + \mu_{3n} G_{3n} + \mu_{3n+1}
$$
is irreducible for all $\mu \in K^{3n+1}$ such that $\mu_i \ne 0$ for some $i \le 3n$.
\end{corollary}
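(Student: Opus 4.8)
The plan is to derive Corollary \ref{mulem2col} from Lemma \ref{mulem2} together with the Keller condition on $F$, in the same way Lemma \ref{mulem} yields its counterpart. I would apply Lemma \ref{mulem2} with $\Lambda := y^{*d}$ (so $\Lambda_j = y_j^d$) and with the given $H$. Its hypothesis is satisfied: since $d \ge 2$, the $2n+1$ polynomials $y_1,\ldots,y_n,y_1^d,\ldots,y_n^d,1$ are pairwise distinct monomials, hence linearly independent over $K$. So for every $\mu \in K^{3n+1}$ we obtain the dichotomy: either $g := \mu_1 G_1 + \cdots + \mu_{3n}G_{3n} + \mu_{3n+1}$ is irreducible, or $\mu_{2n+1} = \mu_{2n+2} = \cdots = \mu_{3n} = 0$ and $\jac f$ is not unimodular, where $f := \mu_1 F_1 + \cdots + \mu_n F_n + \mu_{3n+1}$.

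Now fix $\mu \in K^{3n+1}$ with $\mu_i \ne 0$ for some $i \le 3n$, and suppose towards a contradiction that $g$ is not irreducible. By the dichotomy, $\mu_{2n+1} = \cdots = \mu_{3n} = 0$ and $\jac f$ is not unimodular; consequently $\mu_j \ne 0$ for some $j$ with $1 \le j \le 2n$. I would split into two cases. If $\mu_j \ne 0$ for some $j \le n$, permute the first $n$ components of $F$ (and of $G$ accordingly) so that $j = 1$. Then $\jac(f, F_2, \ldots, F_n) = M\cdot \jac F$, where $M$ is the identity matrix with its first row replaced by $(\mu_1, \ldots, \mu_n)$, so $\det M = \mu_1 \ne 0$ and $(f, F_2, \ldots, F_n)$ is again a Keller map. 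Expanding $\det \jac(f, F_2, \ldots, F_n) \in K^{*}$ along the first row exhibits a $K[x]$-linear combination of $\parder{}{x_1}f, \ldots, \parder{}{x_n}f$ equal to a nonzero constant, so $\jac f$ is unimodular, contradicting the dichotomy. In the remaining case $\mu_1 = \cdots = \mu_n = 0$, hence $\mu_{n+j} \ne 0$ for some $j \le n$; since $G_{n+1}, \ldots, G_{2n}$ are just $y_1, \ldots, y_n$, the polynomial $g = \mu_{n+1}y_1 + \cdots + \mu_{2n}y_n + \mu_{3n+1}$ has degree exactly $1$ and is therefore irreducible, again a contradiction. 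Thus $g$ is irreducible, as claimed.

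I do not expect a genuinely hard step: the corollary is essentially a repackaging of Lemma \ref{mulem2}. The only point requiring care is that the two cases above are exhaustive, which hinges on the observation that Lemma \ref{mulem2} already forces $\mu_{2n+1} = \cdots = \mu_{3n} = 0$, so the surviving nonzero coordinate of $\mu$ must lie among $\mu_1, \ldots, \mu_{2n}$; and that the Keller condition on $F$ is invoked correctly through row expansion of the Jacobian determinant, exactly as in the proof of Theorem \ref{irredlc}.
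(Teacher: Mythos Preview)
Your proposal is correct and follows essentially the same approach as the paper: both apply Lemma \ref{mulem2} with $\Lambda = y^{*d}$, use the conclusion $\mu_{2n+1}=\cdots=\mu_{3n}=0$, eliminate the case $\mu_1=\cdots=\mu_n=0$ via the observation that $g$ would then be linear, and finish by noting that the Keller condition on $F$ forces $\jac f$ to be unimodular when some $\mu_j$ with $j\le n$ is nonzero. Your write-up is slightly more explicit (you verify the linear-independence hypothesis of Lemma \ref{mulem2} and spell out the row-expansion argument), but the substance is identical to the paper's proof.
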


\begin{proof}
Assume that $H \in K[x,y]^n$ and suppose that
$g := \mu_1 G_1 + \mu_2 G_2 + \cdots + \mu_{3n} G_{3n} + \mu_{3n+1}$ is
reducible and $\mu_i \ne 0$ for some $i \le 3n$. Since the linear parts of the components of
$G$ are linearly independent over $K$, we see that $g \ne 0$. Let
$f := \mu_1 F_1 + \mu_2 F_2 + \cdots + \mu_n F_n + \mu_{3n+1}$.
By Lemma \ref{mulem2}, we have $i \le 2n$.
Since $\deg g \ne 1$ by reducibility of $g$, we can even take $i \le n$.

Again by Lemma \ref{mulem2}, $\jac f$ is not unimodular, so
$(F_1, \ldots, F_{i-1}, f, F_{i+1}, \ldots, F_n)$ is not a Keller map.
This contradicts that $F$ is a Keller map, so $g$ is irreducible if $\mu_i \ne 0$ 
for some $i \le 3n$.
\end{proof}

\noindent
The proposition below can be used to show that $F_i - c$ is irreducible for all 
$c \in K$ in some additional cases. If we take for $v$ the first column of $T^{-1}$,
where $T$ is as in the proof of Proposition \ref{prop1}, then
$$
T_2 v = 0 \ne T_1 v \qquad \mbox{and} \qquad \jac h \cdot v = 0 \ne \jac f \cdot v
$$
where $h$ and $f$ are as in Proposition \ref{prop1}. 
So Proposition \ref{prop2} generalizes Proposition \ref{prop1} in some sense.
 
\begin{proposition} \label{prop2}
Assume that $f,h \in K[x]$ such that $\deg (f - h) = 1$.
If there exists a vector $v \in K^n$ such that 
$\jac h \cdot v = 0 \ne \jac f \cdot v$, then
$f$ is a tame coordinate.
\end{proposition}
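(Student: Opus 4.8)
The plan is to reduce to the situation handled in Proposition~\ref{prop1} by a linear change of coordinates that makes the vector $v$ standard and the displacement $h$ depend on few variables. First I would use the hypothesis $\jac h \cdot v = 0 \ne \jac f \cdot v$ together with $\deg(f-h)=1$: writing $f = h + l$ with $\deg l = 1$, the directional derivative along $v$ satisfies $\jac f \cdot v = \jac h \cdot v + \jac l \cdot v = \jac l \cdot v$, so $\jac l \cdot v \ne 0$, and since $l$ is affinely linear, $\jac l$ is a constant row vector $a\tp$ with $a\tp v \ne 0$; in particular $a \ne 0$, so $l$ is a genuine (nonconstant) linear form. Thus $f = h + l$ where $l$ is a nonconstant affinely linear polynomial whose linear part does not annihilate $v$.

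Next I would choose $T \in \GL_n(K)$ so that $T$ maps the standard basis vector $e_1$ to $v$ (this is possible since $v \ne 0$). Applying the substitution $x \mapsto Tx$, set $\tilde f := f(Tx)$, $\tilde h := h(Tx)$, $\tilde l := l(Tx)$. By the chain rule, $\parder{}{x_1}\tilde h = (\jac h)(Tx)\cdot v = 0$ identically, so $\tilde h \in K[x_2, x_3, \ldots, x_n]$, i.e.\ $\tilde h$ does not involve $x_1$. Meanwhile $\parder{}{x_1}\tilde l = (\jac l)\cdot v = a\tp v \in K^{*}$, so the coefficient of $x_1$ in the linear form $\tilde l$ is a nonzero constant. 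Hence $\tilde f = \tilde h + \tilde l$, and after subtracting the part of $\tilde l$ not involving $x_1$ (which we may absorb into $\tilde h$, keeping $\tilde h \in K[x_2,\ldots,x_n]$) and rescaling $x_1$ by the unit $a\tp v$, we arrive — up to an invertible affine change of coordinates — at a polynomial of the form $x_1 + p(x_2,\ldots,x_n)$ for some $p$.

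Finally I would observe that $x_1 + p(x_2, \ldots, x_n)$ is the first component of the elementary invertible polynomial map $(x_1 + p(x_2,\ldots,x_n), x_2, x_3, \ldots, x_n)$, which is tame with tame inverse $(x_1 - p(x_2,\ldots,x_n), x_2, \ldots, x_n)$. Composing this elementary tame map with the affine automorphisms used above (which are themselves tame) exhibits $f$ as the first component of a tame automorphism of $K^n$, i.e.\ $f$ is a tame coordinate. The only point requiring slight care — the ``main obstacle,'' such as it is — is bookkeeping the constant and lower-order affine adjustments: one must check that absorbing the $x_1$-free part of $\tilde l$ into $\tilde h$ and rescaling $x_1$ are realized by honest elements of $\GL_n(K)$ composed with a translation, so that the final map is a composition of tame maps and hence tame; this is routine, exactly as in the proof of Proposition~\ref{prop1}, of which the present statement is the promised generalization.
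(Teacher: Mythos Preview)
Your proof is correct and follows essentially the same approach as the paper: choose $T \in \GL_n(K)$ with $Te_1 = v$, use the chain rule to see that $h(Tx)$ is free of $x_1$ while the affine part $l(Tx)$ has nonzero $x_1$-coefficient, and conclude that $f(Tx)$ is (up to a scalar) an elementary coordinate. The only cosmetic difference is that the paper works directly with $f$ and $h$ rather than first isolating $l = f - h$, and it handles the scalar by writing $f$ as the first component of $cE(T^{-1}x)$ for the elementary map $E = (c^{-1}f(Tx), x_2,\ldots,x_n)$ rather than by rescaling $x_1$.
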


\begin{proof}
Since $v \ne 0$, there exists a $T \in \GL_n(K)$ such that $v = T e_1$, where $e_1$ is the first standard
basis unit vector and hence $T e_1$ is the first column of $T$. Consequently,
$$
\jac (h(Tx)) \cdot e_1 = (\jac h)|_{x = Tx} \cdot v = 0 \ne (\jac f)|_{x = Tx} \cdot v = \jac (f(Tx)) \cdot e_1.
$$
It follows that $h(Tx) \in K[x_2, x_3,\ldots,x_n]$ and $f(Tx) \notin K[x_2, x_3,\ldots,x_n]$.
Since $\deg (f(Tx) - h(Tx)) = 1$, we see that $f(Tx) - h(Tx) - c x_1 \in K[x_2, x_3,\ldots,x_n]$ for some
$c \in K^{*}$. Hence $f(Tx) - c x_1 \in K[x_2, x_3,\ldots,x_n]$ and
$E := (c^{-1}f(Tx), x_2, \allowbreak x_3, \allowbreak \ldots, x_n)$ is an elementary invertible polynomial
map. Since $f$ is the first component of $c E(T^{-1} x)$, we see that $f$ is a tame coordinate.
\end{proof}

\section{Irreducibility results for cubic Keller maps without quadratic parts}

\begin{theorem} \label{irredth}
Assume $F = x + H$ is a polynomial map over $K$,
such that $H \in K[x]^n$ is cubic homogeneous and $\jac H$ is nilpotent. Say that besides
linear combinations of $1$ only, there are exactly $s \ge 1$ linear combinations of
$F_1, F_2, \ldots, F_n, 1$ which are reducible, if we do not count scalar multiples.

Then $s \le n-4$ and there exists a $T \in \GL_n(K)$ such that the first $s$ components of
$T^{-1}F(Tx)$ are reducible. In particular, $s$ is finite and the first $s$ components of
$T^{-1}F(Tx)$ are the only linear combinations of the components of $T^{-1}F(Tx)$ and $1$,
which are reducible and not a linear combination of $1$ only.
\end{theorem}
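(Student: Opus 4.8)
The plan is to package the reducibility data into a linear-algebraic object and then exploit the structure of cubic homogeneous Keller maps with nilpotent Jacobian. Concretely, consider the $K$-vector space $V$ spanned by $F_1,\dots,F_n$ inside $K[x]$, together with the affine shifts by $K\cdot 1$. A reducible affinely linear combination $\mu_1F_1+\cdots+\mu_nF_n+\mu_{n+1}$ that is not in $K\cdot 1$ determines a nonzero class $[\mu]\in V$ (mod constants), well-defined up to scalar, since adding a constant changes only $\mu_{n+1}$ and we mod out by scalars. So the hypothesis says there are exactly $s$ such classes $[\mu^{(1)}],\dots,[\mu^{(s)}]\in V$. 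The first thing I would prove is that these $s$ classes are linearly independent over $K$: if some nontrivial relation held, I would combine the reducibility of the corresponding $f^{(k)}$'s with the earlier machinery—each reducible $f^{(k)}$ has $f^{(k)}\in\bar K[p_k^2,p_k^3]$ by 1)$\Rightarrow$3) of Lemma \ref{irred}, hence $\jac f^{(k)}$ is not unimodular by 3)$\Rightarrow$4), which via Lemma \ref{irred2} (applied to pairs, using that $F$ is Keller so the relevant gcd of $2\times2$ Jacobian minors is a unit) forces a relation that is incompatible with having two \emph{independent} reducible combinations unless they coincide projectively. Once independence is established, pick $T\in\GL_n(K)$ so that the first $s$ rows of $T^{-1}$ realize $\mu^{(1)},\dots,\mu^{(s)}$ (after absorbing the constant shifts); then $(T^{-1}F(Tx))_k$ is a scalar multiple of the $k$-th reducible combination plus a constant for $k\le s$, hence reducible, giving the ``first $s$ components'' claim. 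The ``in particular'' sentence is then immediate: the set of reducible non-constant linear combinations is $T$-equivariant, so after the coordinate change the reducible combinations are exactly scalar multiples (mod $1$) of the first $s$ components.

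The remaining and genuinely substantive part is the bound $s\le n-4$. Here I would use the specific structure: $F=x+H$ with $H$ cubic homogeneous and $\jac H$ nilpotent, so $\jac F=I+\jac H$ is unipotent, $\det\jac F=1$, and $F$ is a cubic Keller map without quadratic terms. For each reducible combination $f^{(k)}$, Corollary \ref{irredcor} applies: $f^{(k)}$ has terms of degrees $0,1,3$ only (indeed degrees $1$ and $3$, since $H$ is cubic homogeneous, so $f^{(k),(0)}=\mu^{(k)}_{n+1}$, and we are in the $d=3$ case), so from 1)$\Rightarrow$2)$\Rightarrow$3) we get $f^{(k),(0)}=0$, $f^{(k),(1)}\mid f^{(k)}$, and $(f^{(k),(1)})^2\mid f^{(k),(3)}$. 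In particular $f^{(k)}=\ell_k(\alpha_k\ell_k^2+1)$ for a linear form $\ell_k=f^{(k),(1)}$ and a quadratic form $\alpha_k$ (up to a unit). The key structural input I expect to invoke is that $\jac H$ is nilpotent: writing $f^{(k),(3)}=\mu^{(k)}\tp H$ and using $\ell_k^2\mid f^{(k),(3)}$, one gets that each $\ell_k$ divides the corresponding entry pattern of $\jac H$ in a way that, combined with nilpotency, constrains how many independent such $\ell_k$ can occur. I would make this precise by passing to the symmetric/gradient picture where possible, or more robustly by the following counting: after the coordinate change realizing $\ell_1,\dots,\ell_s$ as (multiples of) $x_1,\dots,x_s$, we have $x_k^2\mid H_k^{\mathrm{new}}$ for $k\le s$ (up to lower-index corrections), and the nilpotency of $\jac H^{\mathrm{new}}$ together with $\det\jac F=1$ is known to be incompatible with too many diagonal-divisibility constraints—this is exactly the kind of estimate underlying the existing cubic-homogeneous literature.

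The main obstacle I anticipate is precisely pinning down the constant $4$ in $s\le n-4$: getting $s\le n-1$ is easy (the $\ell_k$ are independent linear forms in $n$ variables, and a reducible combination cannot be a nonzero constant, but more to the point, the full $F$ is a coordinate system only conjecturally), and squeezing out three more requires a genuine obstruction to small-dimensional nilpotent cubic Keller maps having a component of the form $\ell(\alpha\ell^2+1)$. I would look for this obstruction among: (a) the classification in low dimension that cubic homogeneous Keller maps in dimension $\le 3$ (or $\le$ some small bound) are linearly triangularizable hence have no reducible components of this shape, forcing the ``independent reducible directions'' to live in a complement of dimension $\ge 4$; or (b) a direct argument that if $x_1,x_2,x_3,x_4$ were all such $\ell_k$'s, the divisibility $(f^{(k),(1)})^2\mid f^{(k),(3)}$ for all four, combined with $\jac H$ nilpotent and the rank conditions from $\det(I+\jac H)=1$, yields a contradiction via a rank/trace computation on $\jac H$ restricted to these coordinates. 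Once that low-dimensional obstruction is in hand, the bound $s\le n-4$ follows by applying it to the ``reducible part'' of the coordinate system and counting dimensions. The finiteness of $s$ and the ``only'' clause then come for free from the $T$-equivariance already noted.
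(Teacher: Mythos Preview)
Your outline has the right coarse shape but two of the load-bearing steps do not work as written.

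\medskip
\textbf{The linear independence step.} Your appeal to 1)$\Rightarrow$3) of Lemma~\ref{irred} is a misapplication: condition 1) there is ``$f-g$ is reducible'' with $g\in\bar K[y]\setminus\bar K$ living in \emph{fresh} variables, not ``$f$ itself is reducible''. A single reducible $f^{(k)}$ gives you nothing of the sort, and indeed if your chain worked it would show $\jac f^{(k)}$ is not unimodular---but it \emph{is} unimodular (row of a Keller Jacobian), so you would have proved $s=0$, contradicting the hypothesis $s\ge 1$. The paper avoids this entirely: it does not prove independence first. Instead it runs an extremal argument---conjugate so that a \emph{maximal} number $t$ of components are reducible, then shows $t=s$ at the very end. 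The key there is: if $t<s$, maximality forces every reducible combination into $\operatorname{span}_K(F_1,\ldots,F_t)$, so some $\mu_1F_1+\cdots+\mu_rF_r$ with $r\le t$ and $\mu_1\cdots\mu_r\ne 0$, $r\ge 2$, is reducible. Using Corollary~\ref{irredcor} one has $H_j=x_j^2 g_j$ for each $j\le r$; if some $\partial H_i/\partial x_k\ne 0$ with $i\le r<k$, a short computation with $\partial/\partial x_k$ shows $(\mu_1x_1+\cdots+\mu_rx_r)^2\nmid \mu_1H_1+\cdots+\mu_rH_r$, contradicting Corollary~\ref{irredcor}; otherwise $H_1,\ldots,H_r\in K[x_1,\ldots,x_r]$, the leading $r\times r$ block of $\jac H$ is nilpotent, and one feeds this smaller map back into the already-proved bound $t\le n-4$ to get $r\le r-4$. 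None of Lemmas~\ref{irred} or~\ref{irred2} enter here.

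\medskip
\textbf{The bound $s\le n-4$.} You correctly flag this as the hard part, and your suggestion (a) is pointed in the right direction, but you are missing the two concrete ingredients the paper actually uses. First, the base of the argument is Hubbers' theorem: the Jacobian Conjecture holds for cubic homogeneous Keller maps in dimension $n\le 4$, so in that range $F$ is invertible and every component is irreducible, forcing $s=0$. This is what produces the specific constant $4$. Second, for $n>4$ the paper does a case split on the shapes $H_j=x_j^2 g_j$ (from Corollary~\ref{irredcor}) and, in the generic subcase, restricts to the hyperplane $x_i=0$: nilpotency of $\jac H$ passes to the principal minor obtained by deleting row and column $i$, the remaining $F_j|_{x_i=0}$ with $j\le t$, $j\ne i$ stay reducible (their degrees do not drop), and one invokes \emph{induction on $n$}. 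The remaining subcase---where for every $i\le t$ some $H_j=\lambda_j x_i x_j^2$---is dispatched by a direct determinant computation on a cycle in the permutation $i\mapsto j$, which shows the corresponding principal block of $\jac H$ is not nilpotent. Your sketch has neither Hubbers' result, nor the induction on $n$, nor this case analysis; a rank/trace computation of the type you propose in (b) will not by itself pin down the constant $4$.
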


\begin{proof}
Notice that $F$ is a Keller map and therefore, $\jac F_i$ is unimodular for each $i$.
By 1) $\Longrightarrow$ 3) of Corollary \ref{irredcor}, all reducible linear combinations
of $F_1, F_2, \ldots, F_n, 1$ are already linear combinations of $F_1, F_2, \ldots, F_n$.
Replace $F$ by a linear conjugation of $F$ such that as many components of $F$ as possible
become reducible, say that exactly $t$ such components become reducible. Assume without
loss of generality that $F_1, F_2, \ldots, F_t$ are the reducible components of $F$.

It suffices to show that $t = s$ and 
\begin{equation} \label{tn4}
t \le n-4 
\end{equation}
We first show \eqref{tn4} by distinguishing $t > n - 4$ into three cases.
\begin{itemize}

\item \emph{$t > n - 4 \le 0$.} \\
Then $n \le 4$ and we have $s = t = 0$ on account of E. Hubbers' result that the JC
holds for $F$, see \cite{Hub94} or \cite[Cor.\@ 7.1.3]{MR1790619}. This contradicts $s \ge 1$.

\item \emph{$t > n-4 > 0$ and for each $i \le t$, there exists a $j \le t$ such that $H_j =
\lambda_j x_i x_j^2$ for some $\lambda_j \in K^{*}$.} \\
Notice that $j$ as above is unique for all $i \le t$, hence $i \mapsto j$ is a permutation
of $\{1,2,\ldots,t\}$, say with a cycle of length $k \le t$. Then we may assume without loss of generality
that
\begin{align*}
H_1 &= \lambda_1 x_k x_1^2,& H_2 &= \lambda_2 x_1 x_2^2,& H_3 &= \lambda_3 x_2 x_3^2,&
&\ldots,& H_k &= \lambda_k x_{k-1} x_k^2.
\end{align*}
The leading principal
minor determinant of size $k$ of $\jac H$ equals
$$
\big(2^k - (-1)^k\big) \lambda_1 x_1^2 \lambda_2 x_2^2 \cdots \lambda_k x_k^2,
$$
so the corresponding submatrix is not nilpotent.
But since $F_i \in K[x_1,x_2,\allowbreak \ldots,x_k]$ for all $i \le k$, the leading principal minor matrix of
size $k$ is nilpotent, because its $p$-th power is a submatrix of $(\jac H)^p$. We obtain a contradiction.

\item \emph{$t > n - 4 > 0$ and for some $i \le t$, there does not exist a $j \le t$ such that $H_j =
\lambda_j x_i x_j^2$ for some $\lambda_j \in K^{*}$.} \\
Notice that $(\jac H)|_{x_i=0}$ is nilpotent because $\jac H$ is nilpotent.
Since $F_i = x_i + H_i$ is reducible, it follows from 1) $\Longrightarrow$ 3) of Corollary
\ref{irredcor} that the $i$-th row
of $(\jac H)|_{x_i=0}$ is zero. Hence the principal minor matrix that we obtain from
$(\jac H)|_{x_i=0}$ by removing its $i$-th row and $i$-th column is nilpotent as well.
This minor matrix is equal to
$$
\jac_{x_1,\ldots,x_{i-1},x_{i+1},\ldots,x_n}
\big(H_1|_{x_i=0},\ldots,H_{i-1}|_{x_i=0},H_{i+1}|_{x_i=0},\ldots,H_n|_{x_i=0}\big).
$$
By 1) $\Longrightarrow$ 3) of Corollary \ref{irredcor} we see that $x_j^2 \mid H_j$
for each $j \le t$. But by assumption on $i$, we have $x_i x_j^2 \nmid H_j$ for each $j \le t$.
Hence by cubic homogeneity of
$H_j$, we have $H_j|_{x_i=0} \ne 0$ for each $j \le t$ except $j = i$. So $\deg F_j = \deg F_j|_{x_i = 0}$
and $\deg g = \deg g|_{x_i=0}$ for every $g \mid F_j$ and each $j \le t$ except $j = i$.

As a consequence, $F_1|_{x_i=0}, \ldots, F_{i-1}|_{x_i=0}, F_{i+1}|_{x_i=0}, \ldots, \allowbreak
F_t|_{x_i=0}$ are all reducible. 
Make $\hat{F}$ from $F|_{x_i=0}$ by removing the $i$-th
component and substituting $x_{i+1} = x_i, x_{i+2} = x_{i+1}, \ldots, x_n = x_{n-1}$, in that order.
Then $\hat{F}$ has $n-1$ components, of which the first $t-1$ are reducible. 
By induction on $n$, it follows that $(t-1) \le (n-1) - 4$, so $t \le n-4$.

\end{itemize}
So it remains to show that $t = s$. Suppose therefore that $t \ne s$. Then $t < s$ and
by maximality of $t$, we cannot get the first $t+1$ components of $F$ reducible by way of conjugation.
Hence all linear combinations of components of $F$
which are reducible are already linear combinations of $F_1, F_2, \ldots, F_t$.
Since $t < s$, there exists a linear combination of the form $\mu_1 F_1 + \mu_2 F_2 + \cdots +
\mu_t F_t$ which is reducible, such that $\mu_i \ne 0$ for at least two $i$'s.
Hence we may assume that there is a reducible linear combination
of the form $\mu_1 F_1 + \mu_2 F_2 + \cdots + \mu_r F_r$ with $\mu_1 \mu_2 \cdots \mu_r \ne 0$,
where $2 \le r \le t$.

Assume first that there exist $i \le r < k$ such that $\parder{}{x_k} H_i \ne 0$.
On account of 1) $\Longrightarrow$ 3) of Corollary \ref{irredcor}, for each $j \le r$,
we have $H_j = x_j^2 g_j$ for some linear
form $g_j$. Therefore,
\begin{equation} \label{kdif}
\parder{}{x_k} (\mu_1 H_1 + \mu_2 H_2 + \cdots + \mu_r H_r)
\end{equation}
is a nontrivial $K$-linear combination of $x_1^2, x_2^2, \ldots, x_r^2$.
Since the coefficient of $x_1 x_2$ in
$(\mu_1 x_1 + \mu_2 x_2 + \cdots + \mu_r x_r)^2$ is $2 \mu_1 \mu_2 \ne 0$,
$(\mu_1 x_1 + \mu_2 x_2 + \cdots + \mu_r x_r)^2$ does not divide \eqref{kdif},
and neither divides $\mu_1 H_1 + \mu_2 H_2 + \cdots + \mu_r H_r$.
Now 1) $\Longrightarrow$ 3) of Corollary \ref{irredcor} tells us that the
Jacobian of $\mu_1 F_1 + \mu_2 F_2 + \cdots + \mu_r F_r$ is not unimodular.
Hence $(\mu_1 F_1 + \mu_2 F_2 + \cdots + \mu_r F_r, F_2, F_3, \ldots, F_n)$ is not a Keller map
and neither is $F$. This contradicts that $\jac H$ is nilpotent.

Assume next that $\parder{}{x_k} H_i = 0$ for all $i \le r < k$.
Then $H_i \in K[x_1,x_2,\ldots,x_r]$ for all $i \le r$ and the leading principal
minor matrix of size $r$ of $\jac H$ is nilpotent because its $p$-th power is a submatrix of
$(\jac H)^p$. Hence the map $\tilde{F}$ which consists of the first $r$ components of 
$F$ satisfies the conditions on $F$ of this theorem with $n = r$. On account of
\eqref{tn4}, at most $r - 4$ components of $\tilde{F}$ can be reducible. 
This contradicts that all components of $\tilde{F}$ are reducible.
\end{proof}

\begin{corollary} \label{lambdacol}
Assume that $F \in K[x]^n$ is a cubic Keller map over $K$ without quadratic part. Then there exists a
$\lambda \in K^n$ such that for all $h \in K[x,x_{n+1}]$, the map
$$
G := (F - \lambda x_{n+1}^3, x_{n+1}, x_{n+2} - h)
$$
has the property that
$$
\mu_1 G_1 + \mu_2 G_2 + \cdots + \mu_{n+2} G_{n+2} + \mu_{n+3}
$$
is irreducible for all $\mu \in K^{n+3}$ such that $\mu_i \ne 0$ for some $i \le n+2$.
\end{corollary}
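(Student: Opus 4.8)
The plan is to apply Lemma \ref{mulem} with $d=3$, choosing for $\lambda$ a vector whose admissibility is guaranteed by Theorem \ref{irredth}. First I would put $F$ into the shape required by Theorem \ref{irredth}. Since $F$ is a cubic Keller map without quadratic terms, we may write $F = F^{(0)} + Ax + F^{(3)}$ with $A = \jac F|_{x=0}$, and $\det A = \det\jac F \in K^{*}$. Then $F' := A^{-1}(F - F^{(0)}) = x + H$ is again a Keller map, where $H := A^{-1}F^{(3)}$ is cubic homogeneous. As the entries of $\jac H$ are homogeneous of degree $2$, a comparison of homogeneous parts in $\det(I+s\,\jac H) = \sum_{k=0}^{n}\sigma_{k}\,s^{k}$, where $\sigma_{k}\in K[x]$ is homogeneous of degree $2k$, together with $\sum_{k}\sigma_{k} = \det(I+\jac H) = \det\jac F' \in K^{*}$, forces $\sigma_{k} = 0$ for all $k \ge 1$; hence $\jac H$ is nilpotent and Theorem \ref{irredth} applies to $F'$ (the case with no reducible nontrivial combination being vacuous). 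It yields that, up to scalars, only finitely many linear combinations of $F'_{1},\dots,F'_{n}$ and $1$ are reducible over $\bar{K}$. Because each $F'_{i}$ is, up to an additive constant, a fixed $K$-linear combination of $F_{1},\dots,F_{n}$, the set
$$
B := \bigl\{\, \nu \in \bar{K}^{n}\setminus\{0\} : \textstyle\sum_{i=1}^{n}\nu_{i}F_{i} + c \text{ is reducible over }\bar{K}\text{ for some }c\in\bar{K} \,\bigr\}
$$
is contained in a finite union of lines through the origin, say the lines spanned by $v_{1},\dots,v_{r} \in \bar{K}^{n}\setminus\{0\}$.

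Next I would choose $\lambda \in K^{n}$ with $\langle v_{k},\lambda\rangle \ne 0$ for every $k \le r$; this is possible because each such condition excludes only a proper $K$-subspace of $K^{n}$ and $K$ is infinite. Fix an arbitrary $h \in K[x,x_{n+1}]$, set $G := (F - \lambda x_{n+1}^{3},\, x_{n+1},\, x_{n+2}-h)$, and take $\mu \in K^{n+3}$ with $\mu_{i}\ne 0$ for some $i \le n+2$. Suppose for contradiction that $g := \mu_{1}G_{1} + \cdots + \mu_{n+2}G_{n+2} + \mu_{n+3}$ is reducible. By Lemma \ref{mulem} (with $d=3$), since $g$ is reducible, either (b) $\mu_{n+1}=\mu_{n+2}=0$ and $f := \mu_{1}F_{1}+\cdots+\mu_{n}F_{n}+\mu_{n+3}$ equals $g$, or (c) $\mu_{n+2}=0$ and $\jac f$ is not unimodular.

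In case (b), write $\nu := (\mu_{1},\dots,\mu_{n})$. Since $\mu_{n+1}=\mu_{n+2}=0$ while some $\mu_{i}\ne 0$ with $i \le n+2$, we have $\nu \ne 0$; moreover $g - f = -\langle\nu,\lambda\rangle x_{n+1}^{3}$, so the equality $g = f$ forces $\langle\nu,\lambda\rangle = 0$. But $f$ is reducible with $\nu\ne 0$, hence $\nu \in B$, so $\nu$ is a nonzero multiple of some $v_{k}$ and $\langle\nu,\lambda\rangle\ne 0$ — a contradiction. In case (c), put $\nu := (\mu_{1},\dots,\mu_{n})$ again. If $\nu\ne 0$, reindex so that $\mu_{1}\ne 0$; then $(f,F_{2},\dots,F_{n})$ is a Keller map, so the gcd of the entries of $\jac f$ divides $\det\jac(f,F_{2},\dots,F_{n}) \in K^{*}$, contradicting that $\jac f$ is not unimodular. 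Hence $\nu = 0$, and then $g = \mu_{n+1}x_{n+1}+\mu_{n+3}$; since some $\mu_{i}\ne 0$ with $i \le n+2$ and $\nu = 0 = \mu_{n+2}$, we get $\mu_{n+1}\ne 0$, so $g$ has degree $1$ and is irreducible — again contradicting that $g$ is reducible. Therefore $g$ is irreducible, which is the assertion.

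The step I expect to be the main obstacle is the first one: transferring Theorem \ref{irredth}, which is phrased for maps $x+H$ with $\jac H$ nilpotent, to an arbitrary cubic Keller map without quadratic terms, and checking that the finiteness of the set of reducible ``directions'' is unaffected by the affine conjugation $F \mapsto F'$ and by passing from $K$ to $\bar{K}$. Once the finiteness of $B$ is secured, the choice of $\lambda$ and the subsequent case analysis are routine consequences of Lemma \ref{mulem} and the Keller condition on $F$.
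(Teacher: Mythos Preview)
Your proof is correct and follows the same architecture as the paper's: use Theorem \ref{irredth} to identify the finitely many ``bad'' directions $\nu$ for which $\nu_1F_1+\cdots+\nu_nF_n+c$ can be reducible, pick $\lambda$ so that $\langle\nu,\lambda\rangle\ne0$ on each bad direction, and then run the trichotomy of Lemma \ref{mulem}. Your case analysis (b)/(c) matches the paper's dichotomy $f=g$ versus $f\ne g$ almost verbatim.

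Two differences are worth noting. First, you explicitly reduce the given cubic Keller map without quadratic terms to the normal form $x+H$ with $\jac H$ nilpotent before invoking Theorem \ref{irredth}; the paper simply writes ``Take $T$ as in Theorem \ref{irredth}'' and leaves this reduction implicit, so your version is more careful here. Second, the paper chooses $\lambda$ concretely as the sum of the columns of $T$ (so that the inner product of $\lambda$ with every row of $T^{-1}$ equals $1$), whereas you argue existence by avoiding finitely many hyperplanes; both work, but the explicit choice is cleaner.

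One minor point: your passage to $\bar K$ when defining $B$ is unnecessary for the corollary as stated (everything is over $K$), and it forces you to worry about whether Theorem \ref{irredth} controls reducibility over $\bar K$ and whether the $v_k$ lie in $K^n$. Since the theorem holds over any field of characteristic zero, you may indeed apply it over $\bar K$, and your hyperplane-avoidance argument for $\lambda\in K^n$ with $v_k\in\bar K^n$ is valid; but simply working over $K$ throughout avoids the detour and matches what is actually being claimed.
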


\begin{proof}
Take $T$ as in Theorem \ref{irredth} and let $\lambda$ be the sum of the columns
of $T$. Assume that
$g := \mu_1 G_1 + \mu_2 G_2 + \cdots + \mu_{n+2} G_{n+2} + \mu_{n+3}$ is
reducible for some $\mu \in K^{n+3}$ such that $\mu_i \ne 0$ for some $i \le n + 2$.
If $f := \mu_1 F_1 + \mu_2 F_2 + \cdots + \mu_n F_n + \mu_{n+3} = g$, then $f(Tx) = g(Tx)$
is reducible as well, and by Theorem \ref{irredth}, $(\mu_1~\mu_2~\cdots~\mu_n)$ is
$c$ times a row of $T^{-1}$ for some $c \in K^{*}$. By the choice of $\lambda$,
we have $g = f - (\mu_1~\mu_2~\cdots~\mu_n) \lambda x_{n+1}^3 = f - c x_{n+1}^3$ in this case,
which contradicts $f = g$.

So $f \ne g$ and by Lemma \ref{mulem}, $\jac f$ is not unimodular and $\mu_{n+2} = 0$.
Since $\mu_{n+1} G_{n+1} = \mu_{n+1} x_{n+1}$ is irreducible if $\mu_{n+1} \neq 0$, we can choose
$i \le n$. Hence $(F_1, \ldots, F_{i-1},\allowbreak f,F_{i+1},\ldots,F_n)$ is a Keller map
as well as $F$. This contradicts that $\jac f$ is not unimodular, so $g$ is irreducible.
\end{proof}

\section{Irreducibility results for symmetric Keller maps}

\begin{lemma} \label{symdiag}
Assume $F$ is a Keller map in dimension $n$ over $K$, such that
$\jac F$ is symmetric. If $F_i$ is of the form $c' x_i + x_i^2 h + c$
for some $c,c' \in K$ and some $h \in K[x]$, then $h = 0$.
\end{lemma}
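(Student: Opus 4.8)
The plan is to exploit symmetry of $\jac F$ together with the Keller condition to force $h$ to vanish. Suppose for contradiction that $h \neq 0$, and write $F_i = c' x_i + x_i^2 h + c$. First I would compute the $i$-th row of $\jac F$, which is $\grad_x F_i = \grad_x(c' x_i + x_i^2 h)$. Because $\jac F$ is symmetric, this row equals the $i$-th column of $\jac F$, i.e.\ for every $j$ we have $\parder{}{x_j} F_i = \parder{}{x_i} F_j$. Reading off the $j = i$ entry gives $\parder{}{x_i} F_i = c' + 2 x_i h + x_i^2 \parder{}{x_i} h$, and reading off $j \neq i$ gives $\parder{}{x_j} F_j$ restricted information: namely $\parder{}{x_i} F_j = \parder{}{x_j} F_i = x_i^2 \parder{}{x_j} h$ for each $j \neq i$.

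The next step is to integrate these relations. For each $j \neq i$, since $\parder{}{x_i} F_j$ is divisible by $x_i^2$, the polynomial $F_j$ has the form $F_j = x_i^3 q_j + r_j$ where $r_j \in K[x_1,\ldots,\hat{x_i},\ldots,x_n]$ does not involve $x_i$; more precisely $\parder{}{x_i} F_j = x_i^2 \parder{}{x_j} h$ forces $h$ to be such that $\parder{}{x_j}h$, integrated once in $x_i$ after multiplying by $x_i^2$, is consistent across all $j$. I expect the cleanest route is to observe that $\jac F$ being symmetric means $F = \grad_x f$ for some $f \in K[x]$ (by Poincar\'e's lemma as cited in the introduction), so $F_i = \parder{}{x_i} f$. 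Then $\parder{}{x_i} f = c' x_i + x_i^2 h + c$, and integrating in $x_i$ gives $f = \tfrac12 c' x_i^2 + c x_i + \int x_i^2 h\, dx_i + p$ with $p$ free of $x_i$. The key structural fact to extract is that the Hessian $\hess f = \jac F$ has $i$-th diagonal entry $\parder{}{x_i}\parder{}{x_i} f = c' + 2 x_i h + x_i^2 \parder{}{x_i} h$, which is divisible by... well, it need not be; but the \emph{off-diagonal} entries $\parder{}{x_i}\parder{}{x_j} f = x_i^2 \parder{}{x_j} h$ are all divisible by $x_i^2$.

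Now I would apply the Keller condition: $\det \hess f \in K^*$. Reduce the whole matrix modulo $x_i$ (substitute $x_i = 0$): every off-diagonal entry in row $i$ and column $i$ becomes $0$, the $(i,i)$ entry becomes $c'$, so $\det (\hess f)|_{x_i = 0} = c' \cdot \det M$, where $M$ is the Hessian of $f|_{x_i=0}$ in the remaining variables. For this to be a nonzero constant we need $c' \neq 0$ and $\det M \in K^*$. But more usefully, I would differentiate $\det \hess f$ with respect to $x_i$, or better, examine the dependence on $x_i$ directly: since all entries of row $i$ except the diagonal are divisible by $x_i^2$, expansion along row $i$ shows $\det \hess f \equiv (c' + 2x_i h + x_i^2 \partial_{x_i}h)\cdot \det M' \pmod{x_i^4}$ roughly, where $M'$ is the complementary minor. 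The main obstacle is controlling these minors precisely; I expect the intended argument is shorter and avoids a full determinant expansion, instead using an earlier result (Corollary~\ref{symdiagcol} or a direct consequence of Corollary~\ref{irredcor}) that a unimodular gradient with $x_i^2 \mid F_i - c'x_i - c$ cannot have $\parder{}{x_i}(\text{linear part}) \neq 0$ unless the $x_i^2 h$ term vanishes. Concretely: $\grad F_i$ unimodular (from expanding $\det \jac F$ along row $i$) combined with $F_i = c'x_i + x_i^2 h + c$ having its linear part $c' x_i$ dividing $F_i - c$ and $(c'x_i)^2 \mid x_i^2 h$ only if $c' \mid$ everything — and then feeding this into the symmetric structure forces $h = 0$. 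I would complete the proof by showing that if $h \neq 0$ then $\grad F_i$ is contained in the ideal $(x_i)$ after the substitution $x_i = 0$ kills the relevant entries, contradicting unimodularity; the sign and scalar bookkeeping is routine once the divisibility pattern is set up.
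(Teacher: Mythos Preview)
Your setup is sound: you correctly extract from symmetry that the off-diagonal entries in row and column $i$ of $\jac F = \hess f$ are divisible by $x_i^2$, and you correctly deduce $c' \neq 0$ from the Keller condition at $x_i = 0$. But the proof does not close.

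There are two concrete problems. First, the suggestion to invoke Corollary~\ref{symdiagcol} is circular: that corollary is proved \emph{using} Lemma~\ref{symdiag}. Second, your final sentence (``$\grad F_i$ is contained in the ideal $(x_i)$ after the substitution $x_i = 0$'') is simply false: the $i$-th entry of $\grad F_i$ at $x_i = 0$ is $c' \neq 0$, so no contradiction to unimodularity arises that way.

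The real gap is the one you flag yourself: you cannot control the complementary minor $M_{ii}$ in its dependence on $x_i$, because you have said nothing about the entries $(\hess f)_{jk}$ with $j,k$ \emph{both} different from $i$. Your mod-$x_i^4$ observation gives $\det \jac F \equiv (c' + 2x_i h + x_i^2\partial_{x_i}h)\,M_{ii} \pmod{x_i^4}$, but without knowing the $x_i$-expansion of $M_{ii}$ this yields no contradiction. The paper's proof supplies exactly this missing control: write $h = x_i^{r-2}\tilde{h}$ with $x_i \nmid \tilde{h}$, so $F_i = c'x_i + x_i^r\tilde{h} + c$; integrating, $f$ has no terms of $x_i$-degree $3,\ldots,r$, whence \emph{every} entry of $\hess f$ except the $(i,i)$-entry has no terms of $x_i$-degree $1,\ldots,r-1$. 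Then, choosing a monomial $t$ of $\tilde{h}|_{x_i=0}$ minimal under divisibility, the only term of any entry of $\hess f$ that is divisible by $x_i$ and divides $x_i^{r-1}t$ is the term $rc''x_i^{r-1}t$ sitting in the $(i,i)$-entry. This forces the coefficient of $x_i^{r-1}t$ in $\det \hess f$ to equal $rc''(c')^{-1}$ times the constant term of $\det \hess f$, contradicting $\det \hess f \in K^*$. The step you are missing is precisely this passage from ``off-diagonal row/column $i$ divisible by $x_i^2$'' to ``\emph{all} entries outside $(i,i)$ avoid $x_i$-degrees $1$ through $r-1$'', together with the choice of a specific monomial to track.
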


\begin{proof}
Suppose that  $F_i$ is of the form $c' x_i + x_i^2 h + c$. Then the constant part
of $\jac F_i$ is of the form
$$
(0^1 ~ \cdots ~ 0^{i-1} ~ c' ~ 0^{i+1} ~ \cdots ~ 0^n).
$$
By expansion along the $i$-th row of $\jac F$, we see that
$p = \det \jac F$ can be expressed as $p = p_0 + (\jac F)_{ii} p_1$, where
both $p_0$ and $p_1$ are polynomials in the entries of $\jac F$ except $(\jac F)_{ii}$, in
such a way that the constant part with respect to $x$ of $p_0$ is zero. We will use this
to prove that $h = 0$.

So let us assume that $h \ne 0$, say that $h = x_i^{r-2} \tilde{h}$, where $x_i \nmid \tilde{h}$.
Then $F_i = c' x_i + x_i^r \tilde{h} + c$. From Poincar\'e's lemma, it follows that
$F = \grad f$ for some $f \in K[x]$, so
$\parder{}{x_i} f = F_i$. If $\tilde{h}$ has no monomials that are divisible by $x_i$, then
\begin{align*}
f &= f|_{x_i = 0} \cdot 1 + c \cdot x_i + \tfrac12 c' \cdot x_i^2
+ \tfrac1{r+1} \tilde{h} \cdot x_i^{r+1}. \\
\intertext{In the general case, we can turn out monomials of $\tilde{h}$
that are divisible by $x_i$ by reducing $\tilde{h} \cdot x_i^{r+1}$ modulo $x_i^{r+2}$, and we have}
f \bmod{x_i^{r+2}} &= f|_{x_i = 0} \cdot 1 + c \cdot x_i + \tfrac12 c' \cdot x_i^2
+ \tfrac1{r+1} \tilde{h}|_{x_i=0} \cdot x_i^{r+1}.
\end{align*}
where $\tilde{h}|_{x_i=0} \ne 0$.
Since $r \ge 2$ by definition, it follows that $(\jac F)_{ii} = \parder[2]{}{x_i} f$ is the only 
entry of the matrix $\jac F = \hess f$ with monomials of degree between $1$ and $r-1$ inclusive in $x_i$,
and those monomials add up to $r \tilde{h}|_{x_i=0} \cdot x_i^{r-1} \ne 0$. Hence $p_0$ and $p_1$ do
not have monomials of degree between $1$ and $r-1$ inclusive in $x_i$ either. Furthermore, there
exists a term $t$ of degree between $1$ and $r-1$ inclusive in $x_i$ whose coefficient
in $(\jac F)_{ii}$ is nonzero. We can choose $t$ of minimum degree, 
so that $t$ is not divisible by any other such term of $(\jac F)_{ii}$.

Since the coefficient of $1$ in $p_0$ is zero, the coefficient of $1$ in $p = \det \jac F$, which is 
nonzero because of the Keller condition on $F$, is equal to the coefficient of $1$ in 
$(\jac F)_{ii} \cdot p_1|_{x=0}$. The coefficient of $t$ in $(\jac F)_{ii} \cdot p_1|_{x=0}$
is nonzero as well, because $p_1|_{x=0} \in K^{*}$ along with the coefficient of $1$ in 
$(\jac F)_{ii} \cdot p_1|_{x=0}$. 

So if we can show that the coefficient of $t$ in 
$p = \det \jac F$ is equal to that in $(\jac F)_{ii} \cdot p_1|_{x=0}$, then we 
have a contradiction with the Keller condition on $F$, which gives us the conclusion that $h = 0$. 
Indeed, since $p_0$ do not have monomials of degree between $1$ and $r-1$ inclusive, the
coefficient of $t$ in $p$ is equal to that in $(\jac F)_{ii} \cdot p_1$. Now, $p_1$ does
not have such monomials either, therefore our choice of $t$ tells us that coefficient of 
$t$ in $p$ is equal to that in $(\jac F)_{ii} \cdot p_1|_{x=0}$. 
\end{proof}

\noindent
From now on in this section, we shall write $G_i^{(k)}$ for the homogeneous part of degree $k$ of a 
polynomial $G_i$ (which is the $i$-th component of a polynomial map $G$), or zero if $G_i$ has no such part.
We did this already for $k = 1$ in Theorem \ref{bakext}. Furthermore, we did something similar for
the polynomials $f$ and $g$ in section \ref{irlemu}.

\begin{corollary} \label{symdiagcol}
Assume $G$ is a Keller map in dimension $n$ over $K$, such that $\jac G$ is symmetric.
Suppose that $\parder{}{x_i} G_i^{(1)} \ne 0$.

If $G_i$ is of the form $G_i^{(1)} + \big(G_i^{(1)}\big)^2 h + c$
for some $c \in K$ and $h \in K[x]$, then $h = 0$ and hence $\deg G_i = 1$.
\end{corollary}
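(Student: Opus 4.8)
The plan is to reduce Corollary \ref{symdiagcol} to Lemma \ref{symdiag} by a linear change of coordinates that straightens out the linear form $G_i^{(1)}$ into a scalar multiple of $x_i$. Concretely, since $\parder{}{x_i} G_i^{(1)} \ne 0$, the coefficient of $x_i$ in $G_i^{(1)}$ is nonzero, so there is a $T \in \GL_n(K)$ whose $i$-th row is the coefficient vector of $G_i^{(1)}$ and whose other rows are standard basis vectors $e_j$ ($j \ne i$); then $G_i^{(1)}(T^{-1}x) = x_i$. Replacing $G$ by $\tilde{G} := (T^{-1})\tp G(T^{-1}x)$ preserves the Keller condition and, crucially, the symmetry of the Jacobian: by the chain rule $\jac_x \tilde{G} = (T^{-1})\tp (\jac G)|_{x = T^{-1}x} (T^{-1})$, which is symmetric whenever $\jac G$ is. Under this conjugation the $i$-th component transforms as $\tilde{G}_i = \sum_j (T^{-1})_{ji} G_j(T^{-1}x)$, which is not simply $G_i(T^{-1}x)$, so a little care is needed; the cleanest fix is to note that we only need the statement about $G_i$, and to argue directly that after the substitution $x \mapsto T^{-1}x$ the hypothesis $G_i = G_i^{(1)} + (G_i^{(1)})^2 h + c$ becomes $G_i(T^{-1}x) = x_i + x_i^2\, h(T^{-1}x) + c$, while the relevant row of the (still symmetric, still unimodular) Jacobian is controlled.

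The subtle point above suggests a more robust route: rather than conjugate the whole map, pick $T \in \GL_n(K)$ with $i$-th row equal to the coefficient vector of $G_i^{(1)}$ and set $\tilde{G} := T\tp G(T^{-1}x)$ — wait, one must check which side gives both symmetry and the right $i$-th component. I would take $\tilde G := (T\tp)^{-1} G(T^{-1} x)$ so that $\jac \tilde G = (T\tp)^{-1} (\jac G)(T^{-1}) $ is symmetric, and simultaneously arrange $T$ so that $(T\tp)^{-1}$ has $i$-th row $e_i$ (equivalently $T^{-1}$ has $i$-th column $e_i$, equivalently the $i$-th row of $T$ is $e_i\tp$) — but that conflicts with wanting the $i$-th row of $T$ to encode $G_i^{(1)}$. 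The resolution: since only the coefficient of $x_i$ in $G_i^{(1)}$ matters for "$\parder{}{x_i}$", write $G_i^{(1)} = a x_i + \ell$ with $a \in K^*$ and $\ell \in K[x_j : j \ne i]$, and use the substitution $x_i \mapsto a^{-1}(x_i - \ell)$, $x_j \mapsto x_j$ for $j\ne i$ — this is conjugation by a triangular $T \in \GL_n(K)$ fixing all $x_j$, $j \ne i$. One checks that for such $T$, the conjugate $\tilde G := M G(T^{-1}x)$ with $M = (T\tp)^{-1}$ has symmetric Jacobian, has $\det \jac \tilde G \in K^*$, and — because $M$ differs from the identity only in row/column $i$ in a triangular way — its $i$-th component $\tilde G_i$ equals (a constant multiple of) $G_i(T^{-1}x) = x_i + (a x_i)^2\, h(T^{-1}x)/a^2 \cdot a^2$, i.e.\@ of the form $c'' x_i + x_i^2 \tilde h + c''$ for some $c'' \in K$ and $\tilde h \in K[x]$. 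Then Lemma \ref{symdiag} applies to $\tilde G$ and gives $\tilde h = 0$, hence $h = 0$, hence $\deg G_i = 1$.

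So the key steps, in order, are: (1) normalize $G_i^{(1)}$ to a scalar multiple of $x_i$ by an explicit triangular $T \in \GL_n(K)$; (2) verify that the appropriate one-sided conjugation $\tilde G = (T\tp)^{-1} G(T^{-1}x)$ sends symmetric Keller maps to symmetric Keller maps (chain rule plus the identity $\jac_x\big((T\tp)^{-1} G(T^{-1}x)\big) = (T\tp)^{-1}(\jac G)|_{x=T^{-1}x}(T^{-1})$); (3) check that the $i$-th component of $\tilde G$ retains the shape $c' x_i + x_i^2 \tilde h + c$ required by Lemma \ref{symdiag} — this uses triangularity of $T$ so that the linear combination defining $\tilde G_i$ does not mix in other components; (4) invoke Lemma \ref{symdiag} to conclude $\tilde h = 0$, and transport back to get $h = 0$ and $\deg G_i = 1$.

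I expect the main obstacle to be step (3): keeping straight which side the matrix acts on so that \emph{both} the Jacobian stays symmetric \emph{and} the $i$-th component stays in the required triangular form, since a generic conjugation would mix $G_i$ with the other $G_j$. Triangularity of the coordinate change (only $x_i$ is replaced, by an affine function of the remaining variables) is what makes this work: the matrix $(T\tp)^{-1}$ is then triangular with $i$-th row a nonzero multiple of $e_i$ plus possibly entries in columns $j \ne i$ — actually one wants precisely that the $i$-th row of $(T\tp)^{-1}$ is $c'' e_i\tp$, which holds because $T$ fixes $x_j$ for $j \ne i$ means the $i$-th column of $T$ is $a\, e_i$, so the $i$-th row of $T^{-1}$ is $a^{-1} e_i\tp$, so the $i$-th row of $(T\tp)^{-1} = (T^{-1})\tp$ is the $i$-th column of $T^{-1}$, and a short computation pins this down. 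Everything else is bookkeeping that I would not grind through here.
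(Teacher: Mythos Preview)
Your approach is the paper's approach, but you talk yourself out of it halfway through and then rediscover it. The very first $T$ you write down --- identity except that row $i$ is the coefficient vector of $G_i^{(1)}$ --- already does the job, and your ``triangular'' fix is literally this same $T$. The worry about mixing in step~(3) dissolves once you notice that if $T$ differs from $I_n$ only in row $i$, then so does $T^{-1}$, hence $(T^{-1})\tp$ differs from $I_n$ only in \emph{column} $i$; in particular its $i$-th row is $(T^{-1})_{ii}\,e_i\tp$, so
\[
\tilde G_i \;=\; \sum_j (T^{-1})_{ji}\, G_j(T^{-1}x) \;=\; (T^{-1})_{ii}\, G_i(T^{-1}x),
\]
with no contribution from the other $G_j$. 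Since $G_i^{(1)} = (Tx)_i$, you get $G_i(T^{-1}x) = x_i + x_i^2\, h(T^{-1}x) + c$, and Lemma~\ref{symdiag} applies directly.

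One simplification the paper makes that you miss: rather than verify symmetry of $\jac \tilde G$ by the chain rule, invoke Poincar\'e's lemma once to write $G = \grad g$; then $(T^{-1})\tp G(T^{-1}x) = \grad_x\big(g(T^{-1}x)\big)$ is a gradient, so its Jacobian is a Hessian and symmetry is automatic. This also makes it transparent why the left factor must be $(T^{-1})\tp$ and not some other matrix.
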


\begin{proof}
Take $T \in \GL_n(K)$ such that $T$ corresponds to the identity matrix $I_n$ except
for the $i$-th row, for which we take $\big(\parder{}{x_1} G_i^{(1)} ~ \parder{}{x_2} G_i^{(1)}
~ \cdots ~ \parder{}{x_n} G_i^{(1)}\big)$. From Poincar\'e's lemma, it follows that
$G = \grad g$ for some $g \in K[x]$. Next, define $f := g(T^{-1}x)$ and
$F := (T^{-1})\tp G(T^{-1}x)$. Since $(T^{-1})\tp$ corresponds to the identity matrix $I_n$ except
for the $i$-th column, we have $F_i = ((T^{-1})\tp)_i G(T^{-1}x) = (T^{-1})_{ii} G_i(T^{-1}x)$.

By definition of $T$, the linear part of $G_i$ is equal to $(Tx)_i$, thus the linear part of
$G_i(T^{-1}x)$ is equal to $G_i^{(1)}(T^{-1}x) = x_i$.
Hence $F_i = (T^{-1})_{ii} G_i(T^{-1}x) = (T^{-1})_{ii} \big(x_i + x_i^2 h(T^{-1}x) + c\big)$.
Furthermore,
$$
\grad f = (\jac f)\tp = \big(\jac g(T^{-1}x)\big)\tp = \big(G\tp|_{x=T^{-1}x}\cdot T^{-1}\big)\tp =
(T^{-1})\tp G(T^{-1}x) = F,
$$
so $\jac F$ is symmetric, and we have $h(T^{-1}x) = 0$ on account of Lemma \ref{symdiag}.
This gives the desired result.
\end{proof}

\begin{theorem} \label{symth}
Assume $G$ is a Keller map in dimension $n$ over $K$, such that $\jac G$ is symmetric.
Suppose that $\parder{}{x_i} G_i^{(1)} \ne 0$ and $G_i^{(1)} \mid G_i^{(k)}$ for all $k \in
\{1,2,\ldots,\allowbreak d-1\}$.

If $G_i - c$ has a divisor of degree less than $d$ with trivial constant part
for some $c \in K$, then $\deg G_i = 1$.
\end{theorem}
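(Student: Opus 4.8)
The plan is to reduce the statement to the case already settled in Corollary~\ref{symdiagcol}, i.e.\ to show that $G_i$ has the shape $G_i^{(1)}+\big(G_i^{(1)}\big)^2 h+c$. The divisor hypothesis, fed into Lemma~\ref{x_1|flem}, will produce the factor $G_i^{(1)}$ in $G_i-c$; unimodularity of $\jac G_i$ will then upgrade this to the \emph{square} $\big(G_i^{(1)}\big)^2$. As a first step I would record that $\jac G_i$ is unimodular: since $G$ is a Keller map, expanding $\det\jac G$ along the $i$-th row exhibits the nonzero constant $\det\jac G$ as a $K[x]$-linear combination of the entries of $\jac G_i$, so these entries generate the unit ideal of $K[x]$; in particular $G_i^{(1)}\neq 0$ (which is anyway forced by $\parder{}{x_i}G_i^{(1)}\neq 0$).

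Next I would exploit the divisor. Let $p$ be a divisor of $G_i-c$ of degree less than $d$ with $p(0)=0$, and write $G_i-c=pq$; necessarily $p\neq 0$ and $\deg p\ge 1$. Comparing constant parts gives $c=G_i^{(0)}$, hence $(G_i-c)^{(0)}=0$ and $(G_i-c)^{(k)}=G_i^{(k)}$ for $k\ge 1$; comparing degree-$1$ parts gives $G_i^{(1)}=p^{(1)}q^{(0)}$, so $q(0)=q^{(0)}\neq 0$. Since $\deg p\le d-1$, the hypothesis $G_i^{(1)}\mid G_i^{(k)}$ for $1\le k\le d-1$ yields $G_i^{(1)}\mid (G_i-c)^{(k)}$ for $0\le k\le\deg p$ (the degree-$0$ part being $0$). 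Lemma~\ref{x_1|flem}, applied with $f=G_i-c$, $g=p$, $h=q$, $g^{*}=G_i^{(1)}$, then gives $G_i^{(1)}\mid p$, hence $G_i^{(1)}\mid G_i-c$ in $K[x]$. Writing $G_i-c=G_i^{(1)}Q$ and comparing degree-$1$ parts once more gives $Q(0)=1$.

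The crux is to show $G_i^{(1)}\mid Q-1$. Here I would use unimodularity of $\jac G_i$. By the product rule, the $j$-th entry of $\jac G_i=\jac\big(G_i^{(1)}Q\big)$ equals $\big(\parder{}{x_j}G_i^{(1)}\big)Q+G_i^{(1)}\parder{}{x_j}Q$, hence is congruent to $\big(\parder{}{x_j}G_i^{(1)}\big)Q$ modulo the ideal $(G_i^{(1)})$. Reducing modulo $(G_i^{(1)})$ an identity expressing $1$ as a $K[x]$-combination of the entries of $\jac G_i$ therefore shows that the image of $Q$ divides $1$ in $K[x]/(G_i^{(1)})$. Since $\parder{}{x_i}G_i^{(1)}\neq 0$, the linear form $G_i^{(1)}$ is nonconstant, so $K[x]/(G_i^{(1)})$ is a polynomial ring over $K$, whose units are exactly $K^{*}$; thus $Q\equiv\gamma\pmod{G_i^{(1)}}$ for some $\gamma\in K^{*}$, and $\gamma=Q(0)=1$ because $G_i^{(1)}(0)=0$. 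Hence $Q=1+G_i^{(1)}h$ with $h\in K[x]$, so $G_i=c+G_i^{(1)}+\big(G_i^{(1)}\big)^2 h$, and Corollary~\ref{symdiagcol} gives $h=0$ and $\deg G_i=1$.

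The step I expect to be the main obstacle is this last passage $G_i^{(1)}\mid Q-1$: it is where the symmetric-Keller structure is genuinely used (via unimodularity of $\jac G_i$ together with the fact that $K[x]/(G_i^{(1)})$ is a polynomial ring), and it is also where one must be careful to put $G_i-c$ into exactly the normal form $G_i^{(1)}+\big(G_i^{(1)}\big)^2 h$ required to invoke Corollary~\ref{symdiagcol}. Everything else — the constant-part comparisons, the single application of Lemma~\ref{x_1|flem}, and the final appeal to Corollary~\ref{symdiagcol} — should be routine bookkeeping. One could instead phrase the crux geometrically over $\bar K$ (unimodularity of $\jac G_i$ forces $Q$ to be nowhere zero on the hyperplane $V(G_i^{(1)})$, hence constant there with value $Q(0)=1$, so $G_i^{(1)}\mid Q-1$), but working in $K[x]/(G_i^{(1)})$ keeps the whole argument over $K$.
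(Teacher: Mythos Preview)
Your proof is correct and follows essentially the same route as the paper: apply Lemma~\ref{x_1|flem} to extract the factor $G_i^{(1)}$ from $G_i-c$, then use the Keller condition to upgrade this to the form $G_i^{(1)}+\big(G_i^{(1)}\big)^2 h+c$ required by Corollary~\ref{symdiagcol}. The only cosmetic difference is in the crux step: the paper phrases it geometrically (``$G_i$ is nonsingular over $\bar K$, which gives that $G_i$ is of the form of Corollary~\ref{symdiagcol}''), whereas you compute algebraically in $K[x]/(G_i^{(1)})$ --- exactly the equivalence you point out in your last paragraph.
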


\begin{proof}
Suppose that $g \mid G_i - c$, $c \in K$, $\deg g < d$ and $g(0) = 0$. Say that $G_i - c = g h$.
Since $g(0) = 0$, we have $h(0) \mid G_i^{(1)} \ne 0$. From Lemma
\ref{x_1|flem} with $g^{*} = G_i^{(1)}$, we obtain that $G_i^{(1)} \mid g \mid G_i - c$.
Hence we may assume that $g = G_i^{(1)}$.

By the Keller condition, $G_i$ is nonsingular over $\bar{K}$, which gives
that $G_i$ is of the form of Corollary \ref{symdiagcol} above. On account of that corollary,
$\deg G_i = 1$.
\end{proof}

\begin{lemma} \label{irredAy}
Let $f$ be of the form $g_0 + g_1 y_1 + g_2 y_2 + \cdots + g_n y_n \neq g_0$,
where $g_i \in A$ for all $i$ for some unique factorization domain $A$. Then $f$ is irreducible,
if and only if $\gcd\{g_0,g_1,g_2,\ldots,g_n\} \in A^{*}$.
\end{lemma}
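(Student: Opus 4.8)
The plan is to prove both directions of the equivalence, treating $f = g_0 + g_1 y_1 + \cdots + g_n y_n$ as a polynomial of degree $1$ in the variables $y_1, \ldots, y_n$ over the coefficient ring $A$. The forward direction is the easy one: if $\gcd\{g_0, g_1, \ldots, g_n\} = \delta \notin A^{*}$, then $\delta$ is a nonunit, non-associate-to-$f$ divisor (it cannot be associate to $f$ since $f$ involves the $y_i$ and $\delta \in A$), hence $f$ is reducible. For the converse, suppose $\gcd\{g_0, g_1, \ldots, g_n\} \in A^{*}$ and $f = pq$ is a nontrivial factorization; I would derive a contradiction by showing one of $p, q$ lies in $A$ and then forces a common factor among the $g_i$'s.

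The key step is a degree argument in the $y$-variables. Since $f$ has total degree $\le 1$ in $y_1, \ldots, y_n$, in any factorization $f = pq$ the degrees in $y$ must add, so without loss of generality $p \in A[x]$-part has $y$-degree $0$, i.e.\ $p \in A$ (here I am implicitly using that $A$ is the coefficient domain and $x$ does not occur beyond what is packaged into the $g_i$; more carefully, writing everything over $A[y_1, \ldots, y_n]$, one of the two factors has no $y_i$ and hence lies in $A$). Then $p$ divides every coefficient $g_i$ of $f$ regarded as a polynomial in $y$, so $p \mid \gcd\{g_0, g_1, \ldots, g_n\} \in A^{*}$, whence $p \in A^{*}$ and the factorization is trivial. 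This contradiction shows $f$ is irreducible. One must also note $f \notin A^{*}$ and $f \ne 0$, which is guaranteed by the hypothesis $f \ne g_0$ (so some $g_i \ne 0$ for $i \ge 1$, making $f$ a genuine nonconstant, hence a nonzero nonunit to which irreducibility applies).

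The main obstacle — really the only subtlety — is making the "one factor lies in $A$'' step rigorous: $A$ is merely a UFD, not a field, so $A[y_1, \ldots, y_n]$ is a UFD and the total $y$-degree is additive under multiplication, which is exactly what is needed; but one should be careful that $A$ itself may be something like $\bar{K}[x]$ or $\bar{K}[x_2/x_1, \ldots]$ as in the applications, so "$g_i \in A$'' already absorbs all the $x$-dependence and no further bookkeeping in $x$ is required. I would phrase the argument purely in terms of the abstract UFD $A$ and the polynomial ring $A[y_1, \ldots, y_n]$, invoking additivity of degree and the fact that a divisor of $f$ of $y$-degree $0$ divides all the $y$-coefficients $g_0, g_1, \ldots, g_n$ of $f$. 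This keeps the proof short and avoids any case analysis.
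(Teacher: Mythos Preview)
Your argument is correct and matches the paper's approach exactly: the paper's proof simply states that in any factorization of $f$ one factor must be constant with respect to $y$, and declares the remainder an easy exercise. You have filled in precisely that exercise, including the forward direction and the observation that $f \ne g_0$ ensures $f$ is a nonzero nonunit.
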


\begin{proof}
If $f$ decomposes in two factors, then one of the factors is constant with respect
to $y$. Now the conclusion follows easily.
\end{proof}

\begin{lemma} \label{symm}
Assume that $F \in K[x]^n$ is a polynomial map over $K$ and
$f \in K[x]$. Set $G := \grad_{x,y} (f + y\tp F)$. Then $\det \jac_{x,y} G = (-1)^n (\det \jac F)^2$
and $F$ is invertible, if and only if $G$ is invertible. Furthermore,
\begin{equation} \label{irredG}
\mu_1 G_1 + \mu_2 G_2 + \cdots + \mu_n G_n + \cdots + \mu_{2n} G_{2n} + \mu_{2n+1}
\end{equation}
is irreducible for all $\mu \in K^{2n+1}$ such that $\mu_i \ne 0$ for some $i \le n$ if $F$ is a
Keller map.
\end{lemma}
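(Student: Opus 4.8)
The plan is to begin by making $G$ completely explicit. Writing $y\tp F = \sum_{i=1}^n y_i F_i$, differentiation gives $G_j = \parder{}{x_j} f + \sum_{i=1}^n y_i \parder{}{x_j} F_i$ for $j \le n$ and $G_{n+k} = F_k$ for $k \le n$, so that
$$
\jac_{x,y} G = \hess_{x,y}(f + y\tp F) = \left(\begin{array}{cc} \hess_x f + \sum_i y_i \hess_x F_i & (\jac_x F)\tp \\ \jac_x F & 0 \end{array}\right).
$$
Swapping the two block rows (a product of $n$ row transpositions, so the determinant is multiplied by $(-1)^n$) turns this into a block lower triangular matrix with diagonal blocks $\jac F$ and $(\jac F)\tp$, whence $\det\jac_{x,y} G = (-1)^n(\det\jac F)^2$. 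This part is routine bookkeeping.

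For the invertibility equivalence I would exhibit the factorization $G = \Theta\circ\Phi$, where $\Phi(x,y) = \big(x,\ (\jac_x F)\tp y + \grad_x f\big)$ and $\Theta(u,v) = (v, F(u))$; checking this against the component formulas above is immediate. Now $\Theta$ is the coordinate swap $(u,v)\mapsto(v,u)$ followed by $(v,u)\mapsto(v,F(u))$, hence a polynomial automorphism exactly when $F$ is invertible; and $\Phi$ is a polynomial automorphism exactly when $\det\jac F \in K^{*}$, since solving $\Phi(x,y)=(u,v)$ amounts to $y = ((\jac_x F)\tp)^{-1}(v - \grad_x f)|_{x=u}$, whose entries are polynomial precisely when $(\jac F)^{-1}$ has polynomial entries. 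If $F$ is invertible then $\det\jac F \in K^{*}$ (standard), so $\Theta$ and $\Phi$, hence $G$, are automorphisms; conversely, if $G$ is invertible then $\det\jac F \in K^{*}$ by the determinant formula, so $\Phi$ is an automorphism and $\Theta = G\circ\Phi^{-1}$ is an automorphism, and reading off the appropriate blocks of $\Theta^{-1}$ yields a two-sided polynomial inverse of $F$, i.e.\ $F$ is invertible.

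For the irreducibility statement, set $\mu' = (\mu_1,\ldots,\mu_n)\tp$ and $\mu'' = (\mu_{n+1},\ldots,\mu_{2n})\tp$. From the component formulas, \eqref{irredG} equals $g_0 + g_1 y_1 + \cdots + g_n y_n$ with $g_0 = \mu'\tp\grad_x f + \mu''\tp F + \mu_{2n+1} \in K[x]$ and $(g_1,\ldots,g_n)\tp = \jac_x F\cdot\mu' \in K[x]^n$. By Lemma \ref{irredAy}, applied over the UFD $K[x]$, it suffices to show $\gcd\{g_1,\ldots,g_n\}\in K^{*}$ (which also forces the combination to be genuinely nonconstant in $y$, so that Lemma \ref{irredAy} applies). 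Here is the \emph{key step}: since $\mu'\ne 0$, pick $T\in\GL_n(K)$ whose first column is $\mu'$; then $(g_1,\ldots,g_n)\tp$ evaluated at $x = Tu$ equals $\parder{}{u_1}\big(F(Tu)\big)$, and $u\mapsto F(Tu)$ is again a Keller map because its Jacobian determinant is $\det\jac F\cdot\det T \in K^{*}$. Expanding that Jacobian determinant along its first column writes the nonzero constant $\det\jac_u(F(Tu))$ as a $K[u]$-linear combination of $\parder{}{u_1}(F(Tu))_1,\ldots,\parder{}{u_1}(F(Tu))_n$, so the gcd of the latter divides a unit and is therefore a unit; as $x\mapsto Tu$ is a $K$-algebra automorphism of the polynomial ring, $\gcd\{g_1,\ldots,g_n\}\in K^{*}$ as well, and Lemma \ref{irredAy} concludes. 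I expect this last reduction---recognizing the $y$-coefficient vector as the first-column partial derivative of a Keller map and invoking the cofactor expansion---to be the main point; the determinant identity and the invertibility factorization are comparatively mechanical.
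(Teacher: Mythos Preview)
Your proof is correct and matches the paper's approach for the determinant formula and the invertibility equivalence (the paper writes the same substitution $G\big(x,((\jac F)\tp)^{-1}(y-\grad f)\big)=(y,F)$ that underlies your factorization $G=\Theta\circ\Phi$). For the irreducibility claim, both arguments identify the $y$-coefficient vector of \eqref{irredG} as $(g_1,\ldots,g_n)=(\mu_1,\ldots,\mu_n)(\jac F)\tp$ and then invoke Lemma~\ref{irredAy}, but you work harder than necessary to show this vector has trivial gcd: the paper simply observes that it is a nontrivial $K$-linear combination of the rows of $(\jac F)\tp$, and since $F$ is Keller this matrix is invertible over $K[x]$, so the combination is automatically unimodular (multiply on the right by $((\jac F)\tp)^{-1}$ to recover the nonzero constant vector $(\mu_1,\ldots,\mu_n)$). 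Your change-of-variables trick with $T$ and the cofactor expansion along the first column is correct and pleasant, but it proves only $\gcd\{g_1,\ldots,g_n\}\in K^{*}$ rather than full unimodularity; this weaker statement is of course all that Lemma~\ref{irredAy} needs, so nothing is lost, but the paper's one-liner is shorter.
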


\begin{proof}
Notice that $\jac_{x,y} G$ is of the form
$$
\jac_{x,y} G = \hess_{x,y} (f + y\tp F) = \left( \begin{array}{cc} * & (\jac F)\tp \\
                           (\jac F) & 0 \end{array} \right),
$$
whence $\det \jac_{x,y} G = (-1)^n (\det \jac F)^2$. Consequently, both $F$ and $G$ are
Keller maps if one of them is, which we assume from now on.
Notice that $G = \big((\jac F)\tp y + \grad f, F\big)$ by definition. Hence we have
$$
G\Big(x,\big((\jac F)\tp\big)^{-1}(y-\grad f)\Big) = (y,F),
$$
and we see that that $G$ is invertible, if and only if $F$ is invertible.

Suppose that $\mu \in K^{2n+1}$ such that $\mu_i \ne 0$
for some $i \le n$. Then \eqref{irredG} is of the form $g_0 + g_1 y_1 + g_2 y_2 +
\cdots + g_n y_n$ with $g_i \in K[x]$ for all $i$. More precisely,
$$
(g_1 ~ g_2 ~ \cdots ~ g_n) = \jac_y \Big((\mu_1 ~ \mu_2 ~ \cdots ~ \mu_n) \big((\jac F)\tp y + \grad f\big)\Big)
$$
is a nontrivial linear combination of the rows of $(\jac F)\tp$, so $(g_1 ~ g_2 ~ \cdots ~ g_n)$ is
unimodular. In particular, $\gcd\{g_0,g_1,g_2,\ldots,g_n\} \in K^{*}$. Hence \eqref{irredG} is
irreducible on account of Lemma \ref{irredAy}.
\end{proof}

\noindent
Acknowledgment: The authors are very grateful to the referees who give some valuable advices. 

\bibliographystyle{weakjc}
\nocite{MR0592226E} %\cite{MR0592226E}
\bibliography{weakjc}

\end{document}